\documentclass[dvipsnames]{amsart}
\usepackage{pict2e}
\usepackage{amsmath,amssymb,amsthm}
\usepackage{extarrows}
\usepackage{fullpage}
\usepackage{todonotes}
 
\allowdisplaybreaks
\usepackage{soul}

\usepackage{color}
\usepackage{hyperref}
\hypersetup{
	colorlinks=true,
	linkcolor=blue,
	citecolor=blue,
	urlcolor=blue
}

\usepackage{graphicx}
\usepackage{mathrsfs}
\usepackage{tikz-cd}
\usepackage{array}
\usepackage{multirow}
\usepackage{soul}
\usepackage{quiver}

\usepackage{xcolor}

\usepackage{tikz}

\newtheorem{theoremAlph}{Theorem}

\newtheorem{theorem}{Theorem}[section]
\newtheorem{lemma}[theorem]{Lemma}	
\newtheorem{proposition}[theorem]{Proposition}
\newtheorem{corollary}[theorem]{Corollary}
\theoremstyle{definition}
\newtheorem{definition}[theorem]{Definition} 
\newtheorem{remark}[theorem]{Remark}

\newcommand{\longdownarrow}{\lower 1.4ex\hbox{\begin{picture}(18,18)(0,0)
\thicklines
\put(0,18){\vector(0,-1){18}}
\end{picture}}}
\newcommand{\longsearrow}{\lower 1.4ex\hbox{\begin{picture}(18,18)(0,0)
\thicklines
\put(0,18){\vector(1,-1){18}}
\end{picture}}}
\newcommand{\longssearrow}{\lower 1.4ex\hbox{\begin{picture}(18,18)(0,0)
\thicklines
\put(0,18){\vector(1,-2){18}}
\end{picture}}}
\newcommand{\longeearrow}{\lower 1.4ex\hbox{\begin{picture}(0,0)(9,9)
\thicklines
\put(0,18){\vector(1,0){18}}
\end{picture}}}
 \usepackage{relsize}
\theoremstyle{definition}

\numberwithin{equation}{section}
\newcommand{\C}{\mathcal{C}}

\title[Spectral Complexes and Currents on the Engel Group]{Spectral Complexes and Currents on the Engel Group} 
\keywords{Pansu pullback, Carnot groups,  Rumin complex, spectral sequences}
 
\subjclass{46L87, 53C17, 22E25, 30L99, 18G40}
\author[F.~Lo Biundo]{Filippa Lo Biundo}
\author[F.~Tripaldi]{Francesca Tripaldi}

\address[F. Lo Biundo]%
{Department of Pure Mathematics, University of Leeds, Woodhouse, LS2 9JT Leeds, UK} 
\email{f.lobiundo@leeds.ac.uk}

\address[F. Tripaldi]%
{Department of Pure Mathematics, University of Leeds, Woodhouse, LS2 9JT Leeds, UK} 
\email{f.tripaldi@leeds.ac.uk}

\date{}

\begin{document}
\maketitle

\begin{abstract}
We study the spectral complexes arising from the de Rham complex on the Engel group $G$ viewed as a truncated multicomplex. We describe the two resulting complexes, analyse their compactly supported counterparts, and define the corresponding spectral currents by duality. We prove that the smooth spectral complexes embed naturally into the current complexes. Finally, we show that the Pansu pullback of a $W_{\mathrm{loc}}^{1,q}$ map $\varphi\colon G\to G$ with $q>7$ induces a morphism into the corresponding complexes of spectral currents.
\end{abstract}

\section{Introduction}
The de Rham complex admits a natural extension from smooth differential
forms to currents. A current is a continuous linear functional on the
space of compactly supported smooth forms, endowed with its usual LF
topology. The exterior differential then extends by transposition, and every
smooth form defines a current by integration. After reversing degrees in
the usual way, integration gives an injective morphism from the
de Rham complex to the corresponding complex of currents. 

On a Carnot group, the stratification of the Lie algebra provides
additional structure that is not visible from the ordinary grading by
form degree. Differential forms decompose according to homogeneous
weight, and the exterior differential splits into components that raise
the weight by different amounts. In this way, the de Rham complex becomes
a truncated multicomplex \cite{lerario2023multicomplexes}, and its
associated spectral sequence separates the different homogeneous orders
of the exterior differential. The first page can be identified with the
space of Rumin forms, while the differentials on the successive pages are
related to the homogeneous components of the Rumin differential. We refer
to \cite{rumin_grenoble,F+T1} for constructions of the Rumin complex on
homogeneous nilpotent Lie groups.

The spectral complexes introduced in
\cite{tripaldi2026spectralcomplexestruncatedmulticomplexes} reorganise
these page differentials into a family of complexes. If
$Z_r^{p,q}$ and $B_r^{p,q}$ denote the cycles and boundaries associated
with the $r^{th}$ page of the spectral sequence of a truncated
multicomplex, the terms of a spectral complex are mixed quotients
\[
E_{r,l}^{p,q}:=\frac{Z_r^{p,q}}{B_l^{p,q}}\,,
\]
where the two indices need not coincide. The differentials connect
successive mixed quotients according to the homogeneous orders occurring
in the Rumin differential. Thus, rather than retaining a single page of
the spectral sequence, a spectral complex records in one cochain complex
the different homogeneous contributions that are relevant in consecutive
degrees.

Rumin currents on Carnot groups can likewise be defined by duality
against compactly supported Rumin forms, with the boundary operator
obtained by transposing the Rumin differential \cite{FSSC6,Can21jga,JuliaPansu2023} (see also
\cite{kleiner2021rumin} for the corresponding
distributional Rumin complex on Heisenberg groups). For spectral
complexes, however, the passage to distributions is not automatic. Their
terms are mixed quotients rather than merely spaces of sections of fixed
homogeneous subbundles. One must therefore specify the topology of the
compactly supported spectral test spaces, account for the auxiliary forms
entering the definitions of the cycles and boundaries, and verify that
integration descends to the complementary quotients.

The aim of the present paper is to develop a distributional theory for
these spectral complexes in a concrete higher-step model. We
work on the Engel group $G$, the connected, simply connected Carnot group
whose Lie algebra has an adapted basis $X_1,X_2,X_3,X_4$ satisfying
\[
[X_1,X_2]=X_3\ ,
\
[X_1,X_3]=X_4\,.
\]
Its topological and homogeneous dimensions are respectively $n=4$ and
$Q=7$. The Engel group has nilpotency step $3$, and the Rumin differential
on $1$-forms contains non-zero homogeneous components that increase the
weight by $2$ and $3$, hence producing two distinct spectral
complexes. We compute all their terms and differentials explicitly in
terms of the left-invariant frame and the homogeneous components of the
Rumin complex.

A central point in passing to currents is the treatment of compactly
supported smooth forms. Membership in $Z_r^{p,q}$ or $B_r^{p,q}$ is defined through a
system of equations involving auxiliary forms. When the construction is
performed in the compactly supported de Rham complex, all these
forms must also be compactly supported. The resulting spaces cannot in
general be obtained by first constructing $Z_r^{p,q}$ and $B_r^{p,q}$ in
the unrestricted smooth complex and then taking their set-theoretic
intersection with compactly supported forms.

For the Engel group, we make this distinction explicit. In one of the
relevant bidegrees, unrestricted smooth coefficients satisfy
\[
B_3^{6,-3}(G)=Z_2^{6,-3}(G)\,,
\]
whereas in another bidegree, the compact-support condition gives
the exact description
\[
Z_3^{1,0}(G)\cap\Omega_c^1(G)
=
\left\{
X_1f\,\theta_1+X_2f\,\theta_2\mid
f\in\C_c^\infty(G)
\right\}
=d_1\bigl(\C_c^\infty(G)\bigr)\,.
\]
These identities are essential for understanding the complementary
pairings used in the current construction.

We next introduce spaces of compactly supported spectral test forms. For
every admissible choice of indices, we set
\[
\mathscr D_{r,l}^{p,k-p}(G)
:=
\frac{Z_r^{p,k-p}(G)\cap\Omega_c^k(G)}
{B_l^{p,k-p}(G)\cap\Omega_c^k(G)},
\]
where the numerator and denominator are constructed entirely inside the
compactly supported de Rham complex. On each fixed compact set, the
relevant cycle space is a closed subspace of a Fr\'echet space of smooth
forms. We then endow $Z_r^{p,k-p}$ with the locally convex topology
induced by the usual LF topology on $\Omega_c^k(G)$ and give
$\mathscr D_{r,l}^{p,k-p}(G)$ the corresponding quotient topology. The
quotient need not be Hausdorff, however its continuous dual is
canonically the annihilator of the boundary space and depends only on its
closure.

Motivated by the complementary degree and weight in the integration
pairing, we define the space of $(r,l)$-spectral currents of bidegree
$(p,k-p)$ by
\[
\mathscr C_{r,l}^{p,k-p}(G)
:=
\left(
\mathscr D_{l,r}^{Q-p,\,n-k-(Q-p)}(G)
\right)'\,.
\]
The spectral differentials on test forms are continuous and hence induce boundary operators on spectral currents. In this way,
the two spectral complexes of the Engel group give rise to two complexes
of spectral currents.

We also identify all the spaces appearing in these current complexes.
They have the expected distributional form: smooth coefficients are
replaced by distributions, while the differential constraints and
quotient relations defining the spectral spaces persist in the
distributional sense. For example, one of the current spaces is naturally
identified with
\[
\left\{
T_1\theta_1+T_2\theta_2\mid
(X_2X_1-X_3)T_2=X_2^2T_1
\right\},
\]
where $T_1,T_2\in\mathcal D'(G)$ and the equation is understood
distributionally. Another term is a quotient of distributional
$3$-forms by the subspace determined by the relation
$X_2T_1=X_1T_2$.

The main result of the paper is the spectral counterpart of the classical
embedding of smooth forms into currents.

\begin{theoremAlph}\label{thm: introduction spectral embedding}
For every admissible choice of $p,k,r,l$, the integration pairing
\[
E_{r,l}^{p,k-p}(G)
\times
\mathscr D_{l,r}^{Q-p,\,n-k-(Q-p)}(G)
\longrightarrow\mathbb R,
\qquad
([\alpha],[\eta])
\longmapsto
\int_G\alpha\wedge\eta,
\]
is well defined and non-degenerate. Moreover, the map
\[
\psi_{r,l}^{p,k-p}\colon
E_{r,l}^{p,k-p}(G)
\longrightarrow
\mathscr C_{r,l}^{p,k-p}(G),
\qquad
\psi_{r,l}^{p,k-p}([\alpha])([\eta])
:=
\int_G\alpha\wedge\eta,
\]
is a well-defined injective linear map. These maps intertwine the spectral
differentials with the transposed boundary operators and therefore define
embeddings of the two spectral complexes of the Engel group into the
corresponding complexes of spectral currents.
\end{theoremAlph}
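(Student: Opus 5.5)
The plan is to reduce everything to the explicit descriptions of the spaces $E_{r,l}^{p,k-p}(G)$ and $\mathscr D_{l,r}^{Q-p,\,n-k-(Q-p)}(G)$ computed earlier, together with two structural facts about the truncated multicomplex. First, the wedge product pairs weight $p$ with weight $Q-p$, so only the top-weight component $\theta_1\wedge\theta_2\wedge\theta_3\wedge\theta_4$ of $\alpha\wedge\eta$ survives. Second, integration by parts, $\int_G d(\beta\wedge\gamma)=0$ whenever one factor is compactly supported, splits by weight into $\int_G d_i\beta\wedge\gamma=\pm\int_G\beta\wedge d_i\gamma$ for each homogeneous component $d_i$ of $d$.

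\textbf{Well-definedness.} Take $\alpha\in Z_r^{p,k-p}(G)$ and $\eta\in Z_l^{Q-p,\cdot}(G)\cap\Omega_c$. If $\alpha$ is replaced by $\alpha+\beta$ with $\beta\in B_l^{p,k-p}(G)$, then $\beta=d_0\gamma_0+\dots+d_{j}\gamma_{j}$, up to the auxiliary zig-zag forms, with $j<l$. Each term pairs against $\eta$ to give $\pm\int\gamma_i\wedge d_i\eta$ plus corrections from the auxiliary forms of $\eta$. Because $\eta\in Z_l$, its zig-zag equations cancel these corrections telescopically, and the total vanishes. The symmetric argument, with $\eta$ modified by $B_r\cap\Omega_c$ and $\alpha\in Z_r$, uses that the auxiliary forms of $\eta$ are compactly supported, so every boundary term vanishes. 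This is exactly why the test spaces must be built inside $\Omega_c$ rather than by intersection. In the Engel case both $r,l\in\{1,2,3\}$ and the zig-zags have length at most two, so I would check these identities case by case in the explicit bases. For example, the relation $Z_3^{1,0}\cap\Omega_c^1=d_1(\mathcal C_c^\infty)$ and $B_3^{6,-3}=Z_2^{6,-3}$ enter precisely where a naive argument fails.

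\textbf{Non-degeneracy and injectivity.} Suppose $\int\alpha\wedge\eta=0$ for all test $\eta$. Then $\alpha$ must lie in $B_l$. I would use the explicit identification of each $E_{r,l}$ with a space of smooth coefficient tuples subject to differential constraints, modulo a differential image. For instance, in bidegree $(1,0)$ with $E$ spanned by $f_1\theta_1+f_2\theta_2$ satisfying $(X_2X_1-X_3)f_2=X_2^2f_1$, the dual test space is a quotient of compactly supported $3$-forms. Testing against arbitrary compactly supported coefficients in the unconstrained directions forces the corresponding coefficients of $\alpha$ to vanish, by the fundamental lemma of the calculus of variations. In the remaining directions, the annihilator condition says that a distribution killing a differential image $D(\mathcal C_c^\infty)$ satisfies ${}^tD T=0$, and I then identify the solutions with $B_l$. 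This last identification is the main obstacle: it needs a Poincaré-type lemma for the relevant operators on smooth (not compactly supported) coefficients, namely that $\ker{}^tD$ equals the image defining $B_l$. I would obtain it from exactness of the spectral sequence pages on $G$, which is contractible, together with the explicit formulas. Where the pairing involves a non-closed boundary space, I would use that the dual only sees the closure. Injectivity of $\psi_{r,l}^{p,k-p}$ is then immediate, and continuity of $\psi([\alpha])$ follows because on each compact set the pairing is continuous in the Fréchet topology.

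\textbf{Chain map.} For the spectral differential $\delta\colon E_{r,l}\to E_{r',l'}$, acting as some $d_i$ composed with the zig-zag corrections, and its test-form counterpart, the same weight-split Stokes identity gives
\[
\psi(\delta[\alpha])([\eta])=\pm\int\alpha\wedge\delta\eta=\pm\,\psi([\alpha])(\delta[\eta])=({}^t\delta\,\psi[\alpha])([\eta]),
\]
with signs fixed by the degree-reversal convention used to define the boundary operators on spectral currents. The corrections again cancel by the cycle conditions.
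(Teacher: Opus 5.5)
Your plan is essentially the paper's proof (Lemmas~\ref{lemma: pairing spectral currents} and~\ref{lemma: continuity spectral embedding}). Weight counting plus Stokes, with compactly supported witnesses on the test side, gives well-definedness and the intertwining identity in general; the identities $Z_3^{1,0}\cap\Omega_c^1(G)=d_1(\mathcal C_c^\infty(G))$ and $B_3^{6,-3}=Z_2^{6,-3}$ are not needed there. Non-degeneracy is cut-off testing except for $E_{1,3}^{6,-3}$, where annihilating $d_1(\mathcal C_c^\infty(G))$ forces $X_2h_1=X_1h_2$.

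The ``main obstacle'' you flag is exactly Lemma~\ref{lemma: B esplicito nel caso liscio in grado 3}, which you already quote and which the paper proves directly by integrating along $X_2=\partial_s$ in global coordinates. Convergence of the spectral sequence to $H^3_{dR}(G)=0$ would by itself only give $Z_2^{6,-3}=B_4^{6,-3}$. Your spectral-sequence route would therefore also need $E_3^{3,-1}=0$, i.e.\ surjectivity of $d_c^2$ onto $E_0^2\cap\Omega^{3,-1}(G)$.
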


The paper is organised as follows. In Section~\ref{section 1}, we realise
the de Rham complex of the Engel group as a truncated $3$-multicomplex,
recall its relation with the Rumin complex, and construct the two
associated spectral complexes. We then compute their terms explicitly,
both for unrestricted smooth forms and under the compact-support
convention required for duality. In the final section, we introduce the
locally convex spaces of compactly supported spectral test forms, define
spectral currents and their boundary operators, describe the two current
complexes in terms of distributions, and prove
Theorem~\ref{thm: introduction spectral embedding}.

\section{Spectral complexes on the Engel group}\label{section 1}

The properties presented in the following subsection are meant to apply to the specific case of the Engel group, even though the same (or analogous) statements are known to hold true in the more general case when $G$ is taken to be an arbitrary Carnot group \cite{lerario2023multicomplexes}.
\subsection{The Engel group as a truncated multicomplex}

The specific group that we are going to focus on throughout this paper is the so-called Engel group. This is the connected, simply-connected, 4-dimensional filiform nilpotent Lie group $G$ whose Lie algebra $\mathfrak{g}$ has only two non-zero brackets 
\begin{align*}
    [X_1,X_2]=X_3\ ,\ [X_1,X_3]=X_4\,.
\end{align*}
Its Lie algebra $\mathfrak{g}$ admits several non-equivalent homogeneous structures \cite{hakavuori2022gradings}, one of which is the stratification
\begin{align}\label{eq: stratification}
    \mathfrak{g}=V_1\oplus V_2\oplus V_3\ \text{ with }V_1=\operatorname{span}_\mathbb{R}\{X_1,X_2\}\ ,\ V_2=\operatorname{span}_{\mathbb{R}}\{X_3\}\text{ and }V_3=\operatorname{span}_{\mathbb{R}}\{X_4\}\,.
\end{align}

Using the stratification \eqref{eq: stratification}, we can equip the Engel group with a left-invariant subRiemannian structure by fixing a scalar product on $V_1$, hence obtaining a Carnot group.

Equivalently, the existence of the stratification \eqref{eq: stratification} can be expressed by the fact that $\mathfrak{g}$ admits a family of Lie algebra automorphisms $\delta_\lambda$ with $\lambda>0$ of the form $\delta_\lambda=\operatorname{Exp}(A\ln\lambda)$, where $(X_1,X_2,X_3,X_4)$ is a basis of eigenvectors of $A$. In this basis,  $\delta_\lambda$ is represented by the diagonal matrix $\operatorname{Mat}(\delta_\lambda)=\operatorname{diag}(\lambda,\lambda,\lambda^2,\lambda^3)$, so that
$$\delta_\lambda X_1=\lambda X_1\ ,\
\delta_\lambda X_2=\lambda X_2\ ,\
\delta_\lambda X_3=\lambda^2X_3\ ,\
\delta_\lambda X_4=\lambda^3X_4\,.$$
The automorphisms $\delta_\lambda\colon\mathfrak{g}\to\mathfrak{g}$ are called dilations, while the eigenvalues $(1,1,2,3)$ of $A$ are referred to as their weights.
Since $G$ is connected and simply-connected, each Lie algebra automorphism $\delta_\lambda\colon\mathfrak{g}\to\mathfrak{g}$ integrates uniquely to a Lie group automorphism $\delta_\lambda\colon G\to G$ called a group dilation.

To extend the notion of weight to arbitrary smooth differential forms while regarding smooth functions as coefficients of weight zero, one can use left translations. For every $x\in G$, define
\begin{align*}
    \delta_{\lambda,x}:=d_eL_x\circ\delta_\lambda\circ d_xL_{x^{-1}}
\colon T_xG\to T_xG
\end{align*}

For each $\lambda>0$, the family $(\delta_{\lambda,x})_{x\in G}$ defines a vector-bundle automorphism of $TG$ covering the identity map of $G$, which we again denote by $\delta_\lambda$. It induces a $\C^\infty(G)$-linear action
$\delta_\lambda\colon\Omega^k(G)\to\Omega^k(G)
$
defined as
$$(\delta_\lambda\alpha)_x(v_1,\ldots,v_k)
:=
\alpha_x\bigl(
\delta_{\lambda,x}v_1,\ldots,\delta_{\lambda,x}v_k
\bigr)\ \text{ for every }\alpha\in\Omega^k(G),\,v_1,\ldots,v_k\in T_xG\,,\text{ and }x\in G\,.$$
In this way, the action of $\delta_\lambda$ on a differential form is a fibrewise \(\C^\infty(G)\)-linear action rather than the pullback by the group dilation. In particular, $\delta_\lambda(f\alpha)=f\delta_\lambda\alpha$
for every \(f\in \C^\infty(G)\) and \(\alpha\in\Omega^\bullet(G)\).

It is standard to keep the same notation $\delta_\lambda$ for all these dilation maps. One can check that $\delta_\lambda$ also respects the wedge product, so that
\begin{align*}
    \delta_\lambda(\alpha_1\wedge\alpha_2)=(\delta_\lambda\alpha_1)\wedge(\delta_\lambda\alpha_2)\ \text{ for all }\alpha_1,\alpha_2\in\Omega^\bullet(G)\,.
\end{align*}
\begin{remark}\label{rmk: smooth modules of forms}
    One can consider the subcomplex of the de Rham complex consisting of left-invariant differential forms $\Omega_L^\bullet(G)$. A left-invariant $k$-form is uniquely determined by its value at the identity, where it defines a linear map $\bigwedge^k\mathfrak{g}\to\mathbb{R}$, by identifying the tangent space at the identity with the Lie algebra $\mathfrak{g}$, so that $\Omega_L^k(G)\cong\bigwedge^k\mathfrak{g}^\ast$.

Furthermore, one can identify the tangent space $T_xG$ to $G$ at a point $x\in G$ with $\mathfrak{g}$ by means of the isomorphism $dL_x$, where $L_x$ denotes the left-translation by $x\in G$. For $\xi\in\bigwedge^k\mathfrak{g}^\ast$ and $f\in \mathcal{C}^\infty(G)$, we can regard $f\otimes\xi$ as a smooth $k$-form by 
$$\Phi(f\otimes\xi)_x(v_1,\ldots,v_k)=f(x)\,\xi(d_xL_{x^{-1}}v_1,\ldots,d_xL_{x^{-1}}v_k)\,,$$
giving rise to the isomorphism
\begin{align*}
    \operatorname{Hom}_\mathbb{R}\big(\bigwedge\nolimits^k\mathfrak{g},\mathcal{C}^\infty(G)\big)\cong\mathcal{C}^\infty(G)\otimes\bigwedge\nolimits^k\mathfrak{g}^\ast\cong\Gamma(\bigwedge\nolimits^kT^\ast G)=\Omega^k(G)\,.
\end{align*}
Similarly, when dealing with compactly supported smooth forms $\Omega^\bullet_c(G)$, we can again use left-translation to obtain the isomorphism $\Omega^k_c(G)\cong\mathcal{C}_c^\infty(G)\otimes\bigwedge^k\mathfrak{g}^\ast$.
\end{remark}
\begin{definition}[Weights of forms]\label{def: weights of forms}  We say that a non-zero form $\alpha\in\Omega^\bullet(G)$ is \textit{homogeneous} if there exists $p\in\mathbb{R}$ such that $\delta_\lambda\alpha=\lambda^p\alpha$ for all $\lambda>0$. The number $p$ is referred to as the \textit{weight} of $\alpha$ and we will write it as $w(\alpha)=p$. 
\end{definition}
Given the basis $(X_1,X_2,X_3,X_4)$ adapted to the stratification \eqref{eq: stratification}, we have that
\begin{align*}
    w(X_1)=w(X_2)=1\ ,\ w(X_3)=2 \text{ and }w(X_4)=3\,.
\end{align*}
Its dual basis $(\theta_1,\theta_2,\theta_3,\theta_4)$ such that $\theta_i(X_j)=\delta_{ij}$ also reflects the stratification in terms of weights, as
\begin{align*}
    w(\theta_1)=w(\theta_2)=1\ ,\ w(\theta_3)=2\text{ and }w(\theta_4)=3\,.
\end{align*}

By Definition \ref{def: weights of forms}, we get that non-zero 0-forms $\Omega^0(G)=\mathcal{C}^\infty(G)$ have weight 0, while the volume covector $\operatorname{vol}=\theta_1\wedge\theta_2\wedge\theta_3\wedge\theta_4$ has weight $Q=7$, where $Q$ is the homogeneous dimension of $G$, and hence the Hausdorff dimension associated with its Carnot-Carath\'eodory metric.
Furthermore, the identification $\Omega^\bullet(G)\cong\mathcal{C}^\infty(G)\otimes\bigwedge^\bullet\mathfrak{g}^\ast$ presented in Remark \ref{rmk: smooth modules of forms} implies that a form $\alpha\in\Omega^k(G)$ is homogeneous of weight $p$ if $\alpha=\sum_jf_j\otimes\xi_j$, where $f_j\in\mathcal{C}^\infty(G)$ and $w(\xi_j)=p$ for each $j$. 

\begin{lemma}[Lemma 2.5 in \cite{biundo2026pansupullbackspectralcomplexes}]\label{lem: red lemma} Given two arbitrary homogeneous $k$-forms $\alpha_1,\alpha_2\in\Omega^k(G)$, if $w(\alpha_1)\neq w(\alpha_2)$ then they are linearly independent. 
\end{lemma}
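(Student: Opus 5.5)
Linear independence of two non-zero homogeneous forms of distinct weights will follow from the eigenvector argument for the operators $\delta_\lambda$. Note that $\alpha_1,\alpha_2$ are non-zero by Definition \ref{def: weights of forms}. Set $p_i:=w(\alpha_i)$, so $p_1\neq p_2$. Suppose $c_1\alpha_1+c_2\alpha_2=0$ with $c_1,c_2\in\mathbb{R}$; I will show $c_1=c_2=0$.

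First, apply $\delta_\lambda$ to the relation. Since the action $\delta_\lambda\colon\Omega^k(G)\to\Omega^k(G)$ is $\mathbb{R}$-linear (it is even $\C^\infty(G)$-linear, being defined fibrewise), we get for every $\lambda>0$
\begin{align*}
c_1\lambda^{p_1}\alpha_1+c_2\lambda^{p_2}\alpha_2=0\,.
\end{align*}
Next, subtract $\lambda^{p_2}$ times the original relation. This gives $c_1(\lambda^{p_1}-\lambda^{p_2})\alpha_1=0$. Choosing any $\lambda>0$ with $\lambda\neq1$, for instance $\lambda=2$, we have $\lambda^{p_1}\neq\lambda^{p_2}$, because $t\mapsto 2^t$ is injective and $p_1\neq p_2$. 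Hence $c_1\alpha_1=0$. Since $\alpha_1\neq0$, it follows that $c_1=0$. Then $c_2\alpha_2=0$ and $\alpha_2\neq0$ force $c_2=0$.

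The argument has no real obstacle. The only subtle point is that linear independence is meant over $\mathbb{R}$, not over $\C^\infty(G)$. Over the function ring the statement would fail: for example $\theta_3$ and $f\theta_1$ with $f$ vanishing somewhere still have distinct weights, but $\C^\infty(G)$-combinations with disjointly supported coefficients vanish. So I would state explicitly that scalars are real. The argument only uses that $\delta_\lambda$ is an $\mathbb{R}$-linear operator having $\alpha_i$ as eigenvectors with distinct eigenvalues $2^{p_1}\neq2^{p_2}$. It is the standard fact that eigenvectors for distinct eigenvalues are independent, and it extends verbatim to any finite family of homogeneous forms with pairwise distinct weights, by induction.
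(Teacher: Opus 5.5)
Your proof is correct. Applying the $\mathbb{R}$-linear operator $\delta_2$ to a relation $c_1\alpha_1+c_2\alpha_2=0$ and using $2^{p_1}\neq 2^{p_2}$ is exactly the eigenvector argument the statement calls for. Your induction remark for finitely many pairwise distinct weights is also what the paper actually relies on later: the directness of \eqref{eq: smooth forms according to weight and degree} and the uniqueness step in the proof of Proposition \ref{prop: truncated multicomplex}.

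The paper gives no proof of its own here, only the citation to \cite{biundo2026pansupullbackspectralcomplexes}. The closest thing in the text is the coordinate viewpoint $\Omega^k(G)\cong\C^\infty(G)\otimes\bigwedge^k\mathfrak{g}^\ast$, in which each $\theta_I$ satisfies $\delta_\lambda\theta_I=\lambda^{w(I)}\theta_I$. Applying your distinct-powers-of-$\lambda$ trick coefficientwise then shows that a homogeneous form of weight $p$ has nonzero coefficients only along the $\theta_I$ of weight $p$. That version requires a basis but gives slightly more: forms of different weights involve disjoint sets of basis covectors, which also yields the spanning half of the weight decomposition. Your argument, by contrast, is basis-free.

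One small correction to your side remark. The forms $\theta_3$ and $f\theta_1$ are $\C^\infty(G)$-dependent only if $f$ vanishes on a nonempty open set; then you take the coefficient of $\theta_3$ to be $0$ and the other coefficient supported in that set. If the zero set of $f$ has empty interior, $gf=0$ forces $g=0$ by continuity.
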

As a consequence, the space of smooth forms $\Omega^k(G)$ admits a direct sum decomposition given by the weight. Throughout this paper, we will express this decomposition using the following notation 
\begin{align}\label{eq: smooth forms according to weight and degree}
    \Omega^k(G)=\bigoplus_{\substack{p+q=k\\0\le p\le 7}}\mathcal{C}^\infty(G)\otimes\bigwedge\nolimits^{p,q}\mathfrak{g}^\ast=\bigoplus_{\substack{p+q=k\\ 0\le p\le 7}}\Omega^{p,q}(G)
\end{align}
and likewise in the case of compactly supported smooth forms
\begin{align*}
    \Omega^k_c(G)=\bigoplus_{\substack{p+q=k\\ 0\le p\le 7}}\mathcal{C}_c^\infty(G)\otimes\bigwedge\nolimits^{p,q}\mathfrak{g}^\ast=\bigoplus_{\substack{p+q=k\\ 0\le p\le 7}}\Omega_c^{p,q}(G)\,.
\end{align*}
Here we are using the notation
$$\bigwedge\nolimits^{p,q}\mathfrak{g}^\ast:=\{\xi\in\bigwedge\nolimits^{p+q}\mathfrak{g}^\ast\mid\delta_\lambda\xi=\lambda^p\xi\text{ for every }\lambda>0\}$$
and so $\bigwedge^{p,q}\mathfrak{g}^\ast$, $\Omega^{p,q}(G)$ and $\Omega^{p,q}_c(G)$ denote the spaces of forms of weight $p$  and degree $p+q=k$ which are respectively left-invariant, smooth and smooth with compact support. 

In the specific case of the Engel group, this direct sum decomposition can be explicitly seen as follows:
\begin{itemize}
    \item $k=1$ $$\bigwedge\nolimits^1\mathfrak{g}^\ast=\mathfrak{g}^\ast=\bigoplus_{1\le p\le 3}\bigwedge\nolimits^{p,1-p}\mathfrak{g}^\ast$$
    where $\bigwedge^{1,0}\mathfrak{g}^\ast=\operatorname{span}_\mathbb{R}\{\theta_1,\theta_2\}$, $\bigwedge^{2,-1}\mathfrak{g}^\ast=\operatorname{span}_\mathbb{R}\{\theta_3\}$, and $\bigwedge^{3,-2}\mathfrak{g}^\ast=\operatorname{span}_\mathbb{R}\{\theta_4\}$;
    \item $k=2$
    \begin{align*}
        \bigwedge\nolimits^{2}\mathfrak{g}^\ast=\bigoplus_{2\le p\le 5}\bigwedge\nolimits^{p,2-p}\mathfrak{g}^\ast
    \end{align*}
where $\bigwedge^{2,0}\mathfrak{g}^\ast=\operatorname{span}_\mathbb{R}\{\theta_1\wedge\theta_2\}$, $\bigwedge^{3,-1}\mathfrak{g}^\ast=\operatorname{span}_\mathbb{R}\{\theta_1\wedge\theta_3,\theta_2\wedge\theta_3\}$, $\bigwedge^{4,-2}\mathfrak{g}^\ast=\operatorname{span}_\mathbb{R}\{\theta_1\wedge\theta_4,\theta_2\wedge\theta_4\}$, and $\bigwedge^{5,-3}\mathfrak{g}^\ast=\operatorname{span}_\mathbb{R}\{\theta_3\wedge\theta_4\}$;
\item $k=3$
$$\bigwedge\nolimits^3\mathfrak{g}^\ast=\bigoplus_{4\le p\le 6}\bigwedge\nolimits^{p,3-p}\mathfrak{g}^\ast$$
where $\bigwedge^{4,-1}\mathfrak{g}^\ast=\operatorname{span}_\mathbb{R}\{\theta_1\wedge\theta_2\wedge\theta_3\}$, $\bigwedge^{5,-2}\mathfrak{g}^\ast=\operatorname{span}_{\mathbb{R}}\{\theta_1\wedge\theta_2\wedge\theta_4\}$, and $\bigwedge^{6,-3}\mathfrak{g}^\ast=\operatorname{span}_\mathbb{R}\{\theta_1\wedge\theta_3\wedge\theta_4,\theta_2\wedge\theta_3\wedge\theta_4\}$;
\item $k=4$
$$\bigwedge\nolimits^4\mathfrak{g}^\ast=\bigwedge\nolimits^{7,-3}\mathfrak{g}^\ast$$
where $\bigwedge^{7,-3}\mathfrak{g}^\ast=\operatorname{span}_\mathbb{R}\{\theta_1\wedge\theta_2\wedge\theta_3\wedge\theta_4\}$.

\end{itemize}
\begin{remark}\label{rmk: weight of non hom form} If a $k$-form $\alpha$ is not homogeneous but instead decomposes as a sum of homogeneous components of different weight, we adopt the same convention as in \cite{kleiner2020pansu,biundo2026pansupullbackspectralcomplexes}, namely
\begin{align*}
    \text{ if }\alpha=\sum_{j=p_{\min} }^{p_{\max}}\alpha_j\ \text{ where }\alpha_j\in\Omega^{j,k-j}(G)\ \text{ then }\ w(\alpha)=p_{\min}\,.
\end{align*}
In other words, the weight of $\alpha$ is defined as the minimal weight among its non-vanishing homogeneous components.    
\end{remark}
\begin{lemma}\label{lem: Lemma 2.7}The stratification \eqref{eq: stratification} of the Lie algebra $\mathfrak{g}$ induces a decomposition of the exterior de Rham differential $d$ acting on the space of smooth forms on the Engel group which can be easily expressed in terms of the weight increase. Given an arbitrary form $\alpha\in\Omega^{p,k-p}(G)$ of homogeneous weight $p$ and degree $k$, one can write
\begin{align*}
    d\alpha=d_0\alpha+d_1\alpha+d_2\alpha+d_3\alpha
\end{align*}
where $d_i\alpha\in\Omega^{p+i,k+1-p-i}(G)$ for each $i=0,1,2,3$.
\end{lemma}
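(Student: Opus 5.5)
We will prove Lemma \ref{lem: Lemma 2.7} by writing $\alpha$ in the left-invariant frame and splitting $d\alpha$ into two pieces of different nature. By Remark \ref{rmk: smooth modules of forms} and the direct sum decomposition \eqref{eq: smooth forms according to weight and degree}, we can write $\alpha=\sum_j f_j\,\xi_j$ with $f_j\in\C^\infty(G)$ and $\xi_j\in\bigwedge^{p,k-p}\mathfrak{g}^\ast$ left-invariant of weight $p$. The Leibniz rule gives
\[
d\alpha=\sum_j df_j\wedge\xi_j+\sum_j f_j\,d\xi_j,\qquad df_j=\sum_{i=1}^4 (X_if_j)\,\theta_i .
\]
Since $d$ is $\C^\infty(G)$-linear in neither term, we treat the two sums separately and show that each term lands in a weight between $p$ and $p+3$. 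Because $\delta_\lambda$ is $\C^\infty(G)$-linear and multiplicative for $\wedge$, weights add under wedge products. The term $(X_if_j)\theta_i\wedge\xi_j$ is therefore either zero or homogeneous of weight $p+w(\theta_i)\in\{p+1,p+2,p+3\}$.

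For the second sum, the key step is to show that $d$ preserves weight on left-invariant forms. We use the Chevalley--Eilenberg formula: on $1$-forms, $d\theta(X,Y)=-\theta([X,Y])$. The dilations $\delta_\lambda$ are Lie algebra automorphisms, so $d$ commutes with $\delta_\lambda$ on $\bigwedge^\bullet\mathfrak{g}^\ast$. Hence $d\xi_j$ is homogeneous of weight $p$, or zero. One can also verify this explicitly: $d\theta_1=d\theta_2=0$, $d\theta_3=-\theta_1\wedge\theta_2$ and $d\theta_4=-\theta_1\wedge\theta_3$, which have weights $2$ and $3$ respectively. We then extend by the Leibniz rule on left-invariant forms.

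We then define
\[
d_0\alpha:=\sum_j f_j\,d\xi_j,\qquad d_1\alpha:=\sum_j\bigl((X_1f_j)\theta_1+(X_2f_j)\theta_2\bigr)\wedge\xi_j,
\]
\[
d_2\alpha:=\sum_j (X_3f_j)\theta_3\wedge\xi_j,\qquad d_3\alpha:=\sum_j (X_4f_j)\theta_4\wedge\xi_j.
\]
Each $d_i\alpha$ has degree $k+1$ and lies in $\Omega^{p+i,k+1-p-i}(G)$. We must also check that this decomposition does not depend on the chosen representation of $\alpha$. This follows from Lemma \ref{lem: red lemma}: the components of $d\alpha$ of distinct weights are linearly independent, so $d_i\alpha$ is recovered as the weight-$(p+i)$ component of $d\alpha$ in the decomposition \eqref{eq: smooth forms according to weight and degree}.

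The only step needing care is the weight-preservation of $d$ on left-invariant forms, i.e.\ that $\delta_\lambda\circ d=d\circ\delta_\lambda$ on $\bigwedge^\bullet\mathfrak{g}^\ast$. We will justify it through the automorphism property of $\delta_\lambda$, and cross-check it with the explicit structure equations above.
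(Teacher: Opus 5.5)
Your proposal is correct and follows the paper's proof essentially step by step: you expand $\alpha=\sum_j f_j\xi_j$, apply the Leibniz rule, sort the terms $(X_if_j)\theta_i\wedge\xi_j$ by $w(\theta_i)$, and show that $d\xi_j$ has weight $p$ because the Chevalley--Eilenberg differential commutes with the Lie algebra automorphisms $\delta_\lambda$. Your cross-check via $d\theta_3=-\theta_1\wedge\theta_2$, $d\theta_4=-\theta_1\wedge\theta_3$ and your well-definedness remark via Lemma~\ref{lem: red lemma} are correct additions that the paper does not spell out; only the aside that neither sum is $\C^\infty(G)$-linear is inaccurate, since $\alpha\mapsto\sum_j f_j\,d\xi_j=d_0\alpha$ is $\C^\infty(G)$-linear, but nothing in your argument depends on it.
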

\begin{proof}
    Given an arbitrary smooth form $\alpha\in\Omega^{p,k-p}(G)$ of the form $\alpha=\sum_jf_j\xi_j$ and using the basis $(X_1,X_2,X_3,X_4)$ adapted to the stratification \eqref{eq: stratification}, we get the formula
    \begin{align*}
        d\big(\sum_jf_j\xi_j\big)=&\sum_jdf_j\wedge\xi_j+\sum_jf_j d\xi_j=\sum_j\big(\sum_{i=1}^4X_if_j\theta_i\big)\wedge\xi_j+\sum_jf_j d\xi_j\\=&\underbrace{\sum_{j}\big(X_1f_j\theta_1\wedge\xi_j+X_2f_j\theta_2\wedge\xi_j\big)}_{\text{weight }p+1}+\underbrace{\sum_jX_3f_j\theta_3\wedge\xi_j}_{\text{weight }p+2}+\underbrace{\sum_jX_4f_j\theta_4\wedge\xi_j}_{\text{weight }p+3}+\underbrace{\sum_jf_jd\xi_j}_{\text{weight }p}\\=&d_1\alpha+d_2\alpha+d_3\alpha+d_0\alpha\,.
    \end{align*}
    Notice that, while the increase in weight of the first summand is a clear consequence of the Definition \ref{def: weights of forms}, the fact that $d\xi_j$ either vanishes or is homogeneous of the same weight $p$ as $\xi_j$ for each $j$ is not as straightforward. When dealing with a stratification such as in this case, one can show this using the group's dilations. Indeed, this second summand coincides with the action of the exterior derivative on left-invariant forms, which can be seen as
    \begin{align}\label{formula esplicita d0}
        d\xi_j(Y_1,\ldots,Y_{k+1})=\sum_{1\le i<l\le k+1} (-1)^{i+l}\xi_j([Y_i,Y_l],Y_1,\ldots,\hat{Y}_i,\ldots,\hat{Y}_l,\ldots,Y_{k+1})\ \text{ for all }Y_m\in\mathfrak{g}\,.
    \end{align}
    This formula, combined with the fact that dilations $\delta_\lambda$ are automorphisms of the Lie algebra $\mathfrak{g}$, readily implies that $d$ applied to $\bigwedge^k\mathfrak{g}^\ast$ commutes with the dilations. More explicitly, the fact that $\delta_\lambda(d\xi_j)=d(\delta_\lambda\xi_j)=d(\lambda^p\xi_j)=\lambda^pd\xi_j$ implies that $d\xi_j$ either vanishes or is homogeneous of weight $p$, so that
$$d_0\alpha=\sum_jf_jd\xi_j\in\Omega^{p,k+1-p}(G)\,.$$
\end{proof}
Before applying the spectral sequence machinery developed in \cite{tripaldi2026spectralcomplexestruncatedmulticomplexes}, we first need to show that the de Rham complex $(\Omega^\bullet(G),d)$ on the Engel group is a truncated multicomplex.
\begin{definition}[Adapted from Definition 2.1 in  \cite{lerario2023multicomplexes}] Let $\mathbb K$ be a commutative unital ground ring and let $s\in\mathbb N$. An $s$-multicomplex (also known as a twisted chain complex) is a $(\mathbb{Z},\mathbb{Z})$-graded $\mathbb K$-module $\mathcal{C}$ equipped with maps $d_i\colon\mathcal{C}\to\mathcal{C}$ for $i\ge 0$ of bidegree $\vert d_i\vert=(i,1-i)$ such that\begin{align}\label{eq: structure maps multicomplex}
    \sum_{i+j=n}d_id_j=0\ \text{ for all }n\ge 0\text{ and }d_i=0\text{ for all }i>s\,.
\end{align}
Here, we are using the cohomological convention for the bidegree of the maps $d_i$ adapted from Definition 2.1 in \cite{livernet2020spectral}.
\end{definition}
The following result is the specialisation of Proposition 2.9 in \cite{biundo2026pansupullbackspectralcomplexes} in the case of the Engel group.
\begin{proposition}\label{prop: truncated multicomplex}
    The de Rham complex $(\Omega^\bullet(G),d)$ on the Engel group with stratification \eqref{eq: stratification} is a truncated $3$-multicomplex.
\end{proposition}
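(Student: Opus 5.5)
The plan is to check the three ingredients of the definition directly: the bigrading, the bidegrees of the maps $d_i$, and the relations \eqref{eq: structure maps multicomplex}, together with the truncation $d_i=0$ for $i>3$.

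\textbf{Bigrading and bidegrees.} Take $\mathcal C^{p,q}:=\Omega^{p,q}(G)$, with $\mathcal C^{p,q}=0$ whenever $\bigwedge^{p,q}\mathfrak g^\ast=0$. The decomposition \eqref{eq: smooth forms according to weight and degree}, which is a direct sum by Lemma \ref{lem: red lemma}, makes $\Omega^\bullet(G)$ a $(\mathbb Z,\mathbb Z)$-graded $\mathbb R$-module. Lemma \ref{lem: Lemma 2.7} supplies maps $d_0,d_1,d_2,d_3$, defined on homogeneous forms and extended additively, with $d_i\colon\Omega^{p,q}\to\Omega^{p+i,q+1-i}$. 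This is exactly bidegree $(i,1-i)$. Setting $d_i:=0$ for $i\ge 4$ gives $d=\sum_{i=0}^3 d_i$. The explicit formula in the proof of Lemma \ref{lem: Lemma 2.7} makes each $d_i$ well defined and independent of the chosen expansion $\alpha=\sum_j f_j\xi_j$. For $i=1,2,3$, $d_i$ is wedging with the component $\sum_{\ell:\,w(\theta_\ell)=i}X_\ell f\,\theta_\ell$ of $df$. For $i=0$, $d_0$ is the Chevalley--Eilenberg differential applied to the left-invariant factor.

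\textbf{Relations.} Fix a homogeneous $\alpha\in\Omega^{p,k-p}(G)$ and expand $0=d^2\alpha=\sum_{i,j}d_id_j\alpha$. The term $d_id_j\alpha$ lies in $\Omega^{p+i+j,\,k+2-p-i-j}(G)$. Group the terms by $n=i+j$. Each group $\sum_{i+j=n}d_id_j\alpha$ is then homogeneous of weight $p+n$, or zero. By Lemma \ref{lem: red lemma}, the weight decomposition of a $(k+2)$-form is direct, so $d^2\alpha=0$ forces each weight component to vanish separately. This gives $\sum_{i+j=n}d_id_j\alpha=0$ for every $n\ge0$. Since every form is a finite sum of homogeneous ones and the $d_i$ are additive, the identity holds on all of $\Omega^\bullet(G)$.

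\textbf{Main obstacle.} The one step needing care is the direct-sum claim: that weight components are uniquely determined, so the weight $p+n$ component of zero is zero. Lemma \ref{lem: red lemma} only says that two homogeneous forms of different weights are linearly independent. That is weaker than independence of any finite family of homogeneous forms with pairwise distinct weights. To close the gap, I would argue fibrewise using the $\C^\infty(G)$-linear action of $\delta_\lambda$. Suppose $\sum_m\beta_m=0$ with $w(\beta_m)=p_m$ pairwise distinct. Applying $\delta_\lambda$ gives $\sum_m\lambda^{p_m}\beta_m=0$ for all $\lambda>0$. Choosing as many distinct values of $\lambda$ as there are summands yields an invertible Vandermonde system, so each $\beta_m=0$.

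Alternatively, uniqueness is immediate from the basis: $\Omega^k(G)\cong\C^\infty(G)\otimes\bigwedge^k\mathfrak g^\ast$, and the wedge monomials $\theta_I$ are a basis of $\bigwedge^k\mathfrak g^\ast$ in which each element has a definite weight, so the weight components are just the coefficient groupings in this basis. Either argument settles the step, and the proposition then follows, in agreement with Proposition 2.9 of \cite{biundo2026pansupullbackspectralcomplexes}.
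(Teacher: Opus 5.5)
Your proposal is correct and follows the paper's proof essentially step for step. You take the bigrading from the weight decomposition and the bidegrees of the $d_i$ from Lemma~\ref{lem: Lemma 2.7}, then split $0=d^2\alpha$ into its weight $p+n$ components and use directness of the decomposition. Your remark that pairwise independence (Lemma~\ref{lem: red lemma}) does not by itself give directness is a fair refinement; the paper simply takes directness from \eqref{eq: smooth forms according to weight and degree}, which is exactly your second, basis-based argument.
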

\begin{proof}
    The direct sum decomposition according to weights given in \eqref{eq: smooth forms according to weight and degree} endows the space of smooth forms $\Omega^\bullet(G)$ with a bigrading. In particular, each $\Omega^{p,q}(G)$ is an $\mathbb{R}$-module of bidegree $(p,q)\in\mathbb{N}\times\mathbb{Z}$. Moreover, in Lemma \ref{lem: Lemma 2.7} we established the existence of differential maps $d_i\colon\Omega^\bullet(G)\to\Omega^\bullet(G)$ of bidegree $\vert d_i\vert=(i,1-i)$ with $i=0,1,2,3$. We are left to show that the formulae \eqref{eq: structure maps multicomplex} also hold.

    These follow directly from the fact that $(\Omega^\bullet(G),d)$ is a complex and that forms of different weight are linearly independent by Lemma \ref{lem: red lemma}. Indeed, for any $k$-form $\alpha\in\Omega^{p,k-p}(G)$, we have the following explicit formula for $d^2\alpha$:
    \begin{align*}
        0=d^2\alpha=&(d_0+d_1+d_2+d_3)(d_0+d_1+d_2+d_3)\alpha\\=&\underbrace{d_0^2\alpha}_{\text{weight }p}+\underbrace{(d_0d_1+d_1d_0)\alpha}_{\text{weight }p+1}+\underbrace{(d_0d_2+d_1^2+d_2d_0)\alpha}_{\text{weight }p+2}+\underbrace{(d_0d_3+d_1d_2+d_2d_1+d_3d_0)\alpha}_{\text{weight }p+3}+\\&+\underbrace{(d_1d_3+d_2^2+d_3d_1)\alpha}_{\text{weight }p+4}+\underbrace{(d_2d_3+d_3d_2)\alpha}_{\text{weight }p+5}+\underbrace{d_3^2\alpha}_{\text{weight }p+6}\,.
    \end{align*}
    This means that for each $n=0,\ldots,6$, the form $\big(\sum_{i+j=n}d_id_j\big)\alpha$ belongs to the homogeneous component of weight $p+n$. Since the decomposition of $\Omega^{k+2}(G)$ according to the weight is direct by \eqref{eq: smooth forms according to weight and degree}, the uniqueness of this decomposition implies that $$\sum_{i+j=n}d_id_j\alpha=0\ \text{ for every }n=0,\ldots,6\,.$$  For $n>6$, the identity holds automatically because $d_i=0$ for $i>3$.
\end{proof}

Before moving on to study the properties of spectral complexes on the Engel group, let us stress the fact that in the proof of Proposition \ref{prop: truncated multicomplex} it is important that the ground ring of the multicomplex be taken to be $\mathbb{R}$. Indeed, in the original construction from \cite{livernet2020spectral}, the subspaces $Z_r^{p,k-p}$ and $B_r^{p,k-p}$ as in Definition \ref{def: Z and B defined} turn out to be $\mathbb K$-submodules of the original $\mathbb K$-module $\mathcal{C}$ as long as the maps $d_i\colon\mathcal{C}\to\mathcal{C}$ are $\mathbb K$-linear. In the case of a Carnot group such as our Engel group, only the algebraic differential $d_0$ turns out to be a $\mathcal{C}^\infty(G)$-linear map, while $d_1,d_2,d_3$ are differential maps which are only $\mathbb{R}$-linear. Therefore, for $r=1$ the spaces $Z_1^{p,k-p}$ and $B_1^{p,k-p}$ are $\C^\infty(G)$-submodules, since their definitions involve only the $\C^\infty(G)$-linear operator $d_0$. For $r>1$, one or more of the operators $d_1,d_2,d_3$ enter the construction, and so $Z_r^{p,k-p}$ and $B_r^{p,k-p}$ are in general only $\mathbb{R}$-vector subspaces and need not be $\C^\infty(G)$-submodules. 
\subsection{The Rumin complex} Even though the present paper only focuses on the behaviour of spectral complexes on the Engel group as introduced in \cite{tripaldi2026spectralcomplexestruncatedmulticomplexes}, their properties can be easily expressed in terms of the Rumin complex. For this reason, we will also briefly present the Rumin complex on the Engel group together with its main properties. A more thorough exposition of how to construct the Rumin complex in the Engel group can be found in \cite{rumin_grenoble,GagliardoNiremb,F+T1}.
\subsubsection{Introducing a scalar product on $\Omega^\bullet(G)$}\label{subsection: scalar product}
Once a scalar product has been fixed, the space of Rumin forms $E_0^\bullet$ can be realised as a graded subspace of $\Omega^\bullet(G)$ representing the cohomology of the complex $(\Omega^\bullet(G),d_0)$. Endowed with the differential $d_c$, it forms the Rumin complex $(E_0^\bullet,d_c)$. The identity $d_0^2=0$ follows directly from equation \eqref{eq: structure maps multicomplex} by taking $n=0$. Since we want to realise the $d_0$-cohomology as a subspace of smooth forms rather than as the quotient $\ker d_0/\operatorname{Im}d_0$, we need to select a complement of $\operatorname{Im}d_0$ inside $\ker d_0$. 

This can easily be obtained by introducing a scalar product on $\mathfrak{g}$ for which $X_1,X_2$ are orthonormal and extend it to $\mathfrak{g}$ by declaring the layers of the stratification \eqref{eq: stratification} mutually orthogonal so that the adapted basis $(X_1,X_2,X_3,X_4)$ is orthonormal. We orient $\mathfrak{g}$ according to this ordered basis, so that the dual basis $(\theta_1,\theta_2,\theta_3,\theta_4)$ is orthonormal and $\operatorname{vol}=\theta_1\wedge\theta_2\wedge\theta_3\wedge\theta_4$ is the associated unit volume form. This scalar product then canonically extends to the space of left-invariant forms $\Omega_L^\bullet(G)\cong\bigwedge^\bullet\mathfrak{g}^\ast$.

Since the scalar product $\langle\cdot,\cdot\rangle_k$ defined on each $\bigwedge^k\mathfrak{g}^\ast$ is adapted to the stratification, we have that covectors of different weight are orthogonal, i.e. for each degree $k=0,1,2,3,4$
\begin{align}\label{eq: diff weight implies orthog}
    \text{ given }\xi_1\in\bigwedge\nolimits^{p_1,k-p_1}\mathfrak{g}^\ast\ \text{ and }\ \xi_2\in\bigwedge\nolimits^{p_2,k-p_2}\mathfrak{g}^\ast\ ,\ \text{ if }p_1\neq p_2\text{ then }\langle\xi_1,\xi_2\rangle=0\,.
\end{align}
It is then possible to define the Hodge-$\star$ operator as the linear map acting on $\bigwedge^\bullet\mathfrak{g}^\ast$ as
\begin{align*}
    \star\colon\bigwedge\nolimits^k\mathfrak{g}^\ast\xrightarrow[]{\cong}\bigwedge\nolimits^{4-k}\mathfrak{g}^\ast\ ,\ \xi_1\wedge\star\xi_2:=\langle\xi_1,\xi_2\rangle_k\operatorname{vol}\ \text{ for any }\xi_1,\xi_2\in\bigwedge\nolimits^k\mathfrak{g}^\ast\,.
\end{align*}

The scalar product \(\langle\cdot,\cdot\rangle_k\) on \(\bigwedge^k\mathfrak g^\ast\) extends pointwise to a \(\mathcal C^\infty(G)\)-valued inner product on \(\Omega^k(G)\). More precisely, under the identification $
\Omega^k(G)
\cong
\mathcal C^\infty(G)\otimes\bigwedge\nolimits^k\mathfrak g^\ast$, we define
$$
\left\langle
\sum_i f_i\xi_i,\sum_j g_j\eta_j
\right\rangle_k
:=
\sum_{i,j}f_ig_j\langle\xi_i,\eta_j\rangle_k
\in\mathcal C^\infty(G).
$$
In particular, this extension is \(\mathcal C^\infty(G)\)-bilinear, since $
\langle f\alpha_1,g\alpha_2\rangle_k
=
fg\langle\alpha_1,\alpha_2\rangle_k
$ for every \(f,g\in\mathcal C^\infty(G)\) and
\(\alpha_1,\alpha_2\in\Omega^k(G)\).

The Hodge-\(\star\) operator also extends accordingly to a
\(\mathcal C^\infty(G)\)-linear map
$$
\star\colon\Omega^k(G)\xrightarrow[]{\cong}\Omega^{4-k}(G)\ ,\ 
\alpha_1\wedge\star\alpha_2
=
\langle\alpha_1,\alpha_2\rangle_k\operatorname{vol}\,.
$$
Since covectors of different weight are orthogonal by \eqref{eq: diff weight implies orthog}, smooth homogeneous forms of different weight are pointwise orthogonal:
$$
\text{given }\alpha_1\in\Omega^{p_1,k-p_1}(G)\ \text{ and }
\
\alpha_2\in\Omega^{p_2,k-p_2}(G)\ ,\ \text{ if }
p_1\neq p_2\ \text{ then }
\langle\alpha_1,\alpha_2\rangle_k=0\,.
$$

For a linear subspace
\(W\subseteq\bigwedge^k\mathfrak g^\ast\), let
$
\operatorname{pr}_W\colon
\bigwedge^k\mathfrak g^\ast
\to W
$
denote the corresponding finite-dimensional orthogonal projection. It extends \(\mathcal C^\infty(G)\)-linearly to the space of smooth forms:
$
\operatorname{Id}_{\mathcal C^\infty(G)}
\otimes\operatorname{pr}_W\colon
\Omega^k(G)
\to
\mathcal C^\infty(G)\otimes W
$.
Whenever
$
S=\mathcal C^\infty(G)\otimes W
\subseteq\Omega^k(G)
$
is a submodule of this form, we denote this fibrewise orthogonal projection by \(\operatorname{pr}_S\). The orthogonal complements and projections associated below with
\(\operatorname{Im}d_0\), \(\operatorname{Im}\delta_0\), and \(E_0^\bullet\) are understood in this fibrewise sense (see Definitions \ref{def: delta_0}, \ref{def: partial inverse of d_0}, and \ref{def: E_0}).

Separately, the pointwise inner product induces a scalar-valued \(L^2\)-inner product on compactly supported smooth forms. For
\(\alpha_1,\alpha_2\in\Omega_c^k(G)\), define
$$
(\alpha_1,\alpha_2)_{L^2}
:=
\int_G
\langle\alpha_1,\alpha_2\rangle_k\operatorname{vol}
=
\int_G\alpha_1\wedge\star\alpha_2\,.
$$

The integral is finite because both forms are compactly supported. The completion of $\Omega^k_c(G)$ with respect to the induced norm gives the Hilbert space \(L^2\Omega^k(G)\). The same integral is also well defined as a pairing whenever one of the two smooth forms is compactly supported.

The fibrewise projections introduced above preserve smoothness and compact support and extend to bounded orthogonal projections on \(L^2\Omega^k(G)\). We will keep the notation
\(\langle\cdot,\cdot\rangle_k\) to refer to the pointwise \(\mathcal C^\infty(G)\)-valued inner product, whereas
\((\cdot,\cdot)_{L^2}\) will denote the integrated \(L^2\)-inner product.

It is also useful to point out the effect of the Hodge-$\star$ operator on the weight decomposition. Since the Hodge-$\star$ maps forms of degree $k$ to forms of degree $4-k$ and changes their weight from $p$ to $Q-p=7-p$, we have
\begin{align}\label{eq: Hodge star and weights}
\star\bigl(\Omega^{p,k-p}(G)\bigr)
=
\Omega^{7-p,p-k-3}(G).
\end{align}

\begin{definition}[The adjoint of $d_0$]\label{def: delta_0} We can use the scalar product $\langle\cdot,\cdot\rangle_k$ just introduced to define the fibrewise or algebraic adjoint of $d_0$, which we will denote by $\delta_0$. In other words, the map $\delta_0\colon\Omega^{p,k-p}(G)\to\Omega^{p,k-1-p}(G)$ is defined by imposing
\begin{align*}
    \langle d_0\alpha_1,\alpha_2\rangle_{k}=\langle\alpha_1,\delta_0\alpha_2\rangle_{k-1}\ \text{ for any }\alpha_1\in\Omega^{k-1}(G)\text{ and }\alpha_2\in\Omega^{k}(G)\,.
\end{align*}    
\end{definition}
The fact that $\delta_0$ keeps the weight of forms constant, i.e. $\delta_0\big(\Omega^{p,q}(G)\big)\subset\Omega^{p,q-1}(G)$, is a direct consequence of the fact that elements of different weight are orthogonal and that $d_0$ keeps the weight of forms constant, since $\vert d_0\vert=(0,1)$.

Another crucial operator that can be defined from $d_0$ using the scalar product is its partial inverse $d_0^{-1}$. This operator plays a central role in the construction of the Rumin differential $d_c$.
\begin{definition}[The partial inverse $d_0^{-1}$]\label{def: partial inverse of d_0}
For every weight $p$ and degree $k$, the finite-dimensional orthogonal decomposition on $\bigwedge^\bullet\mathfrak{g}^\ast$, extended $\mathcal{C}^\infty(G)$-linearly to smooth forms, implies that the restriction
\begin{align*}
d_0\big\vert_{\operatorname{Im}\delta_0\cap\Omega^{p,k-1-p}(G)}
\colon
\operatorname{Im}\delta_0\cap\Omega^{p,k-1-p}(G)
\xrightarrow{\ \cong\ }
\operatorname{Im}d_0\cap\Omega^{p,k-p}(G)
\end{align*}
is an isomorphism of $\mathcal{C}^\infty(G)$-modules. Indeed, $
\operatorname{Im}\delta_0=(\ker d_0)^\perp$,
so the restriction is injective.

Following Rumin's construction, it is customary to use the shorthand notation to denote the partial inverse of $d_0$ as the $\C^\infty(G)$-linear map
\begin{align*}
d_0^{-1}\colon
\Omega^{p,k-p}(G)
&\longrightarrow
\Omega^{p,k-1-p}(G)\ ,\
d_0^{-1}
:=
\left(
d_0\big\vert_{\operatorname{Im}\delta_0}
\right)^{-1}
\circ\operatorname{pr}\big\vert_{\operatorname{Im}d_0}\,.
\end{align*}
Thus, $d_0^{-1}$ coincides with the inverse of $d_0$ on $\operatorname{Im}d_0$ and vanishes on $(\operatorname{Im}d_0)^\perp$. 
In particular, it readily follows that $(d_0^{-1})^2=0$ and $\ker d_0^{-1}=\ker\delta_0=(\operatorname{Im}d_0)^\perp$.
\end{definition}

Before defining the space of Rumin forms, let us stress that the map $d_0\colon\Omega^k(G)\to\Omega^{k+1}(G)$ is the $\mathcal C^\infty(G)$-linear extension of the Chevalley-Eilenberg differential $d_\mathfrak{g}$ on left-invariant forms. More precisely, under the identification
$
\Omega^\bullet(G)
\cong
\mathcal C^\infty(G)\otimes\bigwedge\nolimits^\bullet\mathfrak g^\ast$,we have
$$
d_0
=
\operatorname{Id}_{\mathcal C^\infty(G)}
\otimes d_{\mathfrak{g}},
$$
where $d_{\mathfrak{g}}$ acts between finite-dimensional spaces. Therefore, $
\operatorname{Im}d_0
=
\mathcal C^\infty(G)\otimes\operatorname{Im}d_{\mathfrak{g}}$,
so that $\operatorname{Im}d_0$ is a fibrewise complemented subspace of $\Omega^\bullet(G)$. Moreover, $d_0$ extends to a bounded operator
$$
d_0\colon
L^2\Omega^{p,k-p}(G)
\to
L^2\Omega^{p,k+1-p}(G)\,,
$$
whose image is
$
L^2(G)\otimes
\operatorname{Im}d_\mathfrak{g}$.
Since $\operatorname{Im}d_{\mathfrak{g}}$ is finite-dimensional, this image is closed in the $L^2$ topology.

\begin{definition}[The space of Rumin forms]\label{def: E_0} As shown in Proposition \ref{prop: truncated multicomplex}, the de Rham complex on the Engel group $(\Omega^\bullet(G),d)$ is a truncated multicomplex with $d=d_0+d_1+d_2+d_3$. In particular, equation \eqref{eq: structure maps multicomplex} for $n=0$ gives that $d_0^2=0$, that is $(\Omega^\bullet(G),d_0)$ is also a complex. Therefore, it is possible to compute its cohomology. As already mentioned, we are interested in identifying subspaces of forms instead of quotients $\ker d_0/\operatorname{Im}d_0$. Using the fibrewise scalar product just introduced, one can define the space of Rumin forms $E_0^\bullet\subset\Omega^\bullet(G)$ as
\begin{align}
    E_0^k=\ker d_0\cap (\operatorname{Im}d_0)^\perp\cap\Omega^k(G)=\ker d_0\cap\ker\delta_0\cap\Omega^k(G)\,,
\end{align}
the last equality following directly from the defining adjoint relation for $\delta_0$, which gives $(\operatorname{Im}d_0)^\perp=\ker\delta_0$.
\end{definition}
As a direct consequence of Definition \ref{def: E_0} we then get that the map
\[
E_0^k\longrightarrow H^k\bigl(\Omega^\bullet(G),d_0\bigr)\ ,
\
\alpha\longmapsto[\alpha]
\]
is an isomorphism. 

\begin{definition}[Orthogonal projection onto $E_0^\bullet$]\label{def: Pi_0}
We define
\begin{align*}
\Pi_0
&:=
\operatorname{Id}
-d_0d_0^{-1}
-d_0^{-1}d_0
\colon
\Omega^\bullet(G)
\longrightarrow
\Omega^\bullet(G).
\end{align*}
By Definition \ref{def: partial inverse of d_0}, the operators
$
d_0d_0^{-1}
=
\operatorname{pr}_{\operatorname{Im}d_0}$ and $
d_0^{-1}d_0
=
\operatorname{pr}_{\operatorname{Im}\delta_0}
$
are the fibrewise orthogonal projections onto $\operatorname{Im}d_0$ and $\operatorname{Im}\delta_0$, respectively. Moreover, $\Pi_0^2=(\operatorname{Id}-d_0d_0^{-1}-d_0^{-1}d_0)(\operatorname{Id}-d_0d_0^{-1}-d_0^{-1}d_0)=\operatorname{Id}-d_0d_0^{-1}-d_0^{-1}d_0=\Pi_0$, hence $\Pi_0$ is the fibrewise orthogonal projection onto the orthogonal complement of
$
\operatorname{Im}d_0\oplus\operatorname{Im}\delta_0$.

Finally, since $(\operatorname{Im}d_0)^\perp=\ker\delta_0$ and $(\operatorname{Im}\delta_0)^\perp=\ker d_0$,
we obtain $\operatorname{Im}\Pi_0
=
\ker d_0\cap\ker\delta_0
=
E_0^\bullet.
$, that is $\Pi_0=\operatorname{pr}_{E_0^\bullet}$. Moreover, as a direct consequence of the definition of this projection, we get that $\Pi_0$ preserves both weight and degree, that is
$\Pi_0\bigl(\Omega^{p,k-p}(G)\bigr)
\subseteq
E_0^k\cap\Omega^{p,k-p}(G)$.
\end{definition}

A useful alternative characterisation of the space of Rumin forms is obtained by introducing the algebraic Laplacian associated with $d_0$.

\begin{definition}[The algebraic Laplacian $\Box_0$]\label{def: Box_0}
The \emph{algebraic Laplacian} associated with $d_0$ is the fibrewise operator
\begin{align*}
\Box_0
&:=
d_0\delta_0+\delta_0d_0
\colon
\Omega^\bullet(G)
\longrightarrow
\Omega^\bullet(G)\,.
\end{align*}
Both summands have bidegree $(0,0)$, hence, just like the projection $\Pi_0$, the map $\Box_0$ preserves both the weight and degree of forms.
\end{definition}

The operator $\Box_0$ is fibrewise self-adjoint and non-negative. Indeed, for every $\alpha,\beta\in\Omega^k(G)$,
\begin{align*}
\langle\Box_0\alpha,\beta\rangle_k
&=
\langle\delta_0\alpha,\delta_0\beta\rangle_{k-1}
+
\langle d_0\alpha,d_0\beta\rangle_{k+1}
=
\langle\alpha,\Box_0\beta\rangle_k.
\end{align*}
Moreover, since
$\langle\Box_0\alpha,\alpha\rangle_k
=
\langle d_0\alpha,d_0\alpha\rangle_{k+1}
+
\langle\delta_0\alpha,\delta_0\alpha\rangle_{k-1}$,
it follows that
\begin{align}\label{eq: Rumin forms as harmonic rep}
\ker\Box_0
=
\ker d_0\cap\ker\delta_0
=
E_0^\bullet.
\end{align}

\begin{proposition}[Hodge decomposition for $\Box_0$]
For every degree $k$, the space of smooth $k$-forms admits the fibrewise orthogonal decomposition
\begin{align}\label{eq: Hodge decomp Box_0 degree}
\Omega^k(G)
=
\operatorname{Im}\bigl(
d_0\colon\Omega^{k-1}(G)\to\Omega^k(G)
\bigr)
\oplus
E_0^k
\oplus
\operatorname{Im}\bigl(
\delta_0\colon\Omega^{k+1}(G)\to\Omega^k(G)
\bigr).
\end{align}
Equivalently,
\begin{align}\label{eq: Hodge decomp Box_0}
\Omega^\bullet(G)
=
\operatorname{Im}d_0
\oplus
\ker\Box_0
\oplus
\operatorname{Im}\delta_0.
\end{align}
\end{proposition}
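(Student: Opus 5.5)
The plan is to reduce the statement to finite-dimensional linear algebra on $\bigwedge^k\mathfrak g^\ast$ and then extend $\mathcal C^\infty(G)$-linearly via the identification $\Omega^k(G)\cong\mathcal C^\infty(G)\otimes\bigwedge^k\mathfrak g^\ast$ of Remark~\ref{rmk: smooth modules of forms}. On the finite-dimensional inner product space $\bigwedge^k\mathfrak g^\ast$ we have the Chevalley--Eilenberg differential $d_{\mathfrak g}$ with $d_{\mathfrak g}^2=0$, and its adjoint $\delta_{\mathfrak g}$. These satisfy $d_0=\operatorname{Id}\otimes d_{\mathfrak g}$ and $\delta_0=\operatorname{Id}\otimes\delta_{\mathfrak g}$; the latter holds because the pointwise inner product is the $\mathcal C^\infty(G)$-bilinear extension of $\langle\cdot,\cdot\rangle_k$.

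\textbf{Step 1: the finite-dimensional decomposition.} I would prove
\[
\bigwedge\nolimits^k\mathfrak g^\ast=\operatorname{Im}d_{\mathfrak g}\oplus(\ker d_{\mathfrak g}\cap\ker\delta_{\mathfrak g})\oplus\operatorname{Im}\delta_{\mathfrak g}
\]
as an orthogonal sum.

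\emph{Orthogonality.} The identity $\langle d_{\mathfrak g}\xi,\delta_{\mathfrak g}\eta\rangle=\langle d_{\mathfrak g}^2\xi,\eta\rangle=0$ shows $\operatorname{Im}d_{\mathfrak g}\perp\operatorname{Im}\delta_{\mathfrak g}$. In finite dimensions we have $(\operatorname{Im}d_{\mathfrak g})^\perp=\ker\delta_{\mathfrak g}$ and $(\operatorname{Im}\delta_{\mathfrak g})^\perp=\ker d_{\mathfrak g}$. Hence the orthogonal complement of $\operatorname{Im}d_{\mathfrak g}\oplus\operatorname{Im}\delta_{\mathfrak g}$ is exactly $\ker d_{\mathfrak g}\cap\ker\delta_{\mathfrak g}$.

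\emph{Completeness.} Finite dimensionality gives $V=W\oplus W^\perp$ for any subspace $W$, so the three pieces exhaust $\bigwedge^k\mathfrak g^\ast$.

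Alternatively, one can argue via the Laplacian: $\Box_{\mathfrak g}$ is self-adjoint, so $\bigwedge^k\mathfrak g^\ast=\ker\Box_{\mathfrak g}\oplus\operatorname{Im}\Box_{\mathfrak g}$. One then checks $\operatorname{Im}\Box_{\mathfrak g}\subseteq\operatorname{Im}d_{\mathfrak g}+\operatorname{Im}\delta_{\mathfrak g}$, and combines this with \eqref{eq: Rumin forms as harmonic rep}.

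\textbf{Step 2: extension to smooth forms.} Tensor the decomposition of Step 1 with $\mathcal C^\infty(G)$. We need the three identities
\[
\mathcal C^\infty(G)\otimes\operatorname{Im}d_{\mathfrak g}=\operatorname{Im}d_0,\qquad
\mathcal C^\infty(G)\otimes\operatorname{Im}\delta_{\mathfrak g}=\operatorname{Im}\delta_0,\qquad
\mathcal C^\infty(G)\otimes(\ker d_{\mathfrak g}\cap\ker\delta_{\mathfrak g})=E_0^k.
\]
The first was already observed just before Definition~\ref{def: E_0}, and the second follows by the same argument.

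The one point needing care is that the kernel of $\operatorname{Id}\otimes A$ equals $\mathcal C^\infty(G)\otimes\ker A$. To see this, expand $\alpha=\sum_j f_j\xi_j$ in a basis adapted to the splitting $\ker A\oplus(\ker A)^\perp$. Then $A$ is injective on the second summand, and the coefficients are uniquely determined, since $\{A\xi_j\}$ is linearly independent over $\mathbb R$ and hence over $\mathcal C^\infty(G)$ pointwise.

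With these identities, the decomposition of $\Omega^k(G)$ follows. Fibrewise orthogonality is inherited from Step 1 because the pointwise inner product is computed coefficientwise. Directness follows from the same basis argument, which is equivalently \eqref{eq: diff weight implies orthog}-style uniqueness of coordinates.

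\textbf{Step 3: the graded form.} Summing over $k$ gives \eqref{eq: Hodge decomp Box_0}, using \eqref{eq: Rumin forms as harmonic rep} to replace $E_0^\bullet$ by $\ker\Box_0$.

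There is no genuine obstacle here. The only subtle step is Step 2: justifying that kernels and images of the fibrewise operators are exactly the $\mathcal C^\infty(G)$-spans of the finite-dimensional ones. I would handle this with an orthonormal basis adapted to the three-term splitting, so that each projection $\operatorname{pr}_{\operatorname{Im}d_0}$, $\Pi_0$ and $\operatorname{pr}_{\operatorname{Im}\delta_0}$ acts coordinatewise. The identity $\operatorname{Id}=d_0d_0^{-1}+\Pi_0+d_0^{-1}d_0$ from Definition~\ref{def: Pi_0} then provides the decomposition explicitly.
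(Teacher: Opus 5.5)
Your proposal is correct and follows essentially the same route as the paper. Both reduce to the finite-dimensional orthogonal decomposition of $\bigwedge^k\mathfrak g^\ast$ given by $d_{\mathfrak g}$ and its adjoint. The paper splits $\Omega^k(G)=\ker d_0\oplus\operatorname{Im}\delta_0$ and then $\ker d_0=(\ker d_0\cap\ker\delta_0)\oplus\operatorname{Im}d_0$, whereas you identify $\ker d_{\mathfrak g}\cap\ker\delta_{\mathfrak g}$ as the orthogonal complement of $\operatorname{Im}d_{\mathfrak g}\oplus\operatorname{Im}\delta_{\mathfrak g}$. Your Step~2, which checks that kernels and images of $\operatorname{Id}\otimes A$ are the $\mathcal C^\infty(G)$-spans of the finite-dimensional ones, makes explicit the passage from fibres to modules that the paper leaves implicit in the phrase ``understood fibrewise''.
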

\begin{proof}
All the orthogonal complements in this proof are understood fibrewise, and the kernels and images are taken in the appropriate degrees. Since $d_0$ acts fibrewise as a linear map between finite-dimensional spaces, we have
$(\ker d_0)^\perp=\operatorname{Im}\delta_0$.
Consequently, for every degree $k$,
        \begin{align*}
            \Omega^k(G)=&\ker d_0\oplus(\ker d_0)^\perp=\ker d_0\oplus\operatorname{Im}\delta_0\\=&\ker d_0\cap\ker \delta_0\oplus\ker d_0\cap(\ker \delta_0)^\perp\oplus\operatorname{Im}\delta_0\\=&\ker\Box_0\oplus\ker d_0\cap\operatorname{Im}d_0\oplus\operatorname{Im}\delta_0=\ker\Box_0\oplus\operatorname{Im}d_0\oplus\operatorname{Im}\delta_0\,.
        \end{align*}
    \end{proof}

In order to keep the exposition as short as possible, we will directly express the Rumin differential $d_c$ directly in terms of the structure maps of the truncated multicomplex. The standard construction of $d_c$ is due to Rumin \cite{rumin_grenoble}, while the equivalence between that construction and the expression in Proposition \ref{prop: d_c on forms} below is proved in Lemma 2.19 of \cite{tripaldi2026spectralcomplexestruncatedmulticomplexes} or, equivalently, in Lemma 3.7 of \cite{magnani2026stokestheorempositivelygraded}.

\begin{definition}[The operators $\partial_r$]\label{def: operators curly}
Let $d_0,d_1,d_2,d_3$ be the components of the exterior differential introduced in Lemma \ref{lem: Lemma 2.7}. Since the de Rham complex $(\Omega^\bullet(G),d)$ is a truncated 3-multicomplex (see Proposition \ref{prop: truncated multicomplex}), let us set $d_r=0$ for every $r>0$. Using the partial inverse $d_0^{-1}$ of Definition \ref{def: partial inverse of d_0}, we define recursively the operators $\partial_r\colon\Omega^k(G)\to\Omega^{k+1}(G)$
by
  \begin{align}\label{eq: operators curly}
        \partial_1=d_1\ \text{ and }\ \partial_r=d_r-\sum_{j=1}^{r-1}d_{r-j}d_0^{-1}\partial_j\ \text{ for }r\ge 2\,.
    \end{align}
\end{definition}

Since $d_0^{-1}$ has bidegree $(0,-1)$, one can easily show that $\partial_r$ has bidegree $\lvert\partial_r\rvert=(r,1-r)$, that is\begin{align}\label{eq: bidegree partial r}
        \text{ for any }\alpha\in\Omega^{p,k-p}(G)\ \text{ we have that }\ \partial_r\alpha\in\Omega^{p+r,k+1-p-r}(G)\,.
    \end{align}
Thus, $\partial_r$ increases the weight by $r$ and the total degree by $1$.

From the explicit weight decomposition of forms on the Engel group, the maximum possible increase in weight between two consecutive form degrees is $4$, and so by \eqref{eq: bidegree partial r} we have that in this specific group
\[
\partial_r=0\ \text{ for every }r\geq 5\,.
\]
\begin{proposition}[Expressing $d_c$ on $E_0^\bullet$]\label{prop: d_c on forms}
For every $\alpha\in E_0^\bullet$, the Rumin differential is given by
\begin{align}\label{eq: formulation of d_c}
d_c\alpha
&=
\Pi_0\sum_{r=1}^{4}\partial_r\alpha
=
\sum_{r=1}^{4}\partial_r\alpha
-d_0d_0^{-1}\sum_{r=1}^{4}\partial_r\alpha
-d_0^{-1}d_0\sum_{r=1}^{4}\partial_r\alpha\,.
\end{align}
\end{proposition}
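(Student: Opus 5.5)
We prove the formula by comparing it with Rumin's construction of $d_c$. Recall that $d_c=\Pi_0\,d\,\Pi$ on $E_0^\bullet$. Here $\Pi$ is the unique map $E_0^\bullet\to\Omega^\bullet(G)$ such that, for $\alpha\in E_0^\bullet$, the form $\Pi\alpha$ satisfies $\Pi\alpha-\alpha\in\operatorname{Im}\delta_0=\operatorname{Im}d_0^{-1}$ and $d_0^{-1}d\,\Pi\alpha=0$.

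\textbf{Step 1: the ansatz for $\Pi$.} Look for $\Pi\alpha=\sum_{j\ge0}\beta_j$ with $\beta_0=\alpha$ and $\beta_j\in\Omega^{p+j,k-p-j}(G)\cap\operatorname{Im}d_0^{-1}$. We claim the natural solution is
\[
\beta_j=-d_0^{-1}\partial_j\alpha\quad (j\ge1).
\]
Expand $d\Pi\alpha=\sum_i\sum_j d_i\beta_j$ and sort by weight. The component of weight $p+r$ is $d_0\beta_r+\sum_{i=1}^{r}d_i\beta_{r-i}$. Applying $d_0^{-1}$ and using $d_0^{-1}d_0\beta_r=\beta_r$ (since $\beta_r\in\operatorname{Im}\delta_0$) turns the condition $d_0^{-1}d\Pi\alpha=0$ into the recursion
\[
\beta_r=-d_0^{-1}\sum_{i=1}^r d_i\beta_{r-i}.
\]

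\textbf{Step 2: matching with $\partial_r$.} Show by induction on $r$ that $\sum_{i=1}^r d_i\beta_{r-i}=\partial_r\alpha$. For $r=1$ the sum is $d_1\alpha=\partial_1\alpha$. For the inductive step, substitute $\beta_{r-i}=-d_0^{-1}\partial_{r-i}\alpha$ and reindex with $j=r-i$. This gives
\[
d_r\alpha-\sum_{j=1}^{r-1}d_{r-j}d_0^{-1}\partial_j\alpha,
\]
which is exactly \eqref{eq: operators curly}. Consequently
\[
d\Pi\alpha=\sum_r\Bigl(\partial_r\alpha+d_0\beta_r\Bigr)=\sum_r\bigl(\partial_r\alpha-d_0d_0^{-1}\partial_r\alpha\bigr).
\]

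\textbf{Step 3: applying $\Pi_0$.} We have $\Pi_0 d_0=0$, because $\Pi_0$ kills $\operatorname{Im}d_0$. Hence
\[
d_c\alpha=\Pi_0\,d\Pi\alpha=\Pi_0\sum_{r}\partial_r\alpha .
\]
The sum stops at $r=4$ because $\partial_r=0$ for $r\ge5$. Expanding $\Pi_0=\operatorname{Id}-d_0d_0^{-1}-d_0^{-1}d_0$ (Definition \ref{def: Pi_0}) gives the second expression in \eqref{eq: formulation of d_c}.

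\textbf{Main obstacle.} The delicate step is justifying Rumin's characterisation of $\Pi$, namely that the recursion defines the correct $\Pi\alpha$ and that $d_c=\Pi_0 d\Pi$. If we take Rumin's definition as given, uniqueness follows weight by weight: at each weight the condition determines the $\operatorname{Im}\delta_0$-component, because $d_0^{-1}d_0$ is the identity on $\operatorname{Im}\delta_0$. The recursion terminates because weights are bounded by $7$.
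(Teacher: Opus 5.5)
Your argument is correct. The paper gives no proof of this proposition; it only cites Lemma~2.19 of \cite{tripaldi2026spectralcomplexestruncatedmulticomplexes} (or Lemma~3.7 of \cite{magnani2026stokestheorempositivelygraded}) for the equivalence with Rumin's construction. So your proposal is a self-contained version of that cited step rather than an alternative to an argument in the text.

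Your route has three steps:
\begin{enumerate}
\item expand $\Pi\alpha$ by weight;
\item turn $d_0^{-1}d\,\Pi\alpha=0$ into the recursion $\beta_r=-d_0^{-1}\sum_{i=1}^r d_i\beta_{r-i}$;
\item recognise the recursion of Definition~\ref{def: operators curly}.
\end{enumerate}
This also shows that the homogeneous components of $\Pi\alpha$ are exactly $-d_0^{-1}\partial_r\alpha$, which explains where the operators $\partial_r$ come from. The citation, by contrast, gets the result from the general theory of truncated or positively graded multicomplexes.

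Two points deserve a line each in a written version.

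\emph{The characterisation of $\Pi$ on $E_0^\bullet$.} This is not an extra assumption. Rumin's space $E=\ker d_0^{-1}\cap\ker(d_0^{-1}d)$ satisfies $E\subseteq\ker d_0^{-1}=\ker\delta_0=E_0^\bullet\oplus\operatorname{Im}\delta_0$. Rumin's identity $\Pi_0\Pi\Pi_0=\Pi_0$ gives $\Pi_0\Pi\alpha=\alpha$ for $\alpha\in E_0^\bullet$. Together these yield $\Pi\alpha-\alpha\in\operatorname{Im}\delta_0$ directly. Your recursion then supplies existence, and uniqueness follows from the injectivity of $d_0^{-1}d$ on $\operatorname{Im}\delta_0$, via the lowest-weight argument you describe.

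\emph{Non-homogeneous $\alpha$.} Your computation treats $\alpha$ homogeneous of weight $p$. The general case follows by linearity, since $\Pi_0$ preserves weight and so $E_0^k$ splits into its homogeneous components.
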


\begin{definition}[The homogeneous components of $d_c$]\label{def: d_c^r}
For each $r=1,\ldots,4$, we define
\begin{align*}
d_c^r=\partial_r-d_0^{-1}d_0\partial_r-d_0d_0^{-1}\partial_r\,.
\end{align*}
The operator $d_c^r$ has bidegree $(r,1-r)$ and therefore restricts to a map
\[
d_c^r\colon
E_0^k\cap\Omega^{p,k-p}(G)
\longrightarrow
E_0^{k+1}\cap\Omega^{p+r,k+1-p-r}(G).
\]
Moreover, with this notation,
\[
d_c=\sum_{r=1}^{4}d_c^r.
\]
\end{definition}

\subsubsection{The Rumin complex in the Engel group}\label{subsection: Rumin complex in Engel}Let us focus our attention on the case of the Engel group. Given the explicit action of the algebraic differential $d_0$ as the differential $d$ acting on left-invariant forms
\begin{align*}
    d\theta_1=d\theta_2=0\ ,\ d\theta_3=-\theta_1\wedge\theta_2\ ,\ d\theta_4=-\theta_1\wedge\theta_3\,,
\end{align*}
one readily obtains an explicit description of the space of smooth forms through the direct sum decomposition \eqref{eq: Hodge decomp Box_0} in each degree.
\begin{itemize}
    \item $\Omega^0(G)=\mathcal{C}^\infty(G)=\ker\Box_0\cap\Omega^0(G)$;
    \item $\Omega^1(G)=(\ker\Box_0\oplus\operatorname{Im}\delta_0)\cap\Omega^1(G)$, where $\ker\Box_0\cap\Omega^1(G)=\Omega^{1,0}(G)=\operatorname{span}_{\mathcal{C}^\infty(G)}\{\theta_1,\theta_2\}$, $\operatorname{Im}\delta_0\cap\Omega^1(G)=\operatorname{span}_{\mathcal{C}^\infty(G)}\{\theta_3,\theta_4\}$, and $\operatorname{Im}d_0\cap\Omega^1(G)=0$;
    \item $\Omega^2(G)=(\operatorname{Im}d_0\oplus\ker\Box_0\oplus\operatorname{Im}\delta_0)\cap\Omega^2(G)$, where $\ker\Box_0\cap\Omega^2(G)=\operatorname{span}_{\mathcal{C}^\infty(G)}\{\theta_2\wedge\theta_3,\theta_1\wedge\theta_4\}$, $\operatorname{Im}d_0\cap\Omega^2(G)=\operatorname{span}_{\mathcal{C}^\infty(G)}\{\theta_1\wedge\theta_2,\theta_1\wedge\theta_3\}$, and $\operatorname{Im}\delta_0\cap\Omega^2(G)=\operatorname{span}_{\mathcal{C}^\infty(G)}\{\theta_2\wedge\theta_4,\theta_3\wedge\theta_4\}$;
    \item $\Omega^3(G)=(\ker\Box_0\oplus\operatorname{Im}d_0)\cap\Omega^3(G)$, where $\ker\Box_0\cap\Omega^3(G)=\operatorname{span}_{\mathcal{C}^\infty(G)}\{\theta_1\wedge\theta_3\wedge\theta_4,\theta_2\wedge\theta_3\wedge\theta_4\}$, $\operatorname{Im}d_0\cap\Omega^3(G)=\operatorname{span}_{\mathcal{C}^\infty(G)}\{\theta_1\wedge\theta_2\wedge\theta_3,\theta_1\wedge\theta_2\wedge\theta_4\}$, and $\operatorname{Im}\delta_0\cap\Omega^3(G)=0$;
    \item $\Omega^4(G)=\ker\Box_0\cap\Omega^4(G)=\Omega^{7,-3}(G)=\operatorname{span}_{\mathcal{C}^\infty(G)}\{\theta_1\wedge\theta_2\wedge\theta_3\wedge\theta_4\}$.
\end{itemize}
Given this characterisation of the space of smooth forms, together with \eqref{eq: Rumin forms as harmonic rep}, we have that
\begin{itemize}
    \item $E_0^0=\mathcal{C}^\infty( G)=\Omega^{0,0}(G)$;
    \item $E_0^1=\mathrm{span}_{\mathcal C^\infty( G)}\lbrace\theta_1,\theta_2\rbrace=\Omega^{1,0}(G)$;
    \item $E_0^2=\mathrm{span}_{\mathcal C^\infty( G)}\lbrace \theta_2\wedge\theta_3,\theta_1\wedge\theta_4\rbrace\subset\Omega^{3,-1}(G)\oplus\Omega^{4,-2}(G)$;
    \item $E_0^3=\mathrm{span}_{\mathcal C^\infty(G)}\lbrace \theta_1\wedge\theta_3\wedge\theta_4,\theta_2\wedge\theta_3\wedge\theta_4\rbrace=\Omega^{6,-3}(G)$;
    \item $E_0^4=\mathrm{span}_{\mathcal C^\infty( G)}\lbrace \theta_1\wedge\theta_2\wedge\theta_3\wedge\theta_4\rbrace=\Omega^{7,-3}(G)$.
\end{itemize}
By studying simply the difference in weight and relating it to the bidegree of the operators $d_c^r$, we get that
\begin{itemize}
    \item $d_c\colon E_0^0\longrightarrow E_0^1$ is given by $d_c=d_c^1$;
    \item $d_c\colon E_0^1\longrightarrow E_0^2$ is given by $d_c=d_c^2+d_c^3$;
    \item $d_c\colon E_0^2\longrightarrow E_0^3$ is given by $d_c=d_c^2+d_c^3$;
    \item $d_c\colon E_0^3\longrightarrow E_0^4$ is given by $d_c=d_c^1$.
\end{itemize}
Furthermore, once we apply the expressions \eqref{eq: operators curly} to the formulae in Definition \ref{def: d_c^r}, it is a straightforward computation to find the action of the Rumin differentials.
\begin{itemize}
    \item $k=0$: given an arbitrary 0-form $f\in E_0^0$, we have 
    \begin{align*}
        d_cf=d_c^1f=d_1f-d_0d_0^{-1}d_1f-d_0^{-1}d_0d_1f=d_1f=X_1f\theta_1+X_2f\theta_2\,.
    \end{align*}
    \item $k=1$: given an arbitrary Rumin 1-form $\alpha=f_1\theta_1+f_2\theta_2\in E_0^1$, we have
    \begin{align*}
        d_c\alpha=&d_c^2\alpha+d_c^3\alpha=(\partial_2-d_0d_0^{-1}\partial_2-d_0^{-1}d_0\partial_2)\alpha+(\partial_3-d_0d_0^{-1}\partial_3-d_0^{-1}d_0\partial_3)\alpha\\=&
\left[
X_2\bigl(X_1f_2-X_2f_1\bigr)-X_3f_2
\right]\theta_2\wedge\theta_3+
\left[
X_1^2\bigl(X_1f_2-X_2f_1\bigr)-X_1X_3f_1
-X_4f_1
\right]\theta_1\wedge\theta_4.
    \end{align*}
    \item $k=2$: given an arbitrary Rumin 2-form $\alpha=f_1\theta_2\wedge\theta_3+f_2\theta_1\wedge\theta_4\in E_0^2$, we have
    \begin{align*}
        d_c\alpha=&d_c^2(f_2\theta_1\wedge\theta_4)+d_c^3(f_1\theta_2\wedge\theta_3)=-(X_1X_2f_2+X_3f_2)\theta_1\wedge\theta_3\wedge\theta_4-X_2^2f_2\theta_2\wedge\theta_3\wedge\theta_4\\&+X_1^3f_1\theta_1\wedge\theta_3\wedge\theta_4+(X_2X_1^2f_1-X_3X_1f_1+X_4f_1)\theta_2\wedge\theta_3\wedge\theta_4\,.
    \end{align*}
    \item $k=3$: given an arbitrary Rumin 3-form $\alpha=f_1\theta_1\wedge\theta_3\wedge\theta_4+f_2\theta_2\wedge\theta_3\wedge\theta_4\in E_0^3$, we have
    \begin{align*}
        d_c\alpha=&d_c^1\alpha=\left(X_1f_2-X_2f_1\right)
\theta_1\wedge\theta_2\wedge\theta_3\wedge\theta_4\,.
    \end{align*}
\end{itemize}
Adopting the pictorial convention commonly used for differential complexes on Carnot groups, the Rumin complex on the Engel group can be represented by the following diagram. The horizontal coordinate records the degree of a form, which increases from left to right, while the vertical coordinate records its weight, which increases from top to bottom. Thus, the diagram begins in the upper-left corner with the space $E_0^0=\Omega^{0,0}(G)$ of $0$-forms of weight $0$. Using this convention, the diagram represents the Rumin complex on the Engel group, where only the non-trivial homogeneous components $E_0^\bullet\cap\Omega^{p,k-p}(G)$ are included, and the arrows indicate the operators $d_c^r$ which are non-zero when acting on the respective spaces of Rumin forms.
\begin{align*}
    \begin{picture}(450,280)(0,0)
   \put(  0,  0){\line( 1, 0){450}}
   \put(  0,  0){\line( 0, 1){280}}
   \multiput(  0, 35)( 0,35){8}{\line( 1, 0){450}}
   \multiput( 90,  0)(90, 0){5}{\line( 0, 1){280}}
   \put(45,262){\makebox(0,0){$ \Omega^{0,0}(G)$}}
   \put(135,227){\makebox(0,0){$ \Omega^{1,0}(G)$}}
   \put(225,157){\makebox(0,0){$E_0^2\cap\Omega^{3,-1}(G)$}}
   \put(225,122){\makebox(0,0){{ $E_0^{2}\cap\Omega^{4,-2}(G)$}}}
   \put(315,52){\makebox(0,0){{ $\Omega^{6,-3}(G)$}}}
   \put(405,17){\makebox(0,0){{ $\Omega^{7,-3}(G)$}}}
   \put(60,262){\vector(1,-0.5){60}}
   \put(150,227){\vector(1,-0.9){60}}
   \put(140,220){\vector(1,-1.65){55}}
   \put(262,155){\vector(1,-1.95){48}}
   \put(240,113){\vector(1,-0.9){60}}
   \put(334,50){\vector(1,-0.5){55}}
  \end{picture} 
\end{align*}

\subsection{Spectral complexes on the Engel group}

In this subsection, we focus on the spectral complexes associated with the truncated $3$-multicomplex $
\bigl(\Omega^\bullet(G),d_0,d_1,d_2,d_3\bigr)$ arising from the de Rham complex of the Engel group. We begin by recalling the $\mathbb R$-submodules $Z_r^{p,k-p}$ and $B_r^{p,k-p}$ whose quotients represent the terms of the associated spectral sequence and constitute the building blocks of the spectral complexes. In view of the applications developed in the subsequent sections, we present both their original description in terms of the structure maps $d_i$ and their alternative characterisation in terms of Rumin forms and the homogeneous components $d_c^r$ of the Rumin differential given in \cite{tripaldi2026spectralcomplexestruncatedmulticomplexes}.

\begin{definition}[Adapted from Definition 2.6 in \cite{livernet2020spectral}]\label{def: Z and B defined}
Let $\alpha\in\Omega^{p,k-p}(G)$ and let $r\ge 1$. We define the bigraded $\mathbb{R}$-submodules $Z_r^{p,k-p}$ and $B_r^{p,k-p}$ of $\Omega^{p,k-p}(G)$ as follows:
\begin{align*}
    \alpha\in Z_r^{p,k-p}\ \Longleftrightarrow&\ \text{for }1\le j\le r-1\,,\text{ there exists }z_{p+j}\in\Omega^{p+j,k-p-j}(G)\text{ such that}\\&\ d_0\alpha=0\text{ and }d_n\alpha=\sum_{i=0}^{n-1}d_iz_{p+n-i}\text{ for all }1\le n\le r-1\,.\\
    \alpha\in B_r^{p,k-p}\ \Longleftrightarrow&\ \text{for }0\le j\le r-1\text{ there exists }c_{p-j}\in\Omega^{p-j,k-1-p+j}(G)\text{ such that}\\&\ \alpha=\sum_{j=0}^{r-1}d_jc_{p-j}\text{ and }0=\sum_{j=l}^{r-1}d_{j-l}c_{p-j}\text{ for }1\le l\le r-1\,.
\end{align*}
For $r=1$, the families indexed by $j=1,\ldots,r-1$ and the corresponding compatibility conditions are understood to be empty.
\end{definition}

\begin{remark}\label{rmk: Z_1 and B_1}
For $r=1$, Definition \ref{def: Z and B defined} reduces to
\begin{align*}
Z_1^{p,k-p}
&=
\ker d_0\cap\Omega^{p,k-p}(G)\ \text{ and }\
B_1^{p,k-p}
=
\operatorname{Im}d_0\cap\Omega^{p,k-p}(G)\,.
\end{align*}
More explicitly, if $\alpha\in\Omega^{p,k-p}(G)$, then
\begin{align*}
\alpha\in Z_1^{p,k-p}
 &\ \Longleftrightarrow\ 
d_0\alpha=0\
\Longleftrightarrow
\ \alpha=d_0\beta_p+\Pi_0\alpha
\ \text{ for some }
\beta_p\in\Omega^{p,k-1-p}(G)\,.
\end{align*}Similarly,
$$
\alpha\in B_1^{p,k-p}
\Longleftrightarrow
\alpha=d_0c_p
\ \text{ for some }
c_p\in\Omega^{p,k-1-p}(G)\,.
$$

The fibrewise Hodge decomposition therefore gives the orthogonal direct sum
$$
Z_1^{p,k-p}
=
B_1^{p,k-p}
\oplus
\bigl(E_0^k\cap\Omega^{p,k-p}(G)\bigr)\,,
$$
and hence a natural identification
$
Z_1^{p,k-p}/B_1^{p,k-p}
\cong
E_0^k\cap\Omega^{p,k-p}(G)
$.

\end{remark}

Using the Hodge decomposition associated with the algebraic Laplacian $
\Box_0=d_0\delta_0+\delta_0d_0,
$
the submodules $Z_r^{p,k-p}$ and $B_r^{p,k-p}$ can also be characterised in terms of Rumin forms and the homogeneous components $d_c^r$ of the Rumin differential. These characterisations are given by Propositions 3.4 and 3.6 in \cite{tripaldi2026spectralcomplexestruncatedmulticomplexes} and will be recalled below.

\begin{proposition}[Characterising $Z_r^{p,k-p}$ and $B_r^{p,k-p}$ via the Rumin complex]\label{prop: Z and B in terms of d_c} Let $r\ge 2$ and let $\alpha\in\Omega^{p,k-p}(G)$. The condition $\alpha\in Z_r^{p,k-p}$ is equivalent to saying that $\alpha=d_0\beta_p+\Pi_0\alpha$ for some $\beta_p\in\Omega^{p,k-1-p}(G)$ and there exist $\omega_{p+i}\in\Omega^{p+i,k-p-i}(G)\cap\ker\Box_0$ with $i=1,\ldots,r-2$ with \begin{align*}
    d_c^i\Pi_0\alpha=\sum_{j=1}^{i-1}d_c^{i-j}\omega_{p+j}\ \text{ for each }i=1,\ldots,r-1\,.
\end{align*}
On the other hand, the condition $\alpha\in B_r^{p,k-p}$ is equivalent to saying that there exists $c_{p-r+1}\in Z_{r-1}^{p-(r-1),k-2-p+r}$, that is $c_{p-r+1}=d_0\beta_{p-r+1}+\Pi_0c_{p-r+1}$ for some $\beta_{p-r+1}\in\Omega^{p-r+1,k-3-p+r}(G)$ and $\omega_{p-r+i}\in\Omega^{p-r+i,k-1-p+r-i}(G)\cap\ker\Box_0$ with $i=2,\ldots,r-1$ such that
\begin{align*}
    d_c^i\Pi_0c_{p-r+1}=\sum_{j=1}^{i-1}d_c^{i-j}\omega_{p-r+1+j}\ \text{ for each }i=1,\ldots,r-2,
\end{align*}
such that
\begin{align*}
    \alpha\equiv
d_c^{r-1}\Pi_0c_{p-r+1}
-\sum_{i=1}^{r-2}d_c^{r-1-i}\omega_{p-r+1+i}
\pmod{\operatorname{Im}d_0\cap\Omega^{p,k-p}(G)}\,.
\end{align*}
    We recall that in the case of the Engel group, since $\partial_r=0$ for $r\ge 5$, we have $d_c^r=0$ for the same range.
\end{proposition}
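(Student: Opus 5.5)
The plan is to argue by induction on $r$, comparing the defining system of Definition \ref{def: Z and B defined} with the Rumin-type system term by term. The basic tool is the fibrewise Hodge decomposition \eqref{eq: Hodge decomp Box_0}, which lets us write any auxiliary form as $z=d_0 a+\Pi_0 z+\delta_0 b$. The condition $d_0\alpha=0$ is then equivalent to $\alpha=d_0\beta_p+\Pi_0\alpha$, exactly as in Remark \ref{rmk: Z_1 and B_1}.

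\textbf{The $Z_r$ statement.} Given $\alpha\in Z_r^{p,k-p}$ with auxiliary forms $z_{p+j}$, we normalise these forms weight by weight. The $d_0$-exact part of $z_{p+j}$ can be absorbed: replacing $z$ by $z-d_0 a$ changes the right-hand sides only through terms $d_i d_0 a$. These terms can be rewritten using the multicomplex identities \eqref{eq: structure maps multicomplex}, at the cost of redefining higher $z$'s. The $\operatorname{Im}\delta_0$ part of $z_{p+j}$ is forced by the equations. Indeed, the weight-$(p+n)$ equation $d_n\alpha-\sum_{i\ge1}d_iz_{p+n-i}=d_0z_{p+n}$ says that the left-hand side lies in $\operatorname{Im}d_0$, and then $d_0^{-1}$ determines the $\operatorname{Im}\delta_0$ component of $z_{p+n}$ uniquely. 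Substituting recursively, the left-hand side becomes $\partial_n$ applied to $\Pi_0\alpha$, minus the $\partial$'s applied to the harmonic parts $\omega_{p+j}:=\Pi_0 z_{p+j}$. This is precisely the recursion \eqref{eq: operators curly}. The condition that this expression lie in $\operatorname{Im}d_0$ is equivalent to the vanishing of its $\Pi_0$ and $\operatorname{Im}\delta_0$ components. Using Definition \ref{def: d_c^r}, the $\Pi_0$ component is exactly $d_c^n\Pi_0\alpha-\sum_j d_c^{n-j}\omega_{p+j}$. For the converse direction, we define $z_{p+n}:=d_0^{-1}(\cdots)+\omega_{p+n}$ and check the equations.

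\textbf{The main obstacle.} The hardest step is showing that the $\operatorname{Im}\delta_0$ component vanishes automatically once the lower equations hold. The plan is to apply $d_0$ to the weight-$(p+n)$ expression and use $\sum_{i+j=m}d_id_j=0$ to show that it is $d_0$-closed. A $d_0$-closed form has no $\operatorname{Im}\delta_0$ component, since $(\operatorname{Im}\delta_0)\cap\ker d_0=0$. This is the standard argument that $d_c$ is well defined, as in Lemma 2.19 of \cite{tripaldi2026spectralcomplexestruncatedmulticomplexes}, and we will invoke that computation. The same closedness also shows that the last auxiliary form $\omega_{p+r-1}$ is not needed, which explains why only $i\le r-2$ appears.

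\textbf{The $B_r$ statement.} We reindex the boundary conditions of Definition \ref{def: Z and B defined}. The equations $0=\sum_{j=l}^{r-1}d_{j-l}c_{p-j}$ for $1\le l\le r-1$ say exactly that $c_{p-r+1}\in Z_{r-1}$, with auxiliary forms $-c_{p-r+1+j}$. Applying the $Z_{r-1}$ characterisation just proved turns these into the stated $d_c$ conditions, with $\omega$'s given by the harmonic parts of the $c$'s. Finally, $\alpha=\sum_j d_jc_{p-j}$ is computed modulo $\operatorname{Im}d_0$ by the same recursive substitution. This identifies $\alpha$ with $\partial_{r-1}\Pi_0 c_{p-r+1}-\sum_i\partial_{r-1-i}\omega_{p-r+1+i}$ modulo $\operatorname{Im}d_0$. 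Since this expression is $d_0$-closed, its $\operatorname{Im}\delta_0$ component vanishes, and we may replace $\partial$ by $d_c$. The converse again follows by reconstructing the $c$'s through $d_0^{-1}$.
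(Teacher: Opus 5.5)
The paper does not prove this proposition itself. It imports it from Propositions 3.4 and 3.6 of \cite{tripaldi2026spectralcomplexestruncatedmulticomplexes}, so there is no internal argument to compare with. Your proposal is a correct, self-contained proof.

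The core identity checks out. Once the $d_0$-exact parts have been gauged away, the witnesses are forced to be $z_{p+n}=\omega_{p+n}+d_0^{-1}Y_n$, where
\[
Y_n=\partial_n\Pi_0\alpha-\sum_{m=1}^{n-1}\partial_{n-m}\omega_{p+m}\,.
\]
This follows by induction directly from the recursion \eqref{eq: operators curly}. The closedness you single out as the main obstacle is immediate. By the lower equations, $d\bigl(\Pi_0\alpha-\sum_{j<n}z_{p+j}\bigr)$ lies in weights $\ge p+n$, and $Y_n$ is its weight-$(p+n)$ component. Since this form is exact, its lowest-weight component is $d_0$-closed. So $d^2=0$ gives it in one line, and you do not need to invoke Lemma 2.19 of the cited work. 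Hence $Y_n\in\operatorname{Im}d_0$ if and only if $\Pi_0Y_n=d_c^n\Pi_0\alpha-\sum_m d_c^{n-m}\omega_{p+m}=0$. The $B_r$ case reduces to this, because $\alpha$ agrees modulo $\operatorname{Im}d_0$ with the quantity $Y_{r-1}$ computed for $\Pi_0c_{p-r+1}$ with witnesses $-c_{p-r+1+j}$.

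A few bookkeeping points should be made explicit when you write this up.
\begin{enumerate}
\item The $d_0\beta_p$ part of $\alpha$ itself must also be gauged away. Replace $\alpha$ by $\Pi_0\alpha$ and $z_{p+m}$ by $z_{p+m}+d_m\beta_p$. Otherwise the left-hand side is not literally $\partial_n\Pi_0\alpha-\cdots$.
\item The normalisation is genuinely sequential. Removing $d_0a$ from $z_{p+j}$ shifts each $z_{p+j+m}$ by $-d_ma$, which changes their harmonic parts. So each $\omega_{p+j}$ must be read off only after all lower weights have been normalised.
\item $\omega_{p+r-1}$ drops out simply because $z_{p+r-1}$ enters the system only through $d_0z_{p+r-1}$, which does not see its harmonic part. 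Closedness plays no role there.
\item In the $B_r$ part, the $\omega$'s are the harmonic parts of the auxiliary forms $-c_{p-r+1+j}$, not of the $c$'s themselves. This is what produces the minus sign in the final congruence.
\end{enumerate}
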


Using the graded submodules $Z_r^{p,k-p}$ and $B_r^{p,k-p}$, it is possible to have an explicit formulation of the differentials arising at each page of the spectral sequence. Again, using Proposition \ref{prop: Z and B in terms of d_c}, it is possible to express them using the Rumin differentials $d_c^r$. We refer to Theorem 2.10 in \cite{livernet2020spectral} and Propositions 3.7 and 3.8 in \cite{tripaldi2026spectralcomplexestruncatedmulticomplexes} for the proofs of the two formulas in the following proposition.
\begin{proposition}\label{prop: differentials Delta_r}Let $r\ge 1$. The $r^{th}$ differential of the spectral sequence corresponds to the map
\begin{align*}
    \Delta_r\colon Z_r^{p,k-p}/B_r^{p,k-p}\longrightarrow Z_r^{p+r,k+1-p-r}/B_r^{p+r,k+1-p-r}\ , \ \Delta_r\left([\alpha]\right)=\left[d_r\alpha-\sum_{i=1}^{r-1}d_iz_{p+r-i}\right]
    \end{align*}
    where $\alpha\in Z_r^{p,k-p}$ and the family $\lbrace z_{p+j}\rbrace_{1\le j\le r-1}$ satisfies the equations of Definition \ref{def: Z and B defined}.

    The same operator can be expressed as
    \begin{align}
        \Delta_r\left([\alpha]\right)=\left[d_c^r\Pi_0{\alpha}-\sum_{i=2}^{r-1}d_c^{i}{\omega}_{p+r-i}\right]\,,
    \end{align}
    where ${\omega}_{p+r-i}\in\ker\Box_0\cap\Omega^{p+r-i,k-p-r+i}(G)$ are taken to satisfy $d_c^j\Pi_0{\alpha}-\sum_{i=1}^{j-1}d_c^i{\omega}_{p+j-i}=0$ for $j=1,\ldots,r-1$, as presented in Proposition \ref{prop: Z and B in terms of d_c}.
    
\end{proposition}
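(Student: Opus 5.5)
The plan is to verify the two formulas for $\Delta_r$ directly from Definition \ref{def: Z and B defined} and Proposition \ref{prop: Z and B in terms of d_c}. The argument has three parts: well-definedness of the first expression, its landing in $Z_r^{p+r,k+1-p-r}$, and its agreement with the Rumin expression.

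\emph{Step 1: the first formula.} Take $\alpha\in Z_r^{p,k-p}$ with witnesses $z_{p+1},\dots,z_{p+r-1}$. Set
\[
\gamma:=d_r\alpha-\sum_{i=1}^{r-1}d_iz_{p+r-i}.
\]
First show $\gamma\in Z_r^{p+r,k+1-p-r}$ by producing explicit witnesses. Apply $d=\sum_i d_i$ to $\alpha-\sum_j z_{p+j}$ and use the multicomplex relations \eqref{eq: structure maps multicomplex}. The defining equations of $Z_r$ say exactly that this combination has vanishing components in weights $p,\dots,p+r-1$, and its weight-$(p+r)$ component is $\gamma$. Write
\[
d\Bigl(\alpha-\sum_j z_{p+j}\Bigr)=\gamma+\gamma',
\]
where $\gamma'$ collects the components of weight at least $p+r+1$. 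Since $d^2=0$ and the weight splitting is direct (Lemma \ref{lem: red lemma}), decomposing $d(\gamma+\gamma')=0$ by weight gives $d_0\gamma=0$. It also gives
\[
d_n\gamma=-\sum_{i<n} d_i\gamma'_{p+r+n-i},
\]
so the weight components $z'_{p+r+j}:=-\gamma'_{p+r+j}$ are witnesses for $\gamma\in Z_r$.

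Next show independence of choices. If $z$ and $\tilde z$ are two families of witnesses, their difference satisfies a homogeneous triangular system. The resulting difference of the two values of $\gamma$ then has the shape required by Definition \ref{def: Z and B defined}: it is $\sum_j d_j c_{p+r-j}$ with the compatibility conditions $0=\sum d_{j-l}c$. Hence it lies in $B_r^{p+r,\cdot}$. Finally, for $\alpha\in B_r^{p,k-p}$ one takes the $z$'s built from the $c$'s and finds that $\gamma\in B_r$, which uses the inclusion $B_r\subset Z_r$ of the spectral sequence. This bookkeeping reproduces Theorem 2.10 of \cite{livernet2020spectral}, and I will follow it index by index.

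\emph{Step 2: the second formula.} Translate via the Hodge decomposition \eqref{eq: Hodge decomp Box_0} using the recursion \eqref{eq: operators curly}. Decompose each witness as
\[
z_{p+j}=d_0(\cdot)+\omega_{p+j}+\delta_0(\cdot),\qquad \omega_{p+j}\in\ker\Box_0 .
\]
Solving the equations of Definition \ref{def: Z and B defined} weight by weight, the $\operatorname{Im}\delta_0$ parts are forced to equal $d_0^{-1}$ applied to the previous $\partial$-terms. Substituting these shows that the expression $d_r\alpha-\sum d_iz$ equals
\[
\partial_r\Pi_0\alpha-\sum_i\partial_i\omega_{p+r-i}
\]
modulo $\operatorname{Im}d_0$ and $\operatorname{Im}\delta_0$ contributions. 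Applying $\Pi_0$, and noting that the $\operatorname{Im}d_0$ part lies in $B_1\subset B_r$, yields the $d_c^r$ expression, where the $i=1$ term is absorbed by the constraint equations. This is the content of Lemma 2.19 and Propositions 3.7 and 3.8 in \cite{tripaldi2026spectralcomplexestruncatedmulticomplexes}, specialised to $r\le4$ in the Engel case.

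\emph{Main obstacle.} I expect the hardest part to be the index bookkeeping in Step 2. One must check that the $\operatorname{Im}\delta_0$ components are uniquely determined by $d_0^{-1}$, and that the leftover terms genuinely lie in $B_r$ rather than only in $\operatorname{Im}d_0$. I would handle this by induction on $r$, using $d_0^{-1}d_0=\operatorname{pr}_{\operatorname{Im}\delta_0}$.
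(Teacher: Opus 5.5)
Your outline is correct and follows the same route as the paper. The paper gives no argument of its own here and simply defers to Theorem 2.10 of \cite{livernet2020spectral} and Propositions 3.7--3.8 of \cite{tripaldi2026spectralcomplexestruncatedmulticomplexes}. Your Step 1 faithfully reconstructs the first of these: witnesses for $\gamma$ come from the weight components of $d(\gamma+\gamma')=0$, differences of witness families produce elements of $B_r$, and $B_r$ is mapped into $B_r$.

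Two justifications in Step 2 need correcting.

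First, the $i=1$ term $d_c^1\omega_{p+r-1}$ is not absorbed by the constraint equations. Those equations involve only $\omega_{p+1},\dots,\omega_{p+r-2}$, and the harmonic part $\omega_{p+r-1}$ of $z_{p+r-1}$ is completely free. The term drops out for a different reason. Since $d_0\omega_{p+r-1}=0$, the multicomplex relation gives $d_0d_1\omega_{p+r-1}=0$, so $d_c^1\omega_{p+r-1}\equiv d_1\omega_{p+r-1}\pmod{\operatorname{Im}d_0}$. Moreover $d_1\omega_{p+r-1}\in B_2\subseteq B_r$: take $c=\omega_{p+r-1}$ in Definition \ref{def: Z and B defined}. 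Alternatively, simply choose $\omega_{p+r-1}=0$.

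Second, the worry in your last paragraph is reversed. Anything in $\operatorname{Im}d_0$ in that bidegree already lies in $B_1\subseteq B_r$, so discarding it is harmless. What actually licenses applying $\Pi_0$ is that the $\operatorname{Im}\delta_0$ component of $\gamma$ vanishes, and this is exactly the identity $d_0\gamma=0$ you proved in Step 1.

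Finally, the statement requires the direction from an arbitrary Rumin family $\{\omega_{p+j}\}$ back to $d$-witnesses. You get it by setting $z_{p+m}:=\omega_{p+m}+d_0^{-1}w_m$, where $w_m=\partial_m\Pi_0\alpha-\sum_{s<m}\partial_s\omega_{p+m-s}$. This works because $w_m\in\operatorname{Im}d_0$: the condition $\Pi_0w_m=0$ is the Rumin constraint, and $d_0w_m=0$ is automatic by the same weight argument as in your Step 1.
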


As observed in Remark \ref{rmk: Z_1 and B_1}, the first page of the spectral sequence is naturally isomorphic to the space of Rumin forms. More precisely, the fibrewise scalar product and the Hodge decomposition \eqref{eq: Hodge decomp Box_0} give
\begin{align*}
E_1^{p,k-p}=Z_1^{p,k-p}/B_1^{p,k-p}
\cong
Z_1^{p,k-p}\cap
\left(B_1^{p,k-p}\right)^\perp=
E_0^k\cap\Omega^{p,k-p}(G).
\end{align*}

For each bidegree $(p,k-p)$, define
\begin{align}\label{eq: admissible orders}
I_{(p,k)}
:=
\left\{
j\geq 1\mid
\left.d_c^j\right|_{E_0^k\cap\Omega^{p,k-p}(G)}
\not\equiv 0
\right\}\,.
\end{align}
Then
\begin{align*}
\left.d_c\right|_{E_0^k\cap\Omega^{p,k-p}(G)}
=
\sum_{j\in I_{(p,k)}}d_c^j\colon E_0^{k}\cap\Omega^{p,k-p}(G)\longrightarrow\bigoplus_{j\in I_{(p,k)}}E_0^{k+1}\cap\Omega^{p+j,k+1-p-j}(G)\,.
\end{align*}
In the case of the Engel group, $I_{(p,k)}\subseteq\{1,2,3\}$.

Before introducing the spectral complexes, we recall the following inclusions.

\begin{lemma}[Lemma 4.1 in \cite{tripaldi2026spectralcomplexestruncatedmulticomplexes}]
\label{lem: inclusions B and Z}
For every bidegree \((p,q)\), the submodules \(B_r^{p,q}\) form an increasing family, whereas the submodules \(Z_r^{p,q}\) form a decreasing family. Moreover,
\begin{align}\label{eq: B contained in Z}
B_l^{p,q}\subseteq Z_j^{p,q}
\
\text{ for every }j,l\ge 1\,.
\end{align}
In particular, the quotient \(Z_j^{p,q}/B_l^{p,q}\) is well defined for arbitrary positive integers \(j\) and \(l\).
\end{lemma}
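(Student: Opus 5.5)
We work directly from Definition~\ref{def: Z and B defined} and prove three things in order: the $B_r$ increase, the $Z_r$ decrease, and $B_l\subseteq Z_j$ for all $j,l$. The last is the substantive part.

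\emph{Monotonicity of $B_r$.} Take $\alpha\in B_r^{p,q}$ with witnesses $c_p,\ldots,c_{p-r+1}$, and set $c_{p-r}:=0$. The identity $\alpha=\sum_{j=0}^{r}d_jc_{p-j}$ still holds. Each constraint $0=\sum_{j=l}^{r}d_{j-l}c_{p-j}$ for $1\le l\le r-1$ is the old one plus the vanishing term $d_{r-l}c_{p-r}$. The new constraint for $l=r$ reads $d_0c_{p-r}=0$, which is trivial. Hence $\alpha\in B_{r+1}^{p,q}$.

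\emph{Monotonicity of $Z_r$.} If $\alpha\in Z_{r+1}^{p,q}$ with witnesses $z_{p+1},\ldots,z_{p+r}$, then discarding $z_{p+r}$ and the equation for $n=r$ gives $\alpha\in Z_r^{p,q}$.

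\emph{The inclusion $B_l\subseteq Z_j$.} By the two monotonicity statements it suffices to show $B_l^{p,q}\subseteq Z_j^{p,q}$ for every $j\ge l$, and in fact for $j$ arbitrarily large. Indeed, once $B_l\subseteq Z_j$ holds for all large $j$, it holds for all $j$, because $Z$ decreases.

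Fix $\alpha=\sum_{j=0}^{l-1}d_jc_{p-j}$ with the compatibility relations $\sum_{j=m}^{l-1}d_{j-m}c_{p-j}=0$ for $1\le m\le l-1$. We must exhibit $z_{p+s}$ with $d_0\alpha=0$ and
\[
d_n\alpha=\sum_{i=0}^{n-1}d_iz_{p+n-i}\quad\text{for all }n\ge1.
\]
The natural guess uses $d\circ d=0$ on the total element $c:=\sum_j c_{p-j}$. The $B$-relations say precisely that the components of $dc$ of weight below $p$ vanish. Hence
\[
dc=\alpha+\sum_{s\ge1}y_{p+s}
\]
for some $y_{p+s}\in\Omega^{p+s,\,q-s}(G)$. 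Applying $d$ and sorting by weight via Lemma~\ref{lem: red lemma} (exactly as in the proof of Proposition~\ref{prop: truncated multicomplex}) gives $d_0\alpha=0$ in weight $p$. In weight $p+n$ it gives
\[
d_n\alpha+\sum_{i=0}^{n-1}d_iy_{p+n-i}=0 .
\]
So $z_{p+s}:=-y_{p+s}$ satisfies every $Z_j$-equation simultaneously. Because the multicomplex is truncated ($d_i=0$ for $i>3$), only finitely many $y$'s are nonzero, and the construction works for all $j$ at once.

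The main point to check is the bookkeeping of the sign and index conventions. One must confirm that the relations in Definition~\ref{def: Z and B defined} are literally the weight components of $dc$ in weights $p-l+1,\ldots,p-1$, and that $\alpha$ is its weight-$p$ component. The other point to check is that the degrees of the $y_{p+s}$ match those required of the $z_{p+s}$. Both follow from $|d_i|=(i,1-i)$.

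Finally, since $B_l\subseteq Z_j$ and both are $\mathbb R$-subspaces of $\Omega^{p,q}(G)$, the quotient $Z_j^{p,q}/B_l^{p,q}$ is well defined.
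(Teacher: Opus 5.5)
Your proof is correct. The monotonicity steps are fine: you pad with $c_{p-r}=0$ for $B_r$, and you discard $z_{p+r}$ together with the $n=r$ equation for $Z_r$. The key step also checks out:
\begin{itemize}
\item the weight-$(p-m)$ component of $dc$ is $\sum_{j=m}^{l-1}d_{j-m}c_{p-j}$, which vanishes by the $B_l$-relations;
\item its weight-$p$ component is $\alpha$;
\item sorting $d(dc)=0$ by weight gives $d_0\alpha=0$ and all the $Z_j$-equations at once, with the explicit witnesses $z_{p+s}=-\sum_{j=0}^{l-1}d_{s+j}c_{p-j}$.
\end{itemize}

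The paper does not prove this lemma; it imports it from the cited reference, so there is no in-paper argument to compare against. Your route is the standard one for multicomplexes: an element of $B_l$ is the leading component of the $d$-closed form $dc$, hence a permanent cycle. It uses only the relations $\sum_{i+j=n}d_id_j=0$ and the directness of the weight decomposition. It needs neither the scalar product nor $d_0^{-1}$, nor the Rumin characterisation in Proposition~\ref{prop: Z and B in terms of d_c}.

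One further advantage is worth recording. Your argument transfers verbatim to the compactly supported subcomplex under the convention of Remark~\ref{rmk: compact support convention for Z and B}. If the $c_{p-j}$ are compactly supported, so are the witnesses $z_{p+s}$, since they are obtained by differential operators. Padding and discarding witnesses also preserve compact support. This gives $B_l^{p,q}\cap\Omega_c^{p+q}(G)\subseteq Z_j^{p,q}\cap\Omega_c^{p+q}(G)$, which is exactly the inclusion the paper tacitly needs for the test spaces $\mathscr D_{r,l}^{p,k-p}(G)$ in \eqref{eq: compactly supported spectral forms} to be well defined.
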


The following result is the crucial observation underlying the construction of the spectral complexes.

\begin{proposition}[Proposition 4.2 in
\cite{tripaldi2026spectralcomplexestruncatedmulticomplexes}]
\label{prop: mixed page differentials}
Let \(j\in I_{(p,k)}\). For every pair of positive integers \(m_1,m_2\), the spectral-sequence differential described in Proposition
\ref{prop: differentials Delta_r} induces a well-defined map
\begin{align}\label{eq: mixed page differential}
\Delta_j\colon
\frac{Z_j^{p,k-p}}{B_{m_1}^{p,k-p}}
\longrightarrow
\frac{Z_{m_2}^{p+j,k+1-p-j}}
{B_j^{p+j,k+1-p-j}}\,.
\end{align}
Moreover, if \(l\in I_{(p+j,k+1)}\), then
\begin{align*}
\frac{Z_j^{p,k-p}}{B_{m_1}^{p,k-p}}
\xrightarrow{\Delta_j}
\frac{Z_l^{p+j,k+1-p-j}}
{B_j^{p+j,k+1-p-j}}
\xrightarrow{\Delta_l}
\frac{Z_{m_2}^{p+j+l,k+2-p-j-l}}
{B_l^{p+j+l,k+2-p-j-l}}
\end{align*}
satisfies $\Delta_l\circ\Delta_j=0$.

\end{proposition}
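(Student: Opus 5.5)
The plan is to work directly with the defining equations of Definition \ref{def: Z and B defined} and the explicit formula for $\Delta_r$ from Proposition \ref{prop: differentials Delta_r}, rather than with the Rumin characterisation. The proof splits into three steps: the map is well defined on representatives, it lands in the stated target, and the composite vanishes.

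\emph{Independence of the auxiliary family.} Take $\alpha\in Z_j^{p,k-p}$ with auxiliary forms $\{z_{p+i}\}_{1\le i\le j-1}$ and set $\Delta_j\alpha:=d_j\alpha-\sum_{i=1}^{j-1}d_iz_{p+j-i}$. If $\{z'_{p+i}\}$ is a second admissible family, then the differences $w_{p+i}=z_{p+i}-z'_{p+i}$ satisfy the homogeneous system $\sum_{i=0}^{n-1}d_iw_{p+n-i}=0$ for $1\le n\le j-1$. The change in $\Delta_j\alpha$ is $\sum_{i=1}^{j-1}d_iw_{p+j-i}$. I will rewrite it as $\sum_{m=0}^{j-1}d_m c_{p+j-m}$, with $c_{p+j-m}:=-w_{p+j-m}$ for $m\ge1$ and $c_{p+j}:=0$. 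The homogeneous system then gives exactly the compatibility equations required in the definition of $B_j^{p+j,\cdot}$, so the difference lies in $B_j^{p+j,k+1-p-j}$.

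\emph{Independence of the representative.} Changing $\alpha$ by an element $\beta=\sum_{m=0}^{m_1-1}d_mc_{p-m}$ of $B_{m_1}^{p,k-p}$ requires building an auxiliary family for $\beta$ out of the $c$'s. The natural choice is $z_{p+n}:=\sum_{m\ge n}d_{m-n}\ldots$, that is, shifted partial sums of the $c_{p-m}$ pushed through $d_0,\dots,d_{j-1}$, and then checking that $\Delta_j\beta$ is again a $B_j$-boundary. This uses the relations \eqref{eq: structure maps multicomplex}. I expect this to be the delicate index bookkeeping; since $m_1$ may exceed $j$, one must verify that the tail terms assemble into a $B_j$-element at bidegree $p+j$.

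\emph{Target and composite.} To show $\Delta_j\alpha\in Z_{m_2}^{p+j,\cdot}$ for every $m_2$, I will exhibit the auxiliary forms $z'_{p+j+n}$ via the telescoping identity obtained by applying $\sum_{a+b=n}d_ad_b=0$ to the extended family $(\alpha,z_{p+1},\dots,z_{p+j-1})$. This is the Livernet--Whitehouse argument showing that $\Delta_j$ lands in $Z_\infty$ modulo $B_j$. For $j\in I_{(p,k)}$ on the Engel group I can also simply check it using the finitely many explicit bidegrees together with Proposition \ref{prop: Z and B in terms of d_c}. For $\Delta_l\circ\Delta_j=0$, I will take the same extended family to produce an auxiliary family for $\Delta_j\alpha$ of length $l-1$. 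Computing $\Delta_l(\Delta_j\alpha)$ with this family then gives a sum of the form $\sum d_ad_b$ applied to earlier terms, which, after using the multicomplex relations, is an explicit element of $B_l^{p+j+l,\cdot}$ whose primitive family is built from $\alpha$ and the $z$'s.

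The main obstacle is the representative-independence step when $m_1\neq j$. As a fallback, I will pass through the Rumin description of Proposition \ref{prop: Z and B in terms of d_c} and Proposition \ref{prop: differentials Delta_r}, where modulo $\operatorname{Im}d_0$ everything reduces to identities among the $d_c^r$ that follow from $d_c^2=0$ split by weight.
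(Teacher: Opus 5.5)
The paper does not prove this proposition; it imports it from \cite{tripaldi2026spectralcomplexestruncatedmulticomplexes}, so there is no in-paper argument to compare against. Your direct route through Definition~\ref{def: Z and B defined} is the right one. Your first step is correct up to a harmless sign: the difference of two auxiliary families yields an element of $B_j^{p+j,k+1-p-j}$ with $c_{p+j}=0$.

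The step you single out as the main obstacle is in fact the easiest. As written, your hint has the indices reversed: the auxiliary forms for $\beta$ come from pushing the $c$'s through $d_n,d_{n+1},\dots$, not through $d_0,\dots,d_{j-1}$. Write $(\cdot)_s$ for the weight-$s$ component. Let $\beta=\sum_{m=0}^{m_1-1}d_mc_{p-m}\in B_{m_1}^{p,k-p}$ and $c:=\sum_m c_{p-m}$. The defining conditions say exactly that $dc$ has no component of weight $<p$ and that $(dc)_p=\beta$. Put
\[
z_{p+n}:=-(dc)_{p+n}=-\sum_{m=0}^{m_1-1}d_{n+m}c_{p-m}\qquad(n\ge 1)\,.
\]
Splitting $d(dc)=0$ by weight gives $d_0\beta=0$ and $d_n\beta=\sum_{i=0}^{n-1}d_iz_{p+n-i}$ for every $n\ge 1$. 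The weight-$(p+j)$ component reads
\[
\Delta_j\beta=d_j\beta-\sum_{i=1}^{j-1}d_iz_{p+j-i}=d_0z_{p+j}\in B_1^{p+j,k+1-p-j}\subseteq B_j^{p+j,k+1-p-j}\,.
\]
Combine this with your first step, applied to $\alpha+\beta$ with the summed family. This gives independence of the representative for every $m_1$, and nothing has to be reassembled when $m_1>j$.

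The same computation also settles your third step. Set $\tilde\alpha:=\alpha-\sum_{i=1}^{j-1}z_{p+i}$. Then $d\tilde\alpha$ has no component of weight $<p+j$, and $(d\tilde\alpha)_{p+j}=\Delta_j\alpha$ exactly. Hence $\Delta_j\alpha\in B_{j+1}^{p+j,k+1-p-j}$ with primitive $\tilde\alpha$.

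Now apply the previous computation at weight $p+j$, with $c=\tilde\alpha$, $\beta=\Delta_j\alpha$ and $l$ in place of $j$. This shows two things:
\begin{enumerate}
\item $\Delta_j\alpha$ satisfies the equations defining $Z_{m_2}^{p+j,k+1-p-j}$ for every $m_2$.
\item With the family $-(d\tilde\alpha)_{p+j+n}$, one gets $\Delta_l\Delta_j\alpha=-d_0\bigl((d\tilde\alpha)_{p+j+l}\bigr)\in B_l^{p+j+l,k+2-p-j-l}$.
\end{enumerate}

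Consequently the Rumin fallback is unnecessary, and the hypotheses $j\in I_{(p,k)}$ and $l\in I_{(p+j,k+1)}$ are never used. Moreover, every auxiliary form produced is a weight component of $dc$ or $d\tilde\alpha$. So the argument runs verbatim in the compactly supported complex, which is the version the paper needs later for the spaces $\mathscr D_{r,l}^{p,k-p}(G)$.
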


\begin{definition}[Spectral complexes associated with
\((\Omega^\bullet(G),d)\)]
\label{def: spectral complexes}
For positive integers \(j,l\), define the bigraded quotient
\begin{align}\label{notation spaces}
E_{j,l}^{p,k-p}
:=
\frac{Z_j^{p,k-p}}{B_l^{p,k-p}}\,.
\end{align}
This quotient is well defined by Lemma
\ref{lem: inclusions B and Z}.

Let \(l,j,i\) be successive admissible orders, in the sense that $l\in I_{(p-l,k-1)}$, $j\in I_{(p,k)}$, and $i\in I_{(p+j,k+1)}$.
Proposition \ref{prop: mixed page differentials} then gives
\begin{align*}
E_{l,m_1}^{p-l,k-1-p+l}
\xrightarrow{\Delta_l}
E_{j,l}^{p,k-p}
\xrightarrow{\Delta_j}
E_{i,j}^{p+j,k+1-p-j}
\xrightarrow{\Delta_i}
E_{m_2,i}^{p+j+i,k+2-p-j-i},
\end{align*}
with
\begin{align*}
\Delta_j\circ\Delta_l=0\ \text{ and }
\
\Delta_i\circ\Delta_j=0,
\end{align*}
for all positive integers \(m_1,m_2\) for which the adjacent terms occur.

 Since this construction applies in every degree $k$, it yields a family of quotients $E_{j,l}^{\bullet,\bullet}$ connected by the operators $\Delta_j$ of bidegree $(j,1-j)$, hence forming a collection of complexes.
    We  will refer to the collection of such quotients as the \textit{spectral complexes} associated with the $s$-multicomplex $(\Omega^\bullet(G),d)$ and denote this collection schematically by
    \begin{align*}
        \bigg\lbrace \left( E_{j,l}^{\bullet,\bullet},\Delta_j\right)\bigg\rbrace_{j\in I_{(\bullet,\bullet)}}\,.
    \end{align*}
    \end{definition}

\begin{remark}[Quotients and orthogonal representatives]
\label{rmk: quotient versus representatives}
Unless $j=l=1$, the quotient \(E_{j,l}^{p,k-p}=Z_j^{p,k-p}/B_l^{p,k-p}\) should not be regarded directly as a subspace of \(E_1^{p,k-p}\).

If a Hilbert-space realisation is chosen in which all the relevant
subspaces $B_r^{p,q}$ are closed, then the quotient admits the
orthogonal realisation
\begin{align*}
\frac{Z_j^{p,k-p}}{B_l^{p,k-p}}
\cong
Z_j^{p,k-p}\cap
\left(B_l^{p,k-p}\right)^\perp
\subseteq
Z_1^{p,k-p}\cap
\left(B_1^{p,k-p}\right)^\perp
\cong
E_0^k\cap\Omega^{p,k-p}(G)\,.
\end{align*}
Under this identification, the differential on orthogonal representatives is obtained by applying the operator \(\Delta_j\) and then projecting onto the orthogonal complement of
\(B_j^{p+j,k+1-p-j}\).

If $Z_j^{p,k-p}$ is closed but $B_l^{p,k-p}$ is not, then
$Z_j^{p,k-p}\cap\left(B_l^{p,k-p}\right)^\perp$
realises the reduced quotient
\[
Z_j^{p,k-p}/\overline{B_l^{p,k-p}}^{\,Z_j},
\]
rather than the algebraic quotient
$Z_j^{p,k-p}/B_l^{p,k-p}$.
\end{remark}

\subsubsection{Constructing and defining spectral complexes in the Engel group}\label{subsection: spectral complexes in Engel}

As presented in Definition \ref{def: spectral complexes}, in order to construct the spectral complexes associated with the de Rham complex on the Engel group, we need to identify for which choices of weight-degree pair $(p,k)$ the quotients $E_1^{p,k-p}$ are non-trivial, and then study the relative collection of indices $I_{(p,k)}$. 

We recall the isomorphism $E_1^{p,k-p}\cong E_0^k\cap\Omega^{p,k-p}(G)$, so that we can equivalently study which spaces of Rumin forms are non-trivial in degree $k$ and weight $p$ to retrieve the same information (one can also refer to the previous diagram for a more direct way of doing this).
\begin{itemize}
    \item degree $k=0$: the only non-trivial weight-degree pair is $(0,0)$;
    \item degree $k=1$: the only non-trivial weight-degree pair is $(1,1)$;
    \item degree $k=2$: there are two non-trivial choices of weight-degree pair, namely $(3,2)$ and $(4,2)$;
    \item degree $k=3$: the only non-trivial weight-degree pair is $(6,3)$;
    \item degree $k=4$: the only non-trivial weight-degree pair is $(7,4)$.
\end{itemize}
As a direct consequence of the explicit computations carried out in Subsection \ref{subsection: Rumin complex in Engel}, we get the following collection of indices $I_{(p,k)}$:
\begin{itemize}
    \item  at $(0,0)$ we have that $d_c=d_c^1$, and so $I_{(0,0)}=\{1\}$;
    \item at $(1,1)$ we have that $d_c=d_c^2+d_c^3$, and so $I_{(1,1)}=\{2,3\}$;
    \item at $(3,2)$ we have that $d_c=d_c^3$, and so $I_{(3,2)}=\{3\}$;
    \item at $(4,2)$ we have that $d_c=d_c^2$, and so $I_{(4,2)}=\{2\}$;
    \item at $(6,3)$ we have that $d_c=d_c^1$, and so $I_{(6,3)}=\{1\}$;
    \item at $(7,4)$ we have that $d_c\equiv 0$, and so $I_{(7,4)}=\emptyset$.
\end{itemize}

Consequently, the family of spectral complexes associated with the Engel group consists of two complexes. More precisely, applying Definition~\ref{def: spectral complexes}, we obtain the following two spectral complexes:
\[\begin{tikzcd}
	{Z_{1}^{0,0}} & {Z_2^{1,0}/B_1^{1,0}} & {Z_3^{3,-1}/B_2^{3,-1}} & {Z_1^{6,-3}/B_3^{6,-3}} & {Z_1^{7,-3}/B_1^{7,-3}} \\
	{Z_1^{0,0}} & {Z_3^{1,0}/B_1^{1,0}} & {Z_2^{4,-2}/B_3^{4,-2}} & {Z_1^{6,-3}/B_2^{6,-3}} & {Z_1^{7,-3}/B_1^{7,-3}}
	\arrow["{\Delta_1}", from=1-1, to=1-2]
	\arrow["{\Delta_2}", from=1-2, to=1-3]
	\arrow["{\Delta_3}", from=1-3, to=1-4]
	\arrow["{\Delta_1}", from=1-4, to=1-5]
	\arrow["{\Delta_1}", from=2-1, to=2-2]
	\arrow["{\Delta_3}", from=2-2, to=2-3]
	\arrow["{\Delta_2}", from=2-3, to=2-4]
	\arrow["{\Delta_1}", from=2-4, to=2-5]
\end{tikzcd}\]

For the purposes of this paper, we are interested in an explicit characterisation of all the graded submodules involved in the construction of these two spectral complexes. In particular, we would to find an explicit description of each $Z_r^{p,k-p}$ and $B_r^{p,k-p}$ as subspaces of the space of smooth forms $\Omega^\bullet(G)$, but also as subspaces of the space of compactly supported smooth forms $\Omega^\bullet_c(G)$. As we will see, the distinction between the two will be significant (see for example Lemmata \ref{lemma: B esplicito nel caso liscio in grado 3} and \ref{lemma: Z3 a supporto compatto in deg 1}).
\begin{remark}[Notation for compactly supported forms]
\label{rmk: compact support convention for Z and B}
Throughout the remainder of the paper, we adopt the convention that
\[
Z_r^{p,q}\cap\Omega_c^{p+q}(G)
\ \text{and}\
B_r^{p,q}\cap\Omega_c^{p+q}(G)
\]
denote the subspaces obtained by applying Definition
\ref{def: Z and B defined} to the compactly supported subcomplex
\((\Omega_c^\bullet(G),d)\). In particular, not only the resulting
form, but also all the auxiliary witnesses appearing in the
definitions of \(Z_r^{p,q}\) and \(B_r^{p,q}\), are required to be
compactly supported.

Thus, the symbol \(\cap\Omega_c^{p+q}(G)\) is part of our notational
convention and should not be interpreted merely as the set-theoretic
intersection of \(\Omega_c^{p+q}(G)\) with the corresponding subspace
constructed inside \(\Omega^\bullet(G)\). 
\end{remark}

In order to carry out the computations, one can equivalently use Definition \ref{def: Z and B defined} or the equivalent expression in terms of Rumin forms and the Rumin differential presented in Proposition \ref{prop: Z and B in terms of d_c}. Let us first focus on arbitrary smooth forms $\Omega^\bullet(G)$.
\begin{itemize}
    \item $Z_1^{0,0} = \mathcal{C}^\infty(G)$;
    \item $Z_2^{1,0}=\ker d_c^1\cap\Omega^{1,0}(G)=\Omega^{1,0}(G) = \operatorname{span}_{\mathcal{C}^\infty(G)} \{\theta_1, \theta_2\}$ and $B_1^{1,0}$ = \{0\};
    \item $Z_3^{1,0}=\ker d_c^2\cap\Omega^{1,0}(G) = \{f \theta_1 + g \theta_2 \mid (X_2X_1-X_3)g = X_2^2f \}$ and $B_1^{1,0} =\{0\}$;
    \item $Z_3^{3,-1}=(\ker d_c^1\cap\Omega^{3,-1}(G))\cap\ker( d_c^2\vert_{\Omega^{3,-1}(G)}+d_c^1\vert_{\Omega^{4,-2}(G)})\cap\Omega^{3,-1}(G)+B_1^{3,-1}=\Omega^{3,-1}(G)$ and $B_2^{3,-1}=\operatorname{Im}d_0\cap\Omega^{3,-1}(G)=B_1^{3,-1}=\operatorname{span}_{\mathcal{C}^\infty(G)}\{\theta_1\wedge\theta_3\}$;
    \item $Z_2^{4,-2}=\ker d_c^1\cap\Omega^{4,-2}(G)=\operatorname{span}_{\mathcal{C}^\infty(G)}\{\theta_1\wedge\theta_4\}$ and $B_3^{4,-2}=\{0\}$;
    \item $Z_1^{6,-3} =\ker d_0\cap\Omega^{6,-3}(G)= \operatorname{span}_{\mathcal{C}^\infty(G)}\{\theta_1 \wedge \theta_3 \wedge \theta_4, \theta_2 \wedge \theta_3 \wedge \theta_4\}$;
    \item $B_3^{6,-3} = \operatorname{Im}(d_c^2\vert_{\Omega^{4,-2}(G)})\cap\Omega^{6,-3}(G)=\{(X_3f + X_1X_2f) \theta_1 \wedge \theta_3 \wedge \theta_4 + X_2^2f \theta_2 \wedge \theta_3 \wedge \theta_4 \mid f \in \mathcal{C}^\infty(G)\}$ and $B_2^{6,-3}=\{0\}$;
    \item $Z_1^{7,-3}=\Omega^{7,-3}(G)=\operatorname{span}_{\mathcal{C}^\infty(G)}\{\theta_1\wedge\theta_2\wedge\theta_3\wedge\theta_4\}$ and $B_1^{7,-3}=\{0\}$.
\end{itemize}
Under the canonical identifications of the relevant quotients with
their Rumin representatives, the two complexes take the following form:
\[\begin{tikzcd}
	\mathcal{C}^\infty(G) & {E_0^1} & {E_0^2\cap\Omega^{3,-1}(G)} & {E_0^3/B_3^{6,-3}} & {\Omega^{7,-3}(G)} \\
{\mathcal{C}^\infty(G)} & {Z_3^{1,0}} & {E_0^2\cap\Omega^{4,-2}(G)} & {E_0^3} & {\Omega^{7,-3}(G)}
	\arrow["{d_c^1}", from=1-1, to=1-2]
	\arrow["{d_c^2}", from=1-2, to=1-3]
	\arrow["{\Delta_3}", from=1-3, to=1-4]
	\arrow["{\Delta_1}", from=1-4, to=1-5]
	\arrow["{d_c^1}", from=2-1, to=2-2]
	\arrow["{d_c^3}", from=2-2, to=2-3]
	\arrow["{d_c^2}", from=2-3, to=2-4]
	\arrow["{d_c^1}", from=2-4, to=2-5]
\end{tikzcd}\]
In other words, after decomposing the Rumin complex according to weight, only
two terms involve an additional restriction or quotient, namely
\begin{align*}
    E_0^2\cap\Omega^{3,-1}(G)\xrightarrow[]{\Delta_3}E_0^3/B_3^{6,-3}\ \text{ and }\ Z_3^{1,0}\xrightarrow[]{d_c^3} E_0^2\cap\Omega^{4,-2}(G)\,.
\end{align*}

\begin{lemma}\label{lemma: B esplicito nel caso liscio in grado 3}
    We have the following equality
    \begin{equation*}
        \{ (-X_3f - X_1X_2f, -X_2^2 f) 
        \mid f \in \C^\infty(G)\} = \{ (g,h) \in \C^\infty(G) \times \C^\infty(G) \mid X_2 g = X_1 h\}
    \end{equation*}
\end{lemma}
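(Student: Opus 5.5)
The plan is to prove the two inclusions separately. The inclusion $\subseteq$ is a commutator computation. The inclusion $\supseteq$ comes from solving a PDE system explicitly in coordinates in which $X_2$ and $X_3$ are coordinate vector fields. Throughout I use only $[X_1,X_2]=X_3$, $[X_1,X_3]=X_4$ and $[X_2,X_3]=[X_2,X_4]=[X_3,X_4]=0$.

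\emph{Inclusion $\subseteq$.} Let $g=-X_3f-X_1X_2f$ and $h=-X_2^2f$. Since $X_1X_2=X_2X_1+X_3$, we get
\[X_1X_2^2f=X_2X_1X_2f+X_3X_2f=X_2X_1X_2f+X_2X_3f.\]
The second equality uses that $X_2$ and $X_3$ commute. This gives $X_1h=-X_2X_3f-X_2X_1X_2f=X_2g$, as required.

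\emph{Inclusion $\supseteq$.} Fix $(g,h)$ with $X_2g=X_1h$. We must find $f\in\C^\infty(G)$ with
\[X_2^2f=-h \quad\text{and}\quad (X_3+X_1X_2)f=-g.\]
First, I introduce global coordinates of the second kind
\[(x_1,x_4,x_3,x_2)\longmapsto \exp(x_1X_1)\exp(x_4X_4)\exp(x_3X_3)\exp(x_2X_2),\]
which give a diffeomorphism $\mathbb R^4\to G$. This holds because $\operatorname{span}\{X_2,X_3,X_4\}$ is an abelian ideal with one-dimensional complement $\operatorname{span}\{X_1\}$. Left-invariant vector fields generate right translations, and $\exp(X_2)$, $\exp(X_3)$ commute. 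Hence in these coordinates $X_2=\partial_{x_2}$ and $X_3=\partial_{x_3}$.

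Next, I solve the first equation by integrating twice in $x_2$:
\[f_0(x):=-\int_0^{x_2}\!\!\int_0^{s}h(x_1,x_4,x_3,\sigma)\,d\sigma\,ds.\]
This is smooth and satisfies $X_2^2f_0=-h$. I then set $R:=g+X_3f_0+X_1X_2f_0$. The key computation is that $X_2R=0$:
\[X_2R=X_1h+X_3X_2f_0+X_2X_1X_2f_0 = X_1h+X_3X_2f_0+\bigl(X_1X_2^2f_0-X_3X_2f_0\bigr)=X_1h-X_1h=0.\]
Here I used $X_2X_3=X_3X_2$ and $X_2X_1=X_1X_2-X_3$. So $R$ does not depend on $x_2$.

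Finally, I correct $f_0$ by some $b$ with $X_2b=0$. Such a $b$ leaves the first equation unchanged, and it satisfies $(X_3+X_1X_2)b=X_3b=\partial_{x_3}b$. I take
\[b(x_1,x_4,x_3):=-\int_0^{x_3}R(x_1,x_4,\tau)\,d\tau,\]
which is smooth and independent of $x_2$. Then $f:=f_0+b$ solves both equations. Note that no compact support is required here, since the statement concerns $\C^\infty(G)$.

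The main obstacle is the compatibility step $X_2R=0$ together with the choice of coordinates that straighten $X_2$ and $X_3$ simultaneously. Everything else is integration along flows. I would verify the coordinate claim directly from $p\exp(tX_3)=\exp(x_1X_1)\exp(x_4X_4)\exp((x_3+t)X_3)\exp(x_2X_2)$, which uses $[X_2,X_3]=0$.
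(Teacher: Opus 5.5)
Your proof is correct and follows essentially the same route as the paper: a commutator computation for $\subseteq$, then coordinates straightening $X_2$ and $X_3$, a double integration along $X_2$, the check that the defect $R$ is $X_2$-invariant, and a correction obtained by integrating $R$ along $X_3$. Your second-kind coordinates are in fact the paper's coordinates $(a,s,t,r)$, via $a=x_1$, $s=x_2$, $t=x_3$, $r=x_4$, in which $X_1=\partial_{x_1}-x_2\partial_{x_3}-x_3\partial_{x_4}$; the paper obtains them by an explicit polynomial change of variables rather than from the abelian ideal $\operatorname{span}\{X_2,X_3,X_4\}$.
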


\begin{proof}
We first prove the inclusion from left to right. Let
\[
g=-X_3f-X_1X_2f\ ,
\
h=-X_2^2f\,.
\]
Since \([X_2,X_3]=0\) and
\(
X_2X_1=X_1X_2-X_3
\),
we have
\begin{align*}
X_2g
&=-X_2X_3f-X_2X_1X_2f=-X_3X_2f-(X_1X_2-X_3)X_2f\\
&=-X_3X_2f-X_1X_2^2f+X_3X_2f
=-X_1X_2^2f
=X_1h.
\end{align*}

For the converse inclusion, we use global coordinates adapted to \(X_2\) and
\(X_3\). In the standard global coordinates \((x,y,z,w)\in\mathbb{R}^4\) (exponential coordinates of the second type, see for example \cite{LeDonneTripaldi2021})
, the left-invariant vector fields can be
written as
\[
X_1=\partial_x\ ,
\ 
X_2=\partial_y+x\partial_z+\frac{x^2}{2}\partial_w\ ,
\
X_3=\partial_z+x\partial_w\ ,
\
X_4=\partial_w\,.
\]
Introduce new global coordinates by setting $a=x$, $s=y$, $t=z-xy$, and $r=w-xz+x^2y/2$. The inverse coordinate change is
$x=a$, $y=s$, $
z=t+as$ and $
w=r+at+\frac{a^2}{2}s$,
and hence this is a global polynomial change of coordinates.
In these new coordinates, we have the following expression for our left-invariant vector fields
\[
X_1=\partial_a-s\partial_t-t\partial_r\ ,\ 
X_2=\partial_s\ ,\
X_3=\partial_t\ ,\ 
X_4=\partial_r\,.
\]
Now let \(g,h\in C^\infty(G)\) satisfy
\[
X_2g=X_1h.
\]
Since $X_2=\partial_s$, we are able to construct a smooth function \(f_0\) satisfying
\(
-X_2^2f_0=h\), by imposing
\[
f_0(a,s,t,r)
=
-\int_0^s (s-\sigma)
h(a,\sigma,t,r)\,d\sigma\,.
\]
Indeed, differentiating twice with respect to \(s\) gives
\(
X_2^2f_0=\partial_s^2f_0=-h\),
and hence
$-X_2^2f_0=h$.

Set
\[
g_0=-X_3f_0-X_1X_2f_0\,.
\]
Following similar simplifications as in the first part of the proof, we have $X_2g_0=X_1(-X_2^2f_0)=X_1h$, and so
\[
X_2(g-g_0)=X_2g-X_2g_0=X_1h-X_1h=0\,.
\]
Since \(X_2=\partial_s\), we know that the function \(g-g_0\) is independent of \(s\), and thus there is a smooth function \(\rho=\rho(a,t,r)\) such that $(g-g_0)(a,s,t,r)=\rho(a,t,r)$.

Consequently, the function $k$ defined as
\[
k(a,s,t,r)
=
-\int_0^t \rho(a,\tau,r)\,d\tau
\] is also independent of \(s\), and therefore $X_2k=0$, and hence also $-X_2^2k=0$.
Moreover, since \(X_3=\partial_t\),
we have that $X_3k=-\rho$, 
so that
\[
-X_3k-X_1X_2k
=
-X_3k
=
\rho
=
g-g_0.
\]

Finally, set
\[
f=f_0+k.
\]
Then $
-X_2^2f
=
-X_2^2f_0-X_2^2k
=
h$,
and
\begin{align*}
-X_3f-X_1X_2f
&=
\bigl(-X_3f_0-X_1X_2f_0\bigr)
+
\bigl(-X_3k-X_1X_2k\bigr)=g_0+g-g_0=g\,.
\end{align*}
Therefore
\[
(g,h)
=
\bigl(-X_3f-X_1X_2f,-X_2^2f\bigr)\,,
\]
which proves the second inclusion.
\end{proof}

\begin{remark}
The preceding equality is specific to unrestricted smooth coefficients
on the whole group \(G\). If we instead consider the corresponding spaces obtained by requiring all coefficients
to be compactly supported, then
\[
B_{3}^{6,-3}\cap\Omega_c^{6,-3}(G)\subsetneq Z_{2}^{6,-3}\cap\Omega_c^{6,-3}(G).
\]
Indeed, in the global coordinates \((a,s,t,r)\) introduced in the proof,
we have \(X_2=\partial_s\). Thus, if
\[
(g,h)
=
\bigl(-X_3f-X_1X_2f,-X_2^2f\bigr)
\]
for some \(f\in C_c^\infty(G)\), then $h=-\partial_s^2f$.
Consequently, for every fixed \((a,t,r)\),
\[
\int_{\mathbb R}h(a,s,t,r)\,ds=0
\]
and, in fact,
\[
\int_{\mathbb R}s\,h(a,s,t,r)\,ds=0.
\]
These integral conditions along the $X_2$-lines are not consequences of the
compatibility equation \(X_2g=X_1h\).

For unrestricted smooth coefficients, under the identifications
\[
B_3^{6,-3}
=
\left\{
\bigl(-X_3f-X_1X_2f,-X_2^2f\bigr)
:
f\in C^\infty(G)
\right\}
\]
and
\[
Z_2^{6,-3}
=
\left\{
(g,h)\in C^\infty(G)^2
:
X_2g=X_1h
\right\},
\]
the proposition gives
\[
B_3^{6,-3}=Z_2^{6,-3}.
\]
Consider now the relevant part of the spectral complex,
\[
\frac{Z_3^{3,-1}}{B_2^{3,-1}}
\xrightarrow{\Delta_3}
\frac{Z_1^{6,-3}}{B_3^{6,-3}}
\xrightarrow{\Delta_1}
\frac{Z_1^{7,-3}}{B_1^{7,-3}}.
\]
Since consecutive differentials in the spectral complex compose to
zero, we have $\operatorname{Im}\Delta_3
\subseteq
\ker\Delta_1$.
Moreover,
\[
\ker\left(
\Delta_1:
\frac{Z_1^{6,-3}}{B_3^{6,-3}}
\longrightarrow
\frac{Z_1^{7,-3}}{B_1^{7,-3}}
\right)
=
\frac{Z_2^{6,-3}}{B_3^{6,-3}}.
\]
The equality \(B_3^{6,-3}=Z_2^{6,-3}\) therefore gives
\[
\ker\Delta_1
=
\frac{Z_2^{6,-3}}{B_3^{6,-3}}
=
0,
\]
and hence
\[
\Delta_3=0.
\]

Thus, in the global smooth setting, this portion of the spectral
complex takes the form
\[
\frac{Z_3^{3,-1}}{B_2^{3,-1}}
\xrightarrow{\,0\,}
\frac{Z_1^{6,-3}}{Z_2^{6,-3}}
\xrightarrow{\Delta_1}
\frac{Z_1^{7,-3}}{B_1^{7,-3}},
\]
where the second map is injective. Notice that this does not mean that
\(Z_1^{6,-3}/B_3^{6,-3}\) is the zero space. Rather, it means that its
only class annihilated by the following \(\Delta_1\) is the zero class.

Equivalently, although a differential expression representing
\(\Delta_3[\alpha]\) may be nonzero as a smooth differential form, it
belongs to \(B_3^{6,-3}\) and therefore represents the zero class in
\(Z_1^{6,-3}/B_3^{6,-3}\).
\end{remark}

\begin{lemma}\label{lemma: Z3 a supporto compatto in deg 1}
The space $Z_3^{1,0}$ when restricted to compactly supported forms $\Omega_c^1(G)$ simplifies as follows
    \begin{equation*}
        Z_{3}^{1,0} \cap \Omega_c^1(G) = \left\{ X_1 f \theta_1 + X_2 f\theta_2 \,\, \big | \,\, f \in \mathcal{C}_c^\infty(G)\right\} = d_1 \left( \mathcal{C}_c^\infty(G) \right)
    \end{equation*}
\end{lemma}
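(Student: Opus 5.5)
The plan is to prove the two inclusions separately, working throughout in the global coordinates $(a,s,t,r)$ from the proof of Lemma \ref{lemma: B esplicito nel caso liscio in grado 3}. In these coordinates $X_1=\partial_a-s\partial_t-t\partial_r$, $X_2=\partial_s$, $X_3=\partial_t$ and $X_4=\partial_r$. Following the convention of Remark \ref{rmk: compact support convention for Z and B}, the auxiliary witnesses must also be compactly supported.

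\emph{Inclusion $\supseteq$.} Let $f\in\C_c^\infty(G)$ and $\alpha=d_1f=X_1f\,\theta_1+X_2f\,\theta_2$. I take the witnesses
\[
z_2=-X_3f\,\theta_3,\qquad z_3=-X_4f\,\theta_4,
\]
which are compactly supported, so that $\alpha+z_2+z_3=df-d_0f$ with $d_0f=0$. Then $d(\alpha+z_2+z_3)=d^2f=0$. Splitting this identity by weight via Lemma \ref{lem: red lemma} gives
\[
d_0\alpha=0,\qquad d_1\alpha=-d_0z_2,\qquad d_2\alpha=-d_1z_2-d_0z_3.
\]
These are exactly the equations of Definition \ref{def: Z and B defined} for $r=3$, up to replacing the witnesses by their negatives.

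\emph{Inclusion $\subseteq$, reduction to a PDE.} Let $\alpha=f\theta_1+g\theta_2\in Z_3^{1,0}\cap\Omega_c^1(G)$. Then $f,g\in\C_c^\infty(G)$, because the witnesses play no role in the support of $\alpha$ itself. From the smooth description $Z_3^{1,0}=\ker d_c^2\cap\Omega^{1,0}(G)$ we get $(X_2X_1-X_3)g=X_2^2f$. Setting $u:=X_1g-X_2f$, this reads $X_2u=X_3g$, and $u$ is compactly supported.

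\emph{Construction of the potential.} I define
\[
F(a,s,t,r):=\int_{-\infty}^s g(a,\sigma,t,r)\,d\sigma.
\]
Then $F$ is smooth, $X_2F=g$, and $F$ vanishes for $s$ very negative as well as outside a compact set in the variables $(a,t,r)$.

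Using $X_2X_1=X_1X_2-X_3$, I compute
\[
X_2(X_1F-f)=X_1g-X_3F-X_2f=u-X_3F,
\qquad
X_2(u-X_3F)=X_3g-X_3X_2F=0,
\]
where the second identity uses $[X_2,X_3]=0$. So $u-X_3F$ is independent of $s$. It vanishes for $s\ll0$, hence identically, and therefore $u=X_3F$. It follows that $X_2(X_1F-f)=0$; since $X_1F-f$ also vanishes for $s\ll0$, we get $X_1F=f$. Hence $\alpha=d_1F$.

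\emph{Compact support of $F$.} This is the main obstacle: I must show that $F$ also vanishes for $s\gg0$. For large $s$ the function $F$ equals $\varphi(a,t,r):=\int_{\mathbb R}g\,ds$. Since $u=X_3F$ and $u$ vanishes for large $s$, we get $\partial_t\varphi=0$. As $\varphi$ is compactly supported in $t$, this forces $\varphi\equiv0$. Therefore $F\in\C_c^\infty(G)$, which completes the inclusion $\subseteq$.
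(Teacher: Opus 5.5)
Your proof is correct and follows essentially the paper's route: the same coordinates $(a,s,t,r)$, the same potential $\int_{-\infty}^s g\,d\sigma$, and the same $X_2$-independence arguments. The only differences are that you postpone the vanishing of $\int_{\mathbb R}g\,ds$ to the end, and that you verify $\supseteq$ with explicit compactly supported witnesses; this respects the compact-support convention more carefully than the paper's appeal to $d_c^2\circ d_c^1=0$.

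There is a harmless sign slip in that step. With your $z_2=-X_3f\,\theta_3$ and $z_3=-X_4f\,\theta_4$, the correct identity is $\alpha-z_2-z_3=df$, not $\alpha+z_2+z_3=df-d_0f$. These $z_j$ already satisfy the equations of Definition \ref{def: Z and B defined} as they stand, so no further change of sign is needed; following your remark literally would give the wrong witnesses.
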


\begin{proof}

The first inclusion
\[
d_1\bigl(\C_c^\infty(G)\bigr)
\subseteq Z_3^{1,0}\cap\Omega_c^1(G)
\]
follows directly from the fact that $Z_3^{1,0}=\ker d_c^2\cap\Omega^{1,0}(G)$, $d_c^2\circ d_c^1=0$ when applied to any smooth function, and that $d_c^1=d_1$ as operators on smooth functions. Since the statement holds for any smooth function, it also holds in the case of compactly supported smooth functions, since differentiation preserves compact support.

For the converse inclusion, let $\alpha=f_1\theta_1+f_2\theta_2
\in Z_3^{1,0}\cap\Omega_c^1(G)$, so that \(f_1,f_2\in \C_c^\infty(G)\) satisfying $(X_2X_1-X_3)f_2=X_2^2f_1$.
We use the global coordinates \((a,s,t,r)\) introduced previously, in
which $X_2=\partial_s$ and $X_3=\partial_t$. 
For every \((a,t,r)\), define
\[
F(a,t,r)
:=
\int_{\mathbb R}f_2(a,s,t,r)\,ds.
\]
Since \(f_2\) is compactly supported, \(F\) is a smooth compactly
supported function of \((a,t,r)\). Moreover,
\begin{align*}
\partial_tF(a,t,r)
&=
\int_{\mathbb R}\partial_tf_2(a,s,t,r)\,ds=
\int_{\mathbb R}X_3f_2(a,s,t,r)\,ds\\=&
\int_{\mathbb R}X_2(X_1f_2-X_2f_1)(a,s,t,r)\,ds=
\int_{\mathbb R}\partial_s(X_1f_2-X_2f_1)(a,s,t,r)\,ds
=
0.
\end{align*}
Hence \(F\) is a compactly supported function of $(a,t,r)$ which is also independent of \(t\), which implies that $F\equiv 0$.

Define now
\[
\varphi(a,s,t,r)
:=
\int_{-\infty}^{s}f_2(a,\sigma,t,r)\,d\sigma\,,
\]
then $X_2\varphi=\partial_s\varphi=f_2$, with \(\varphi\in C_c^\infty(G)\). Indeed, \(\varphi\) vanishes
below the \(s\)-support of \(f_2\), while above the \(s\)-support of
\(f_2\) one has
\[
\varphi(a,s,t,r)
=
\int_{\mathbb R}f_2(a,\sigma,t,r)\,d\sigma
=
F(a,t,r)
=
0.
\]
Compactness in the remaining variables follows directly from the
compact support of \(f_2\).

We next show that
\[
X_1f_2-X_2f_1=X_3\varphi.
\]
Using \(X_2(X_1f_2-X_2f_1)=X_3f_2\), \(X_2\varphi=f_2\), and
\([X_2,X_3]=0\), we obtain
\begin{align*}
X_2\bigl(X_1f_2-X_2f_1-X_3\varphi\bigr)
&=
X_2X_1f_2-X_2^2f_1-X_2X_3\varphi=
X_3f_2-X_3X_2\varphi
=
0.
\end{align*}
Thus \(X_1f_2-X_2f_1-X_3\varphi\) is independent of \(s\). On the other hand, it
is compactly supported, since both $X_1f_2-X_2f_1$ and \(\varphi\) are compactly
supported. A compactly supported function that is independent of \(s\)
must vanish identically, and so $X_1f_2-X_2f_1=X_3\varphi$.

Finally, consider
\[
q:=f_1-X_1\varphi.
\]
Then $X_2q=X_2f_1-X_2X_1\varphi=
X_2f_1-X_1X_2\varphi+X_3\varphi=
X_2f_1-X_1f_2+X_1f_2-X_2f_1=0$, which means that \(q\) is independent of \(s\).
Since \(q\) is again compactly supported, it follows that \(q=0\).
Consequently, $f_1=X_1\varphi$.

We have therefore proved that
\[
\alpha
=
X_1\varphi\,\theta_1+X_2\varphi\,\theta_2
=
d_1\varphi
\]
for some \(\varphi\in C_c^\infty(G)\), completing the proof.
\end{proof}
\begin{remark}
The preceding lemma may alternatively be viewed as a consequence
of the injectivity of \(d_c^2\) on compactly supported Rumin
2-forms of weight \(4\), together with the vanishing of the
compactly supported de Rham cohomology in degree \(1\).

Let us first observe that the restriction of \(d_c^2\) to compactly
supported Rumin 2-forms of weight \(4\) has trivial kernel. Since 
\[
\frac{Z_{2}^{4,-2}}{B_{3}^{4,-2}}
=
E_0^2\cap\Omega^{4,-2}(G)\bigr),
\]
every compactly supported element is represented by a form $\beta=f\theta_1\wedge\theta_4$ with $f\in\C_c^\infty(G)$. If \(d_c^2\beta=0\), then $X_2^2f=0$, and in the global coordinates \((a,s,t,r)\) used above, \(X_2=\partial_s\),
and hence $\partial_s^2f=0$.

Thus, for every fixed \((a,t,r)\), there exist smooth functions
\(A=A(a,t,r)\) and \(B=B(a,t,r)\) such that
\[
f(a,s,t,r)=sA(a,t,r)+B(a,t,r).
\]
Since \(f\) is compactly supported in the \(s\)-variable, necessarily
\(A=B=0\). Therefore \(f=0\), and hence
\[ \ker \left(\Delta_2\colon \frac{Z_2^{4,-2}\cap\Omega_c^2(G)}{B_3^{4,-2}\cap\Omega_c^2(G)}\longrightarrow \frac{Z_1^{6,-3}\cap\Omega_c^3(G)}{B_2^{6,-3}\cap\Omega_c^3(G)}\right)
=\ker\left(
d_c^2\colon E_0^2\cap\Omega_c^{4,-2}
\longrightarrow
E_0^3\cap\Omega_c^3(G)
\right)
=
\{0\}\,.
\]

The relevant portion of the second spectral complex is
\[
\frac{Z_{3}^{1,0}\cap\Omega^1_c(G)}{B_{1}^{1,0}\cap\Omega^{1}_c(G)}
\xrightarrow{\Delta_3}
\frac{Z_{2}^{4,-2}\cap\Omega^{2}_c(G)}{B_{3}^{4,-2}\cap\Omega_c^2(G)}
\xrightarrow{\Delta_2}
\frac{Z_{1}^{6,-3}\cap\Omega_c^3(G)}{B_{2}^{6,-3}\cap\Omega_c^3(G)}.
\]
Since this is a complex, $\Delta_2\circ\Delta_3=0$, and so the injectivity of \(\Delta_2\)  implies $\Delta_3$ must be the zero map.

The operators involved in the construction of the Rumin and spectral
complexes preserve compact support. Hence the compactly supported
version of the second spectral complex carries the compactly supported
de Rham cohomology in degree \(1\). Therefore, using the fact that the Engel group is diffeomorphic to \(\mathbb R^4\), we have
\begin{align*}
0=H_c^1(G)
&\simeq
\frac{\ker\Delta_3}{\operatorname{Im}\Delta_1}\simeq
\frac{Z_{3}^{1,0}\cap\Omega_c^1(G)}{B_{2}^{1,0}\cap\Omega^1_c(G)},
\end{align*}
where we used \(\Delta_3=0\) and
\[
\operatorname{Im}\Delta_1
=
\frac{B_{2}^{1,0}\cap\Omega^1_c(G)}{B_{1}^{1,0}\cap\Omega_c^1(G)}\,.
\]
It then follows that $Z_{3}^{1,0}\cap\Omega_c^1(G)=B_{2}^{1,0}\cap\Omega_c^1(G)$, 
or equivalently $Z_3^{1,0}\cap\Omega_c^1(G)
=
d_1\bigl(\C_c^\infty(G)\bigr)$.

\end{remark}

\section{Sobolev mappings and Pansu derivative}

In this section, we recall the notion of Sobolev mappings between Carnot groups and some of their basic properties, in particular almost everywhere Pansu differentiability (often shortened as $P$-differentiability).
Let $G$ be an arbitrary Carnot group, $U\subset G$ an open set, and $(X_1, \ldots X_m)$ an orthonormal basis of the horizontal layer $V_1$ of its Lie algebra $\mathfrak g$.

\begin{definition}[Scalar-valued Sobolev functions]\label{def: Sobolev for R}
 Let $1\le q\le \infty$. A function $\varphi \colon U \rightarrow \mathbb{R}$
belongs to $W^{1,q}(U)$ if $\varphi\in L^q(U)$ and its distributional derivatives $X_i\varphi$ belong to $L^q(U)$ for every $i=1,\ldots, m$.

We say that $\varphi\in W^{1,q}_{\mathrm{loc}}(U)$ if
$\varphi|_{U'}\in W^{1,q}(U')$ for every open set
$U'\Subset U$.
\end{definition}

Definition \ref{def: Sobolev for R} admits a natural extension to maps taking values in a separable metric space, and in particular to maps between Carnot groups. Several equivalent characterizations of the space of Sobolev maps between Carnot groups are available; see, for instance, \cite[Proposition 4.2]{Vodopyanov2000}. We recall here the metric-valued formulation adopted by Vodop\`yanov, which,  when the domain is Euclidean, reduces to Reshetnyak's definition \cite{Reshetnyak1997}. Let $\widetilde G$ be another Carnot group endowed with a homogeneous distance $d_{\widetilde G}$. Denote by $\widetilde{V}_1$ its horizontal layer.

\begin{definition}
    Let $1\le q\le \infty$. We say that a measurable map $\varphi: U \rightarrow \widetilde G$ belongs to the Sobolev space $W^{1,q}_{\mathrm{loc}}(U, \widetilde G)$ if the following hold:
    \begin{itemize}
        \item for every $z \in \widetilde G$, the function $\varphi_z : U \rightarrow \mathbb R$, defined as
        \begin{equation*}
            \varphi_z(x) = d_{\widetilde G} (\varphi(x), z)
        \end{equation*}
        is in $W^{1,q}_{\mathrm{loc}}(U)$;
        \item there exists a non-negative function $g \in L^q_{\mathrm{loc}}(U)$ such that for every $z \in \widetilde G$,  $|\nabla_{G} \varphi_z (x)| \le g(x)$ for a.e. $x \in U$, where $\nabla_{G} \varphi_z = \sum_{i=1}^m (X_i\varphi_z) X_i$ denotes the horizontal distributional gradient of $\varphi_z$.
    \end{itemize}
\end{definition}

An equivalent characterization of Sobolev mappings is given in terms of
absolute continuity along horizontal lines. More precisely, after modifying $\varphi$ on a set of measure zero, for each
$i=1,\ldots,m$ the map $\varphi$ is locally absolutely continuous
along almost every horizontal line $\gamma(t)=x\exp_G(tX_i)$.

At almost every point on such a line, its left-trivialised derivative
is horizontal (see \cite[Proposition 4.2]{Vodopyanov2000})
 and we denote it by \[
X_i\varphi(x)
:=
(dL_{\varphi(x)^{-1}})_{\varphi(x)}
\left.
\frac{d}{dt}
\right|_{t=0}
\varphi\bigl(x\exp_G(tX_i)\bigr)
\in\widetilde V_1\,.
\]
Moreover, by \cite[Proposition 4.3]{Vodopyanov2000},
\begin{align*}
\exp_{\widetilde G}\bigl(X_i\varphi(x)\bigr)
=
\lim_{t\to 0}
\widetilde\delta_{1/t}
\left(
\varphi(x)^{-1}
\varphi\bigl(x\exp_G(tX_i)\bigr)
\right).
\end{align*}
The vectors $X_i\varphi(x)$ determine the formal horizontal differential $$D_h \varphi (x) \colon V_1 \rightarrow \widetilde{V}_1\ \text{ defined by } D_h\varphi(x)X_i = X_i \varphi(x)\ \text{ for } i=1,\ldots,m\,.$$
A non-trivial consequence of Sobolev regularity is that, at almost every
point, this horizontal linear map is compatible with the Lie algebra
relations. More precisely, by
\cite[Proposition 4.4]{Vodopyanov2000}, if $\varphi\in W^{1,q}_{\mathrm{loc}}(U,\widetilde G)$, then $\varphi$ is approximately $P$-differentiable at almost every
$x\in U$. 
Thus, for almost every
$x$, there exists a contact Lie group homomorphism $\phi_x\colon G \longrightarrow \widetilde G$ such that the rescaled maps
\begin{equation}\label{eq: rescaling maps Pansu}
        \varphi_{x,t}(y) := \Tilde{\delta}_t^{-1}\left(\varphi(x)^{-1} \varphi (x \delta_t y)\right).
    \end{equation}
converge locally in measure to $\phi_x$ as $t \rightarrow 0$. At every such point of approximate $P$-differentiability, the Lie algebra homomorphism 
\begin{equation*}
    D\varphi(x)
:=
\exp_{\widetilde G}^{-1}
\circ \phi_x
\circ \exp_G
\colon
\mathfrak g\longrightarrow\widetilde{\mathfrak{g}}
\end{equation*}
is determined by the formal horizontal differential $D_h\varphi(x)$.

When the convergence of the rescaled maps is locally uniform, one obtains the usual notion of Pansu differentiability. To be more precise, denote by $\delta_t$ and $\Tilde{\delta}_t$ the families of dilations on $G$ and $\widetilde G$, respectively.

\begin{definition}[Pansu differentiability]
We say that $\varphi$ is Pansu differentiable at $x\in U$ if there
exists a homogeneous (i.e. it commutes with dilations) Lie group homomorphism
$\phi_x\colon G\to\widetilde G$ such that
\[
\varphi_{x,t}\longrightarrow\phi_x
\ \text{ locally uniformly on $G$ as }t\to 0\,.
\]
Equivalently, for every compact set $K\subset G$,
\[
\sup_{y\in K}
d_{\widetilde G}\bigl(\varphi_{x,t}(y),\phi_x(y)\bigr)
\xrightarrow[t\to 0]{}0\,.
\]
The homomorphism $\phi_x$ is called the Pansu derivative of $\varphi$
at $x$ and is denoted by $D_P\varphi(x)$.
\end{definition}

When the Sobolev exponent
is larger than the homogeneous dimension of the domain, approximate $P$-differentiability can be
strengthened to ordinary Pansu differentiability. More precisely, the
following holds.

\begin{theorem}[{\cite[Corollary 4.1]{Vodopyanov2000}}]\label{thm: Vodopyanov Pansu diff}
    Let $\varphi \colon U \rightarrow \widetilde G$ be a map in $W^{1,q}_{\mathrm{loc}}(U, \widetilde G)$ and let $Q$ denote the homogeneous dimension of $G$. If $q>Q$, then
$\varphi$ is Pansu differentiable at almost every point $x\in U$.
Moreover, 
\begin{equation*}
    D_P\varphi(x)
=
\exp_{\widetilde G}
\circ D\varphi(x)
\circ\exp_G^{-1},
\end{equation*}
where $D\varphi(x)\colon\mathfrak g\longrightarrow \widetilde{\mathfrak{g}}$ is the contact Lie algebra homomorphism determined by $D_h\varphi(x)$.
\end{theorem}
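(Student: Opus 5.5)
The statement is Vodop'yanov's result, which the paper cites as an external theorem. I would reconstruct its proof in three steps.

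\textbf{Step 1: approximate $P$-differentiability.} By \cite[Proposition 4.4]{Vodopyanov2000}, recalled above, $\varphi$ is approximately $P$-differentiable at almost every $x\in U$. The limit is the homogeneous homomorphism $\phi_x=\exp_{\widetilde G}\circ D\varphi(x)\circ\exp_G^{-1}$, where $D\varphi(x)$ is the unique graded Lie algebra homomorphism extending $D_h\varphi(x)$. It is unique because $V_1$ generates $\mathfrak g$, and it is well defined at points where the Lie algebra relations hold for $D_h\varphi(x)$.

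\textbf{Step 2: uniform convergence on compacts.} I would upgrade the convergence of the rescaled maps $\varphi_{x,t}$ from convergence in measure to locally uniform convergence, using the hypothesis $q>Q$. The key tool is a Morrey-type embedding on Carnot groups, valid for metric-valued Sobolev maps via the functions $\varphi_z=d_{\widetilde G}(\varphi(\cdot),z)$ and the common upper gradient $g$. For $q>Q$ and every ball $B(y,\rho)$ with $y,y'\in B(y,\rho)$, it gives
\[
d_{\widetilde G}(\varphi(y),\varphi(y'))\le C\,d(y,y')^{1-Q/q}\Bigl(\int_{B(y,2\lambda\rho)}g^q\Bigr)^{1/q}.
\]
This follows from the subelliptic Poincaré inequality applied to each $\varphi_z$, a Campanato iteration, and a supremum over a countable dense set of $z$.

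I would then rescale. Because $\delta_t$ scales measure by $t^Q$ and distances by $t$, the rescaled map $\varphi_{x,t}$ has upper gradient $g_{x,t}(y)=g(x\delta_ty)$, and
\[
\int_{B(0,R)}g_{x,t}^q=t^{-Q}\int_{B(x,tR)}g^q .
\]
At Lebesgue points of $g^q$ this stays bounded as $t\to0$. So the family $\{\varphi_{x,t}\}$ is equi-Hölder on compact sets, and $\varphi_{x,t}(0)=0$.

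\textbf{Step 3: conclusion.} By Arzelà–Ascoli, every sequence $t_k\to0$ has a locally uniformly convergent subsequence. Its limit must agree almost everywhere with the limit in measure, $\phi_x$. Both functions are continuous, so they agree everywhere. Hence the whole family converges locally uniformly to $\phi_x$, which is Pansu differentiability at every point that is simultaneously a point of approximate differentiability and a Lebesgue point of $g^q$. That is a full-measure set.

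\textbf{Main obstacle.} I expect Step 2 to be the hardest part: obtaining the Morrey estimate for maps with values in the metric space $\widetilde G$ with constants uniform in $z$, so that it survives the supremum over $z$. I would first try to prove it through the scalar functions $\varphi_z$ together with the pointwise bound by $g$.
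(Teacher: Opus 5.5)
Your reconstruction is correct. There is no argument in the paper to compare it with: the paper imports the statement from Vodop'yanov and recalls his Proposition~4.4 (approximate $P$-differentiability almost everywhere, with the limit homomorphism determined by $D_h\varphi(x)$) only as background. Your route makes explicit what the citation hides, namely that the hypothesis $q>Q$ is used exactly once. The Morrey estimate, applied to the functions $\varphi_z$ with the common bound $g$ and then to the rescalings $\varphi_{x,t}$ (whose gradients are bounded by $g(x\delta_t\,\cdot)$), gives equi-H\"older bounds on compact sets. This holds at every $x$ where $t^{-Q}\int_{B(x,tR)}g^q$ stays bounded. Arzel\`a--Ascoli together with uniqueness of limits in measure then upgrades convergence in measure to locally uniform convergence and identifies the limit with $\phi_x$. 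This yields the formula $D_P\varphi(x)=\exp_{\widetilde G}\circ D\varphi(x)\circ\exp_G^{-1}$ at no extra cost. The citation buys brevity and the full metric-valued generality of Vodop'yanov's framework. Your argument buys a proof that is self-contained modulo Proposition~4.4 and a scalar Morrey inequality, with an explicit description of the full-measure set: approximate $P$-differentiability points that are also Lebesgue points of $g^q$.

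When you write it out, make three routine points explicit. First, Pansu differentiability is a pointwise notion, so you must pass to the continuous representative and check that approximate $P$-differentiability survives. It does for $x$ outside the null modification set $N$, since $\varphi(x)$ is unchanged and $\delta_{1/t}(x^{-1}N)$ is null. Second, the supremum over a countable dense set of $z$ must be taken off the union of the null sets where $d_{\widetilde G}(\varphi(\cdot),z)$ differs from its continuous representative; afterwards you extend by continuity. Third, the balls $B(x,tR')$ entering the rescaled estimate must lie in $U$. This holds for small $t$ because $U$ is open; equivalently, the domains $\delta_{1/t}(x^{-1}U)$ of $\varphi_{x,t}$ exhaust $G$.
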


\medskip
We now specialise the preceding discussion to maps between Engel groups and describe explicitly the structure of the Pansu differential and the algebraic restrictions imposed by the homomorphism property.

Let $G$ and $\widetilde G$ be Engel groups, with stratified Lie algebras generated by adapted bases $\{X_1, X_2, X_3, X_4\}$ and $\{\widetilde{X}_1, \widetilde{X}_2, \widetilde{X}_3, \widetilde{X}_4\}$ satisfying
\begin{align*}
    [X_1,X_2]=X_3\, ,\, [X_1,X_3]=X_4\,,
\end{align*}
and
\begin{align*}
    [\widetilde{X}_1,\widetilde{X}_2]=\widetilde{X}_3\, ,\, [\widetilde{X}_1,\widetilde{X}_3]=\widetilde{X}_4\,.
\end{align*}

Let $\varphi: U \rightarrow \widetilde G$ belong to $W^{1,q}_{\mathrm{loc}}(U, \widetilde G)$ for $q> \dim_H G = 7$. We identify $G$ with $\mathbb{R}^4$ through the exponential coordinates of the first kind, 
\begin{equation*}
    (x_1,x_2,x_3,x_4) \longmapsto \exp_G\Big(\sum_{i=1}^4 x_i X_i\Big)\,.
\end{equation*}
Accordingly, we write 
\begin{equation*}
    \varphi(x) = \exp_{\widetilde G} \Big( \sum_{j=1}^4 \varphi_j(x)\widetilde{X}_j \Big)\,.
\end{equation*}
By Theorem \ref{thm: Vodopyanov Pansu diff}, the map $\varphi$ is Pansu differentiable at almost every $x\in U$. At every such point, the induced contact Lie algebra homomorphism 
\begin{equation*}
    D\varphi(x)= \exp^{-1}_{\widetilde G} \circ D_P\varphi(x)\circ \exp_G\colon \mathfrak g \longrightarrow \widetilde{\mathfrak g}
\end{equation*}
restricts on the horizontal layer to the formal horizontal differential 
\begin{equation*}
    D_h\varphi(x)\colon V_1 \longrightarrow \widetilde V_1\,.
\end{equation*}
With respect to the bases $(X_1, X_2)$ and $(\widetilde{X}_1, \widetilde{X}_2)$
the matrix representing $D_h\varphi(x)$ is

\begin{equation*}
D_h\varphi(x) =
    \begin{pmatrix}
    X_1\varphi_1(x) & X_2\varphi_1(x) \\
        X_1\varphi_2(x) & X_2\varphi_2(x) 
\end{pmatrix}.
\end{equation*}
Since $D\varphi(x)$ is a Lie algebra homomorphism, its action on the higher layers is determined by its restriction to $V_1$. Indeed,
\begin{align*}
    D\varphi(x) (X_3) &= [D\varphi(x) (X_1), D\varphi(x) (X_2)] = [D_h\varphi(x) (X_1), D_h\varphi(x) (X_2)] = \det (D_h\varphi(x)) \widetilde{X}_3\,,\\
    D\varphi(x) (X_4) &= [D\varphi(x) (X_1), D\varphi(x) (X_3)] = [D_h\varphi(x) (X_1), D\varphi(x) (X_3)] = X_1 \varphi_1(x) \det (D_h\varphi(x)) \widetilde{X}_4\,.
\end{align*}
Therefore, with respect to the adapted bases $(X_1,X_2, X_3, X_4)$ and $(\widetilde{X}_1,\widetilde{X}_2, \widetilde{X}_3, \widetilde{X}_4)$, the matrix representing $D\varphi(x)$ is
\begin{equation*}
    \begin{pmatrix}
    X_1\varphi_1(x) & X_2\varphi_1(x) & 0 & 0  \\
        X_1\varphi_2(x) & X_2\varphi_2(x) & 0 & 0\\ 0 & 0 & \det(D_h\varphi(x)) & 0 \\
        0 & 0 & 0 & X_1 \varphi_1(x) \det (D_h\varphi(x))        
\end{pmatrix}.
\end{equation*}
The homomorphism property also imposes a compatibility condition on the horizontal differential coming from the vanishing bracket relation $[X_2,X_3] = 0$. Indeed, 
\begin{equation*}
    0 = D\varphi( [X_2, X_3] ) = [D\varphi(x) (X_2), D\varphi(x) (X_3)] = X_2\varphi_1 (x) \det (D_h\varphi(x)) \widetilde{X}_4 \,.
\end{equation*}
It follows that, for almost every $x \in U$,
\begin{equation*}
    X_2\varphi_1(x)=0 \quad\text{or}\quad \det(D_h\varphi(x))=0. 
\end{equation*}

Thus, although the formal horizontal differential $D_h\varphi(x):V_1\longrightarrow\widetilde V_1$ is a linear map between the horizontal layers, it cannot be arbitrary. The requirement that it extend to a Lie algebra homomorphism forces it to preserve not only the non-vanishing bracket relations defining the higher layers, but also the vanishing bracket relations of the Engel algebra.

\section{Spectral currents on  the Engel group}
In the classical de Rham theory, currents are defined as continuous linear functionals on compactly supported smooth forms endowed with their natural LF topology. The exterior differential extends to currents by duality, and every smooth differential form naturally defines a current by integration. In this way, after the usual reversal of degrees, the de Rham complex embeds into the corresponding complex of currents.

Our aim in this section is to develop an analogous construction for the spectral complexes introduced above. We focus on the case of the Engel group to show that the spaces of compactly supported spectral forms carry natural locally convex topologies, that their continuous duals inherit the transposed spectral differentials, and that the spectral complexes embed naturally into these complexes of currents.

Let $G$ be the Engel group where $n =4$ and $Q=7$ are the topological and homogeneous dimensions of $G$, respectively.

We begin with the spectral complexes constructed from compactly supported smooth forms. We use throughout the convention introduced in Remark \ref{rmk: compact support convention for Z and B}: 
\[\text{either }\ 
Z_r^{p,k-p}(G)\cap\Omega_c^k(G)
\ \text{ or }\
B_l^{p,k-p}(G)\cap\Omega_c^k(G)
\]
means that the defining equations, including all auxiliary forms appearing in them, are considered entirely within the compactly supported de Rham complex.

Let us define the space of compactly supported spectral forms by
\begin{align}\label{eq: compactly supported spectral forms}
\mathscr D_{r,l}^{p,k-p}(G)
:=
\big(Z_{r}^{p,k-p}(G)\cap\Omega_c^k(G)\big)\big/
\big(B_{l}^{p,k-p}(G)\cap\Omega_c^k(G)\big).
\end{align}
We need to introduce a topology on the numerator of
\eqref{eq: compactly supported spectral forms}. Let \(K\subset G\) be compact and set
\begin{align*}
\Omega_K^{p,k-p}(G)
:=
\{
\omega\in\Omega_c^{p,k-p}(G)\mid
\operatorname{spt}\omega\subseteq K
\}\,.
\end{align*}
The scalar product fixed in the construction of the Rumin complex (see Subsection \ref{subsection: scalar product}) induces a norm, denoted by \(\lvert\cdot\rvert\), on each exterior power
\(\bigwedge^k\mathfrak g^\ast\).

For \(N\in\mathbb N\), define
\begin{align}\label{eq: seminorms fixed support forms}
\mathfrak p_{K,N}(\omega)
:=
\max_{\substack{0\leq s\leq N\\
i_1,\ldots,i_s\in\{1,2,3,4\}}}
\sup_{x\in K}
\left|
X_{i_1}\cdots X_{i_s}\omega(x)
\right|\,,
\end{align}
where, for \(s=0\), the corresponding operator is understood to be the identity. The left-invariant vector fields \(X_i\)s act on the coefficient functions of \(\omega\) with respect to the left-invariant basis of forms. More explicitly, if $\omega=\sum_{I} \omega_I\theta_I$
with respect to the left-invariant basis of forms, then
\[
X_{i_1}\cdots X_{i_s}\omega
:=
\sum_I
X_{i_1}\cdots X_{i_s}(\omega_I)\,\theta_I\,.
\]

Since $(X_1,X_2,X_3,X_4)$ form a global smooth frame of \(TG\), the seminorms
$\{\mathfrak p_{K,N}\}_{N\in\mathbb N}$
define the usual Fréchet topology on
\(\Omega_K^{p,k-p}(G)\).
We then set
\begin{align*}
Z_r^{p,k-p}(G;K)
:=\big(
Z_{r}^{p,k-p}(G)\cap\Omega^k_c(G)\big)
\cap\Omega_K^{p,k-p}(G)\,.
\end{align*}

\begin{lemma}\label{lem: Z fixed support Frechet}
For every compact set \(K\subset G\) and every space
\(Z_r^{p,k-p}(G)\) occurring in the spectral complexes of the Engel group, the space
\(Z_r^{p,k-p}(G;K)\), endowed with the restrictions of the seminorms
\(\{\mathfrak p_{K,N}\}_{N\in\mathbb N}\), is a Fr\'echet space.
\end{lemma}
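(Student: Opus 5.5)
The plan is to show that $Z_r^{p,k-p}(G;K)$ is a closed subspace of the Fréchet space $\Omega_K^{p,k-p}(G)$; closed subspaces of Fréchet spaces are Fréchet with the restricted seminorms. The only subtlety is the convention of Remark \ref{rmk: compact support convention for Z and B}: the defining conditions involve compactly supported auxiliary witnesses, which are existentially quantified and need not be supported in $K$. So a direct closedness argument could fail. We therefore replace the abstract definition by the explicit description of each space obtained in Subsection \ref{subsection: spectral complexes in Engel}, case by case, and check that the compactly supported version is cut out by conditions that are continuous in the $\mathfrak p_{K,N}$ topology.

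We go through the list of cycle spaces appearing in the two spectral complexes:
\[
Z_1^{0,0},\ Z_2^{1,0},\ Z_3^{1,0},\ Z_3^{3,-1},\ Z_2^{4,-2},\ Z_1^{6,-3},\ Z_1^{7,-3}.
\]

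\textbf{The $Z_1$ spaces.} For $r=1$ the condition is $d_0\alpha=0$, which involves no witnesses. Since $d_0$ is $\C^\infty(G)$-linear and fibrewise constant, $Z_1^{p,k-p}(G;K)$ is the kernel of a continuous map, namely the set of $\omega\in\Omega_K^{p,k-p}(G)$ whose coefficients in certain left-invariant directions vanish. This covers $Z_1^{0,0}$, $Z_1^{6,-3}$ and $Z_1^{7,-3}$. In these cases the space is in fact all of $\Omega_K^{p,k-p}$ or a coordinate subspace of it.

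\textbf{$Z_2^{1,0}$ and $Z_3^{3,-1}$.} In the smooth setting both equal the full space $\Omega^{p,k-p}(G)$, because $d_c^1$ vanishes for weight reasons. We need to check this persists with compact witnesses. For $Z_2^{1,0}$ the witness $z_2$ must satisfy $d_1\alpha=d_0z_2$. Since $d_0$ is fibrewise and $d_1\alpha$ has compact support, we can take $z_2=d_0^{-1}d_1\alpha$, which is compactly supported because $\Pi_0 d_1\alpha=0$.

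For $Z_3^{3,-1}$ we repeat this with $z$'s built from $d_0^{-1}$ applied to compactly supported forms, following Proposition \ref{prop: Z and B in terms of d_c}. The Rumin witnesses $\omega$ that are needed can be chosen as $0$ or as $d_0^{-1}$-expressions, using the fact that the relevant $d_c^1$ targets vanish. Hence $Z_r(G;K)=\Omega_K^{p,k-p}$, which is trivially Fréchet.

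\textbf{$Z_2^{4,-2}$.} Here $Z_2^{4,-2}(G;K)$ is the subspace with vanishing $\theta_2\wedge\theta_4$ coefficient, again a closed coordinate subspace. The witness $z$ is once more given by $d_0^{-1}$ applied to a compactly supported form.

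\textbf{$Z_3^{1,0}$, the main case.} Here we invoke Lemma \ref{lemma: Z3 a supporto compatto in deg 1}, which gives
\[
Z_3^{1,0}\cap\Omega_c^1(G)=\{f_1\theta_1+f_2\theta_2\mid f_i\in\C_c^\infty(G),\ (X_2X_1-X_3)f_2=X_2^2f_1\}.
\]
Indeed, the proof of that lemma shows that compactly supported solutions of this equation lie in $d_1(\C_c^\infty)$. Conversely, $d_1\varphi$ has compactly supported witnesses, since $z$'s of the form $d_0^{-1}(\cdot)$ applied to compact forms are compact. Thus $Z_3^{1,0}(G;K)$ is the kernel in $\Omega_K^{1,0}$ of the operator $(f_1,f_2)\mapsto (X_2X_1-X_3)f_2-X_2^2f_1$. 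This is a differential operator, continuous since $\mathfrak p_{K,N+2}$ controls its $\mathfrak p_{K,N}$ norm. Hence the space is closed.

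I expect the main obstacle to be exactly this witness issue: verifying in each case that the compactly supported definition coincides with the pointwise equation description. For $Z_3^{1,0}$, Lemma \ref{lemma: Z3 a supporto compatto in deg 1} settles it. Completeness then follows from closedness in all cases.
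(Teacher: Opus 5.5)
Your proposal is correct and follows the same route as the paper. Each $Z_r^{p,k-p}(G;K)$ is realised as the kernel in $\Omega_K^{p,k-p}(G)$ of a continuous finite-order differential operator: the zero operator, a coefficient projection, or $(f_1,f_2)\mapsto(X_2X_1-X_3)f_2-X_2^2f_1$. It is therefore closed and hence Fr\'echet. Your case-by-case check that compactly supported witnesses exist is more explicit than the paper, which reads these kernels directly off the unrestricted smooth descriptions.

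For $Z_3^{1,0}$ you do not need Lemma~\ref{lemma: Z3 a supporto compatto in deg 1}. Since $d_0$ is injective on $\Omega^{2,-1}(G)$ and $\Omega^{3,-2}(G)$, the witnesses $z_2$ and $z_3$ are uniquely determined by differential expressions in $f_1,f_2$. They are therefore automatically supported in $\operatorname{spt}\alpha$.
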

\begin{proof}
The space $\Omega_K^{p,k-p}(G)$ is a closed linear subspace of the Fr\'echet space $\Omega_K^k(G)$ and is therefore itself a Fr\'echet space. From the explicit descriptions in Subsection \ref{subsection: spectral complexes in Engel}, we have that for every choice of $(r,p,k)$ occurring in the spectral complexes of the Engel group, there exists a continuous linear differential operator of finite order $P_{r,p,k}$ from $\Omega_K^{p,k-p}(G)$ 
into a finite product of fixed-support spaces of smooth forms such that
\[
Z_r^{p,k-p}(G;K)=\ker P_{r,p,k}\,.
\]
Here $P_{r,p,k}$ may be the zero operator or a zero-order coefficient projection. Since differential operators are continuous with respect to the usual Fr\'echet topology on fixed-support smooth forms, $Z_r^{p,k-p}(G;K)$ is closed in $\Omega_K^{p,k-p}(G)$. Hence it is a Fr\'echet space.
\end{proof}
We endow $Z_{r}^{p,k-p}(G)\cap\Omega_c^k(G)$ with the locally convex subspace
topology inherited from $\Omega_c^{p,k-p}(G)$, equipped with its usual
LF-topology. On each fixed-support subspace
$Z_r^{p,k-p}(G;K)$, this topology restricts to the Fr\'echet topology
described above.

We then endow $
\mathscr D_{r,l}^{p,k-p}(G)$
with the corresponding quotient topology. This is a locally convex
topology, although it need not be Hausdorff.

\begin{remark}\label{rmk: closedness of B and Hausdorff quotient}
The quotient topology on
\(\mathscr D_{r,l}^{p,k-p}(G)\) is Hausdorff if and only if
\(B_{l}^{p,k-p}(G)\cap\Omega_c^k(G)\) is closed in
\(Z_{r}^{p,k-p}(G)\cap\Omega_c^k(G)\). Thus, the closedness of the spaces
\(Z_r^{p,k-p}(G;K)\), established in Lemma
\ref{lem: Z fixed support Frechet}, does not by itself imply that the spectral quotient is Hausdorff.

Regardless of closedness, the continuous dual of the quotient is naturally identified with the annihilator of \(B_{l}^{p,k-p}(G)\cap\Omega_c^k(G)\):
\begin{align}\label{eq: dual quotient as annihilator}
\left(\mathscr D_{r,l}^{p,k-p}(G)\right)'
\cong
\{
T\in\left(Z_{r}^{p,k-p}(G)\cap\Omega_c^k(G)\right)' \mid
T(\beta)=0
\text{ for every }
\beta\in B_{l}^{p,k-p}(G)\cap\Omega_c^k(G)
\}\,.
\end{align}
Here the prime denotes the continuous dual.

If \(B_{l}^{p,k-p}(G)\cap\Omega_c^k(G)\) is not closed, this dual depends only on its closure and therefore coincides with the dual of the corresponding Hausdorff quotient
\[
\big(Z_{r}^{p,k-p}(G)\cap\Omega_c^k(G)\big)\big/
\overline{\big(B_{l}^{p,k-p}(G)\cap\Omega_c^k(G)\big)}\,.
\]
\end{remark}

\begin{remark}
The proof of Lemma \ref{lem: Z fixed support Frechet} relies on the
explicit descriptions of the spaces \(Z_r^{p,k-p}(G)\) obtained for the
Engel group. These descriptions express each
\(Z_r^{p,k-p}(G;K)\) as the kernel of a continuous linear differential
operator of finite order on the corresponding fixed-support Fr\'echet
space. 
\end{remark}




We now define spectral currents as continuous linear functionals on the corresponding spaces of compactly supported spectral forms.

\begin{definition}[Spectral current]\label{def: currents}
The space of $(r,l)$-spectral currents of bidegree $(p,k-p)$ on $G$ is
\begin{equation*}
\mathscr C_{r,l}^{p,k-p}(G) := \bigl(\mathscr D_{l,r}^{Q-p,n - k-(Q-p)}(G)\bigr)'= \bigr\{ T \in \bigl( \mathscr D_{l,r}^{Q-p,n - k-(Q-p)}(G)\bigr)^*  \mid  T \text{ is continuous }\}.
\end{equation*}
\end{definition}

The spectral differentials on compactly supported spectral forms induce
operators on spectral currents by transposition. More precisely, suppose
that one of the spectral differentials on test forms is
\[
\Delta_r\colon
\mathscr D_{r,m}^{Q-p-r,\,
(n-k-1)-(Q-p-r)}(G)
\longrightarrow
\mathscr D_{l,r}^{Q-p,\,
(n-k)-(Q-p)}(G)\,.
\]
For
\(T\in\mathscr C_{r,l}^{p,k-p}(G)\), define
\begin{equation}\label{eq: pairing duali}
\left\langle\partial_rT,[\eta]\right\rangle
:=
(-1)^{k+1}
\left\langle T,\Delta_r[\eta]\right\rangle\ \text{ for every }[\eta]\in
\mathscr D_{r,m}^{Q-p-r,\,
(n-k-1)-(Q-p-r)}(G)\,.
\end{equation}

It follows that
\[
\partial_r\colon
\mathscr C_{r,l}^{p,k-p}(G)
\longrightarrow
\mathscr C_{m,r}^{p+r,\,
(k+1)-(p+r)}(G)\,.
\]
We note that the symbol \(\partial_r\) was already used in Section \ref{section 1} to denote the operators \eqref{eq: operators curly}. Nevertheless, we keep the same notation here for the transposed spectral differential, since its meaning will always be clear from the context.

In the Engel case, each spectral differential is induced by a
support-preserving linear differential operator of finite order.
It is therefore continuous on the corresponding fixed-support spaces
and hence on the spaces of compactly supported spectral forms. By the
definition of the quotient topology, the induced map between the
spectral quotient spaces is continuous. Consequently,
\(\partial_rT\) is again a continuous linear functional. Moreover, since consecutive spectral differentials on test forms
compose to zero, the corresponding transposed operators on spectral
currents also compose to zero.

Thus, as for the spectral complexes, the different admissible choices
of \(p\), \(r\), and \(l\) give rise to the following two complexes of
spectral currents:
\[\begin{tikzcd}
	{\big(\Omega^{7, -3}_c(G)\big)' } & {\mathscr{C}_{3,1}^{1,0}(G)} & {\mathscr{C}_{2,3}^{4,-2}(G)} & {\mathscr{C}_{1,2}^{6,-3}(G)} & {(\mathcal{C}^\infty_c(G))'} \\
	{\big(\Omega^{7, -3}_c(G)\big)' } & {\mathscr C_{2,1}^{1,0}(G)} & {\mathscr C_{3,2}^{3,-1}(G)} & {\mathscr C_{1,3}^{6,-3}(G)} & {(\mathcal{C}^\infty_c(G))'}
	\arrow["{\partial_1}", from=1-1, to=1-2]
	\arrow["{\partial_3}", from=1-2, to=1-3]
	\arrow["{\partial_2}", from=1-3, to=1-4]
	\arrow["{\partial_1}", from=1-4, to=1-5]
	\arrow["{\partial_1}", from=2-1, to=2-2]
	\arrow["{\partial_2}", from=2-2, to=2-3]
	\arrow["{\partial_3}", from=2-3, to=2-4]
	\arrow["{\partial_1}", from=2-4, to=2-5]
\end{tikzcd}\]

The spaces appearing in the two complexes above can be described explicitly.

\begin{lemma}
    Let $G$ be the Engel group. The following isomorphisms hold: 
    \begin{enumerate}
        \item\label{caso 0} $\big(\Omega^{7,-3}_c(G)\big)' \cong \mathcal{D}'(G)$,
        \item\label{caso 1} $\mathscr{C}_{3,1}^{1,0}(G) \cong \{T_1\theta_1 + T_2\theta_2 \in \mathcal{D}'(G) \otimes \bigwedge^{1,0}\mathfrak{g}^\ast \mid (X_2X_1-X_3)T_2 = X_2^2T_1\}$,
        \item\label{caso 2} $\mathscr{C}_{2,3}^{4,-2}(G) \cong \mathcal{D}'(G) \otimes \operatorname{span}_{\mathbb{R}}\{\theta_1\wedge \theta_4\} = \mathcal{D}'(G) \otimes E_{2,3}^{4,-2}(G) $,
        \item\label{caso 3} $\mathscr{C}_{1,2}^{6,-3}(G) \cong \mathcal{D}'(G) \otimes \bigwedge^{6,-3}\mathfrak{g}^\ast = \mathcal{D}'(G) \otimes E_{1,2}^{6,-3}(G)$,
        \item\label{caso 4} $\big(\mathcal{C}^\infty_c(G)\big)'\cong\mathcal{D}'(G)\otimes \bigwedge^4\mathfrak{g}^\ast$,
        \item\label{caso 5} $\mathscr{C}_{2,1}^{1,0}(G) \cong \mathcal{D}'(G) \otimes \bigwedge^{1,0}\mathfrak{g}^\ast = \mathcal{D}'(G) \otimes E_{2,1}^{1,0}(G)$,
        \item\label{caso 6} $\mathscr{C}_{3,2}^{3,-1}(G) \cong \mathcal{D}'(G) \otimes \operatorname{span}_{\mathbb{R}}\{\theta_2\wedge \theta_3\} = \mathcal{D}'(G) \otimes E_{3,2}^{3,-1}(G) $,
        \item\label{caso 7} $\mathscr{C}_{1,3}^{6,-3}(G) \cong  \left(\mathcal{D}'(G) \otimes \bigwedge^{6,-3}\mathfrak{g}^\ast\right) \big/ \left\{ T_1 \theta_1 \wedge \theta_3 \wedge \theta_4 + T_2 \theta_2 \wedge \theta_3 \wedge \theta_4 \mid X_2(T_1) = X_1(T_2)\right\}$. 
    \end{enumerate}
    Here $\mathcal{D}'(G) := \big(\mathcal{C}_c^\infty(G)\big)'$ and the vector fields $X_i$ are understood in the distributional sense.
\end{lemma}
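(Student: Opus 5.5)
The plan is to unwind Definition~\ref{def: currents} in each of the eight cases. First compute the test space $\mathscr D_{l,r}^{Q-p,\,n-k-(Q-p)}(G)$ explicitly under the compact-support convention of Remark~\ref{rmk: compact support convention for Z and B}. Then identify its continuous dual using \eqref{eq: dual quotient as annihilator}. Throughout, the pairing between distributional coefficients and test forms is realised by the wedge product: a current $\sum_I T_I\theta_I$ acts on $\sum_J \eta_J\theta_J$ by $\sum T_I(\eta_J)$ times the sign with $\theta_I\wedge\theta_J=\pm\operatorname{vol}$. Since $\Omega_c^{p,q}(G)\cong\C_c^\infty(G)\otimes\bigwedge^{p,q}\mathfrak g^\ast$ topologically (the frame is global and left-invariant), we have $(\Omega_c^{p,q}(G))'\cong\mathcal D'(G)\otimes(\bigwedge^{p,q}\mathfrak g^\ast)^\ast$. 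By the Hodge star, $(\bigwedge^{p,q}\mathfrak g^\ast)^\ast$ is identified with the complementary weight space $\bigwedge^{7-p,\,p+q-3-(7-p)+\dots}\mathfrak g^\ast$, as in \eqref{eq: Hodge star and weights}. This disposes at once of the cases where the test space is a full space $\Omega_c^{p,q}(G)$ with trivial denominator:
\begin{itemize}
\item case~\eqref{caso 0}, with $\mathscr D=\C_c^\infty(G)\otimes\bigwedge^{7,-3}$;
\item case~\eqref{caso 4};
\item case~\eqref{caso 3}, with $\mathscr D_{2,1}^{1,0}=\Omega_c^{1,0}$ since $Z_2^{1,0}\cap\Omega_c=\Omega_c^{1,0}$ and $B_1^{1,0}=0$;
\item case~\eqref{caso 5}, with $\mathscr D_{1,2}^{6,-3}=\Omega_c^{6,-3}$ since $B_2^{6,-3}=0$.
\end{itemize}

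Cases~\eqref{caso 2} and~\eqref{caso 6} need only zero-order bookkeeping. First I will check that the compactly supported versions agree with the smooth ones listed in Subsection~\ref{subsection: spectral complexes in Engel}. For $\mathscr D_{3,2}^{3,-1}$ this means $Z_3^{3,-1}\cap\Omega_c=\Omega_c^{3,-1}$ and $B_2^{3,-1}\cap\Omega_c=\C_c^\infty\,\theta_1\wedge\theta_3$; the latter holds because the witness $c$ with $d_0c=f\theta_1\wedge\theta_3$ is $-f\theta_4$, which is compactly supported. For $\mathscr D_{2,3}^{4,-2}$ it means $Z_2^{4,-2}\cap\Omega_c=\C_c^\infty\,\theta_1\wedge\theta_4$ and $B_3^{4,-2}\cap\Omega_c=0$. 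In case~\eqref{caso 2} the quotient $\Omega_c^{3,-1}/\C_c^\infty\theta_1\wedge\theta_3$ is topologically isomorphic to $\C_c^\infty\theta_2\wedge\theta_3$, because the coefficient projection is a continuous splitting. Its dual is therefore $\mathcal D'\otimes\theta_1\wedge\theta_4$, the factor $\theta_1\wedge\theta_4$ being dual to $\theta_2\wedge\theta_3$ under the wedge. Case~\eqref{caso 6} is symmetric and gives $\mathcal D'\otimes\theta_2\wedge\theta_3$.

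The two genuinely differential cases are annihilator computations.

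Case~\eqref{caso 1}: here $\mathscr C_{3,1}^{1,0}=(\mathscr D_{1,3}^{6,-3})'$, and the test space is $\Omega_c^{6,-3}$ modulo $B_3^{6,-3}\cap\Omega_c$. First, $B_3^{6,-3}\cap\Omega_c$ equals $\{(-X_3f-X_1X_2f)\theta_{134}-X_2^2f\,\theta_{234}: f\in\C_c^\infty\}$; the witness chain for this description is built from $f$ by support-preserving operators, and conversely compact witnesses force $f\in\C_c^\infty$. By \eqref{eq: dual quotient as annihilator}, a current $T_1\theta_1+T_2\theta_2$ lies in the dual if and only if, for all $f\in\C_c^\infty$,
\[
T_1(-X_2^2f)\mp T_2(-X_3f-X_1X_2f)=0 .
\]
Transposing with $X_i^t=-X_i$ (the group is unimodular and the $X_i$ are divergence free), this becomes the distributional equation $(X_2X_1-X_3)T_2=X_2^2T_1$, using $(X_1X_2)^t=X_2X_1$.

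Case~\eqref{caso 7}: here the test space is $\mathscr D_{3,1}^{1,0}=Z_3^{1,0}\cap\Omega_c^1$, and by Lemma~\ref{lemma: Z3 a supporto compatto in deg 1} this equals $d_1(\C_c^\infty(G))$ with the subspace topology from $\Omega_c^{1,0}$. By Hahn--Banach in the locally convex space $\Omega_c^{1,0}$, every continuous functional on this subspace extends. Hence its dual is $(\Omega_c^{1,0})'$ modulo the annihilator of $d_1(\C_c^\infty)$, that is, $\mathcal D'\otimes\bigwedge^{6,-3}$ modulo those $T_1\theta_{134}+T_2\theta_{234}$ with $T_2(X_1f)-T_1(X_2f)=0$ for all $f$ (up to the wedge sign). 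This condition is exactly $X_2T_1=X_1T_2$ distributionally.

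I expect the main obstacles to be the sign conventions in the wedge pairing and, more substantively, the verification that the compactly supported $B$ and $Z$ spaces have the stated descriptions with compactly supported witnesses. The latter matters especially for $B_3^{6,-3}\cap\Omega_c$, where the smooth identification of Lemma~\ref{lemma: B esplicito nel caso liscio in grado 3} fails. I will therefore work directly from the witness description rather than from that lemma.
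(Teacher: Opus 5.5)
Your proposal is correct and follows the paper's proof essentially step by step. Cases (0) and (2)--(6) come from wedge duality of complementary weight spaces, case (1) from the annihilator of $B_3^{6,-3}\cap\Omega_c^3$ transposed via $X_i^t=-X_i$, and case (7) from Hahn--Banach combined with Lemma~\ref{lemma: Z3 a supporto compatto in deg 1}. Your explicit check of the compactly supported witnesses is a useful addition that the paper leaves implicit. Two small fixes: the correct sign in your $\mp$ is the upper one, since $\theta_1\wedge\theta_2\wedge\theta_3\wedge\theta_4=\operatorname{vol}=-\theta_2\wedge\theta_1\wedge\theta_3\wedge\theta_4$, and your complementary weight space should read $\bigwedge^{7-p,\,-3-q}\mathfrak g^\ast$.
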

\begin{proof}
    We first consider \eqref{caso 0} and \eqref{caso 4}. Since
    \begin{equation*}
    \dim\left(\bigwedge\nolimits^4\mathfrak g^*\right)=1, 
    \end{equation*}
    fixing the left-invariant volume form $\operatorname{vol} = \theta_1\wedge\theta_2\wedge\theta_3\wedge\theta_4$  gives the natural identification 
    \begin{equation*}
        \Omega_c^{7,-3}(G) = \mathcal C^\infty_c(G)\operatorname{vol} \cong \mathcal C^\infty_c(G).
    \end{equation*}
     Taking continuous duals, we obtain 
     \begin{equation*}
         \bigl(\Omega_c^{7,-3}(G)\bigr)' \cong \bigl(\mathcal C^\infty_c(G)\bigr)' = \mathcal D'(G),
     \end{equation*}
     which proves \eqref{caso 0} and \eqref{caso 4}.
     We next prove \eqref{caso 2}, \eqref{caso 3}, \eqref{caso 5}, and \eqref{caso 6}. The wedge product induces a non-degenerate pairing between components of complementary degree and weight. In particular,
    \begin{align*}
        \left( \operatorname{span}_{\mathbb R} \{\theta_2\wedge\theta_3\} \right)^* \cong \operatorname{span}_{\mathbb R} \{\theta_1\wedge\theta_4\}, \qquad \left( \bigwedge\nolimits^{1,0}\mathfrak g^* \right)^* \cong \bigwedge\nolimits^{6,-3}\mathfrak g^*.
    \end{align*}
    As,
    \begin{align*}
        \mathscr D_{3,2}^{3,-1}(G) \cong \mathcal C^\infty_c(G) \otimes \operatorname{span}_{\mathbb R} \{\theta_2\wedge\theta_3\}\ &\text{ and }\
        \mathscr D_{2,3}^{4,-2}(G) \cong \mathcal C^\infty_c(G) \otimes \operatorname{span}_{\mathbb R} \{\theta_1\wedge\theta_4\}\\
        \mathscr D_{2,1}^{1,0}(G) \cong \mathcal C^\infty_c(G) \otimes \bigwedge\nolimits^{1,0}\mathfrak g^*\ &\text{ and }\
        \mathscr D_{1,2}^{6,-3}(G) \cong \mathcal C^\infty_c(G) \otimes \bigwedge\nolimits^{6,-3}\mathfrak g^*,
    \end{align*}
     taking continuous duals and using the preceding identifications gives \eqref{caso 2}, \eqref{caso 3}, \eqref{caso 5}, and \eqref{caso 6}.\\
    We now prove \eqref{caso 1}. By definition,
    \begin{equation*}
        \mathscr C_{3,1}^{1,0}(G) = \left( \mathscr D_{1,3}^{6,-3}(G) \right)'\,,
    \end{equation*}
    where 
    \begin{equation*}
        \mathscr D_{1,3}^{6,-3}(G)
    =
    \left (Z_{1}^{6,-3}(G) \cap \Omega_c^3(G) 
    \right)\big/ \left(
    B_{3}^{6,-3}(G) \cap \Omega_c^3(G) \right).
    \end{equation*}
    Moreover,
    \begin{equation*}
         Z_1^{6,-3}(G)\cap\Omega_c^3(G) = \Omega_c^{6,-3}(G). 
    \end{equation*}
    Hence the continuous dual of the quotient can be naturally identified with the annihilator of $B_3^{6,-3}(G)\cap\Omega_c^3(G)$  inside $\bigl(\Omega_c^{6,-3}(G)\bigr)' \cong \mathcal D'(G)\otimes \bigwedge\nolimits^{1,0}\mathfrak g^*$. More precisely,  
    \begin{equation*}
        T = T_1 \theta_1 + T_2 \theta_2 \in \mathscr{C}_{3,1}^{1,0}(G) \Longleftrightarrow T_1(X_2^2f) - T_2(X_3f+X_1X_2f) = 0 \ , \ \forall\,\, f \in \mathcal{C}^\infty_c(G)\,.
    \end{equation*}
    Since the left-invariant vector fields $X_i$ are formally skew-adjoint with respect to Haar measure,
    \begin{equation*}
        T = T_1 \theta_1 + T_2 \theta_2 \in \mathscr{C}_{3,1}^{1,0}(G) \Longleftrightarrow  (X_2X_1-X_3)T_2 = X_2^2T_1. 
    \end{equation*}
    It remains to prove \eqref{caso 7}.
    By definition,
    \begin{equation*}
        \mathscr C_{1,3}^{6,-3}(G) = \left( \mathscr D_{3,1}^{1,0}(G) \right)',
    \end{equation*}
    and, as in this bidegree the corresponding boundary space is trivial, $\mathscr{D}_{3,1}^{1,0}(G) = Z_3^{1,0}(G)\cap\Omega_c^1(G)$.
    Consequently, by the Hahn-Banach theorem
    \begin{equation*}
        \left( Z_3^{1,0}(G)\cap\Omega_c^1(G) \right)' \cong \left(\Omega_c^{1,0}(G)\right)' \Big/ \left( Z_3^{1,0}(G)\cap\Omega_c^1(G) \right)^\perp,
    \end{equation*}
    where $\perp$ denotes the annihilator with respect to the natural
duality pairing induced by the wedge product.
    Note that 
    \begin{equation*}
        \Omega_c^{1,0}(G) \cong \mathcal C^\infty_c(G) \otimes \bigwedge\nolimits^{1,0}\mathfrak g^* \cong \mathscr D_{2,1}^{1,0}(G).
    \end{equation*}
    Therefore,
    \begin{equation*}
        \left(\Omega_c^{1,0}(G)\right)' \cong \mathscr{C}_{1,2}^{6,-3}(G) \cong \mathcal{D}'(G) \otimes \bigwedge\nolimits^{6,-3}\mathfrak{g}^\ast.
    \end{equation*}
    
    Moreover, by Lemma \ref{lemma: Z3 a supporto compatto in deg 1},
    \begin{equation*}
        Z_3^{1,0}(G) \cap \Omega^1_c(G) = \left\{ X_1 f \theta_1 + X_2 f\theta_2 \,\, \big | \,\, f \in \mathcal{C}_c^\infty(G)\right\}.
    \end{equation*}
    Therefore
    \begin{equation*}
        T=T_1\,\theta_1\wedge\theta_3\wedge\theta_4 + T_2\,\theta_2\wedge\theta_3\wedge\theta_4 \in \left( Z_3^{1,0}(G)\cap\Omega_c^1(G) \right)^\perp \Longleftrightarrow T_2(X_1f)-T_1(X_2f)=0\ , \ \forall\, f\in\mathcal C^\infty_c(G)\,.
    \end{equation*}
    Using again  $X_i^t=-X_i$,  the latter condition is equivalent to $X_2T_1=X_1T_2$.
 This concludes the proof.
\end{proof}

The explicit description obtained above shows that, in the case of the Engel group, the complexes of spectral currents have precisely the form one would expect from a distributional extension of the spectral complexes: smooth coefficients are replaced by distributions, while the differential constraints and quotient relations defining the corresponding spectral spaces persist in the distributional sense.
As in the classical theory of currents, one therefore expects smooth spectral forms to define spectral currents by integration. The following theorem shows that this is indeed the case and, moreover, that the resulting maps give rise to an embedding of complexes.

\begin{theorem}[Natural embedding into spectral currents]\label{thm: embedding spectral currents}
    Let $G$ be the Engel group. For every admissible choice of the indices $p,k,r,l$, the map 
    \begin{equation} \label{eq: linear functional embedding} 
    \psi_{r,l}^{p,k-p}:E_{r,l}^{p,k-p}(G) \longrightarrow \mathscr C_{r,l}^{p,k-p}(G), \end{equation} 
    defined by 
\begin{equation}\label{eq: definition psi spectral currents}
\psi_{r,l}^{p,k-p}([\alpha])([\eta])
:=
\int_G\alpha\wedge\eta\ ,\ \text{ for } [\eta] \in \mathscr D_{l,r}^{Q-p,\, n-k-(Q-p)}(G)\,.
\end{equation}
       is a well-defined injective linear map. Moreover, the collection of maps $\{\psi_{r,l}^{p,k-p}\}$ is compatible with the spectral differentials and the boundary operators on spectral currents. More precisely, whenever the indices are admissible, 
    \begin{equation*} 
    \psi_{r,l}^{p,k-p} \circ \Delta_l = \partial_l \circ \psi_{l,m}^{p-l,k-1-p+l}. 
    \end{equation*}   
\end{theorem}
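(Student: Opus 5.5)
Since the set of admissible indices is finite, I would check the statement term by term along the two current complexes, using the explicit smooth descriptions of $Z_r$ and $B_r$ from Subsection \ref{subsection: spectral complexes in Engel}, the compact-support versions (Lemma \ref{lemma: Z3 a supporto compatto in deg 1}), and the distributional identifications in the preceding lemma.

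The argument rests on a single integration-by-parts principle, which I would isolate first. If $\alpha\in\Omega^{k}(G)$ and $\beta\in\Omega_c^{n-k-1}(G)$, then Stokes gives $\int_G d\alpha\wedge\beta=(-1)^{k+1}\int_G\alpha\wedge d\beta$. Splitting by weight, only components of total weight $Q=7$ survive in $\int_G$, and Lemma \ref{lem: red lemma} then yields the graded identity
\[
\int_G d_i\alpha\wedge\beta=(-1)^{k+1}\int_G\alpha\wedge d_i\beta
\]
whenever $w(\alpha)+w(\beta)+i=7$. The same identity holds for the operators $d_c^r$ and $\Delta_r$ on the Rumin representatives. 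Concretely, each differential listed in Subsection \ref{subsection: Rumin complex in Engel} is a differential operator with left-invariant coefficients, the $X_i$ are skew-adjoint for the Haar measure, and one checks directly that $\int d_c^r\alpha\wedge\eta=\pm\int\alpha\wedge d_c^r\eta$ on the complementary spaces.

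Well-definedness and linearity of $\psi_{r,l}^{p,k-p}$ come next. Three things need checking.
\begin{itemize}
\item[(a)] $\int_G\alpha\wedge\eta$ is independent of the representative of $[\eta]$. This means $\int\alpha\wedge\beta=0$ for $\alpha\in Z_r^{p,k-p}$ and $\beta\in B_r^{Q-p,\ast}\cap\Omega_c$.
\item[(b)] It is independent of the representative of $[\alpha]$. This means $\int\beta\wedge\eta=0$ for $\beta\in B_l$ and $\eta\in Z_l\cap\Omega_c$.
\item[(c)] The functional is continuous. This is clear: on each $Z(G;K)$ it is bounded by $\mathfrak p_{K,0}$ times $\int_K|\alpha|$, and the quotient topology then gives continuity.
\end{itemize}
For (a) and (b), a boundary of order $l$ is $d_0c+\dots$ with compatible witnesses. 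Pairing it against a cycle of order $l$ and applying the graded Stokes identity term by term makes everything telescope to zero, using the defining equations of Definition \ref{def: Z and B defined} on both sides. The weights match because $Q-p$ is complementary. In the Engel case most boundary spaces are trivial or equal to $\operatorname{Im}d_0$, which pairs to zero against $\ker d_0$ because $d_0$ is $\C^\infty$-linear and $\int d_0c\wedge\eta=\pm\int c\wedge d_0\eta$. The only genuinely differential checks are $B_3^{6,-3}$ against $Z_3^{1,0}$ and $B_3^{6,-3}\cap\Omega_c$ against $Z_3^{1,0}$. Both reduce to $d_c^2d_c^1=0$ after integrating by parts.

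The compatibility $\psi\circ\Delta_l=\partial_l\circ\psi$ follows from the graded Stokes identity together with the sign convention \eqref{eq: pairing duali}:
\[
\int\Delta_l[\alpha]\wedge\eta=(-1)^{k}\int\alpha\wedge\Delta_l[\eta],
\]
and the degrees match the $(-1)^{k+1}$ in \eqref{eq: pairing duali} once one accounts for the shift $k\mapsto k-1$. On Rumin representatives this is a direct check for each of the eight arrows, where $\Delta$ is either some $d_c^r$ or zero.

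I expect injectivity (non-degeneracy) to be the main obstacle. Suppose $\int\alpha\wedge\eta=0$ for all $[\eta]$; we must show $\alpha\in B_l$. In most bidegrees the test space is the full $\C_c^\infty\otimes W$ with $W$ dual to the Rumin fibre, so the fundamental lemma of calculus of variations gives $\Pi_0\alpha=0$, hence $\alpha\in B_1\subseteq B_l$. The delicate cases are the two non-full ones.
\begin{itemize}
\item[(i)] For $E_{1,3}^{6,-3}$, the test space is $Z_3^{1,0}\cap\Omega_c^1=d_1(\C_c^\infty)$ by Lemma \ref{lemma: Z3 a supporto compatto in deg 1}. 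Annihilation then means $X_2g=X_1h$ distributionally, hence classically for smooth $g,h$. Lemma \ref{lemma: B esplicito nel caso liscio in grado 3} gives $\alpha\in B_3^{6,-3}$, which is exactly what is needed.
\item[(ii)] For $Z_3^{1,0}$ paired with $\mathscr D_{1,3}^{6,-3}$, annihilating $\Omega_c^{6,-3}$ modulo $B_3\cap\Omega_c$ forces $\alpha=0$, again by density of $\C_c^\infty$.
\end{itemize}
Non-degeneracy in the second variable, as stated in Theorem \ref{thm: introduction spectral embedding}, works the same way with the roles swapped. The subtlety there is that the quotient may be non-Hausdorff, so non-degeneracy should be read modulo closures, as in Remark \ref{rmk: closedness of B and Hausdorff quotient}.
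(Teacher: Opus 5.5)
Your proposal is correct. For well-definedness, continuity and injectivity it follows the paper. Your term-by-term telescoping against the witnesses of Definition~\ref{def: Z and B defined} is the graded version of the paper's computation with $\alpha-\sum_j z_{p+j}$ and $d\bigl(\sum_i\zeta_{Q-p-i}\bigr)$. The $\mathfrak p_{K,0}$ estimate is the same, and the two delicate pairs are handled exactly as in the paper via Lemmas~\ref{lemma: B esplicito nel caso liscio in grado 3} and~\ref{lemma: Z3 a supporto compatto in deg 1}.

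You differ in the compatibility $\psi\circ\Delta_l=\partial_l\circ\psi$. You verify it arrow by arrow from the explicit formulas for $d_c^r$ and the skew-adjointness of the $X_i$. The paper instead subtracts the witnesses on both sides, so that each $\Delta_l$ becomes the full $d$ of a corrected form, discards the cross terms of weight $>Q$, and applies Stokes once. The paper's route needs no formulas and is valid on any Carnot group. Yours is more hands-on and must also account for the correction terms $-d_c^2\omega_4$ in $\Delta_3$, both on $E_{3,2}^{3,-1}$ and on $\mathscr D_{3,2}^{3,-1}$. These are harmless, since they lie in $B_3^{6,-3}$, resp.\ $B_3^{6,-3}\cap\Omega_c^3(G)$, and pair to zero with $\ker d_c^2$.

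There is a minor slip. For smooth $\alpha\in Z_3^{1,0}$ paired with $B_3^{6,-3}\cap\Omega_c^3(G)$, the vanishing comes from $d_c^2\alpha=0$ after integrating by parts, not from $d_c^2d_c^1=0$. Smooth $\ker d_c^2$ is strictly larger than $d_c^1(\C^\infty(G))$; for example, $g(r)\theta_2$ with $g''\neq0$ in the coordinates $(a,s,t,r)$. Your general telescoping argument already covers this case.

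Finally, your closing claim about non-degeneracy in the second variable is not needed for this theorem, and it is not a mere role swap. For $\mathscr D_{1,3}^{6,-3}$ one must recover the $X_2$-moment conditions that separate $B_3^{6,-3}\cap\Omega_c^3(G)$ from $Z_2^{6,-3}\cap\Omega_c^3(G)$. This can be done, for instance, by testing against $\phi(a,t,r)\theta_1$ and $s\,\phi(a,t,r)\theta_1$, both of which lie in $\ker d_c^2$.
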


The proof is based on the following two lemmas. We first consider the bilinear pairing underlying \eqref{eq: definition psi spectral currents}.

\begin{lemma}\label{lemma: pairing spectral currents}
    For every admissible choice of $p,k,r,l$, define
    \begin{equation*} 
    L_{r,l}^{p,k-p}: E_{r,l}^{p,k-p}(G) \times \mathscr D_{l,r}^{Q-p,\, n-k-(Q-p)}(G) \longrightarrow \mathbb R \end{equation*} 
    by 
    \begin{equation*} L_{r,l}^{p,k-p}([\alpha],[\eta]) := \int_G\alpha\wedge\eta. 
    \end{equation*} 
    Then the following properties hold:
\begin{enumerate}
    \item\label{pairing well defined} the pairing $L_{r,l}^{p,k-p}$ is well-defined; \item\label{pairing non degenerate} the pairing $L_{r,l}^{p,k-p}$ is non-degenerate;
    \item\label{pairing Stokes} for every $[\nu] \in E_{l,m}^{p-l,k-1-p+l}(G)$ and $[\eta] \in \mathscr{D}_{l,r}^{Q-p, n- k - Q + p }(G)$,
    \begin{equation*}
        L_{r,l}^{p,k-p}([\Delta_l \nu], [\eta]) = (-1)^{k} L_{l,m}^{p-l,k-1-p+l}( [\nu], [\Delta_l \eta])
    \end{equation*}
\end{enumerate}
\end{lemma}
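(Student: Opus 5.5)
Since the Engel group has only two spectral complexes, with finitely many admissible index choices, I will prove the lemma case by case. The pairs to treat are the terms of the two smooth complexes matched with their dual test spaces. Concretely these are $E_1^{0,0}$ against $\Omega_c^{7,-3}$, $E_{2,1}^{1,0}$ against $\mathscr D_{1,2}^{6,-3}$, $E_{3,1}^{1,0}$ against $\mathscr D_{1,3}^{6,-3}$, $E_{3,2}^{3,-1}$ against $\mathscr D_{2,3}^{4,-2}$, and the symmetric ones. Throughout I will use the explicit descriptions from Subsection \ref{subsection: spectral complexes in Engel}, Lemma \ref{lemma: Z3 a supporto compatto in deg 1}, and the description of the current spaces in the preceding lemma.

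For \eqref{pairing well defined}, I first check that the integral is finite: $\eta$ has compact support, and $\alpha\wedge\eta$ is a top form because the weights add to $p+(Q-p)=Q$. Only the weight-$Q$ part of $\Lambda^4$ survives, and this is all of $\Lambda^4$. Next I check independence of representatives, which is the substantive part.
\begin{itemize}
\item If $\alpha\in B_l^{p,k-p}$, then $\int\alpha\wedge\eta=0$ for all $\eta\in Z_l^{Q-p,\cdot}\cap\Omega_c$.
\item Symmetrically, if $\eta\in B_r\cap\Omega_c$, then $\int\alpha\wedge\eta=0$ for all $\alpha\in Z_r$.
\end{itemize}
In most bidegrees the $B$ spaces are either $0$ or $\operatorname{Im}d_0$. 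For $\operatorname{Im}d_0$, wedging with a $d_0$-closed form of complementary weight gives zero, since $d_0$ is a $\mathcal C^\infty$-linear antiderivation and $\Lambda^{\cdot}$ contains nothing of degree $5$. In the Engel case one can also verify this directly: $\theta_1\wedge\theta_3$ wedged with $\theta_1\wedge\theta_4$ vanishes. The nontrivial case is $B_3^{6,-3}$. Here I integrate by parts using the formal skew-adjointness $X_i^t=-X_i$. For $\alpha=f\theta_1+g\theta_2\in Z_3^{1,0}$ and $\eta=d_c^2(h\theta_1\wedge\theta_4)$, up to sign $\int\alpha\wedge\eta=\int h\,\bigl((X_2X_1-X_3)g-X_2^2f\bigr)$ after moving derivatives. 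This vanishes by the defining equation of $Z_3^{1,0}$. The reverse case, $\eta\in Z_3^{1,0}\cap\Omega_c=d_1(\mathcal C_c^\infty)$ paired with $\alpha$ in $B_3^{6,-3}$, reduces to $\int (X_1\varphi)T_2\ldots$ and uses $X_2g=X_1h$ via Lemma \ref{lemma: B esplicito nel caso liscio in grado 3}.

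For \eqref{pairing Stokes}, I will show that each $\Delta_l$ on Rumin representatives is $d_c^l$, possibly followed by a projection that does not affect the pairing because of well-definedness. By Proposition \ref{prop: differentials Delta_r}, $d_c^l$ is the weight-$(p+l)$ component of $\Pi_0$ applied to $\partial_l$. The Engel formulas show it agrees with the weight-$l$ component of $d$ acting on $\alpha$ plus correction terms that pair to zero. I then apply Stokes' theorem to $d(\nu\wedge\eta)=d\nu\wedge\eta+(-1)^{k-1}\nu\wedge d\eta$ on the compactly supported form and select the weight-$Q$ component. This yields $\int\Delta_l\nu\wedge\eta=(-1)^k\int\nu\wedge\Delta_l\eta$. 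If that route is messy, I fall back on the explicit formulas for $d_c$ from Subsection \ref{subsection: Rumin complex in Engel} and integrate by parts coefficientwise in each of the six maps.

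For \eqref{pairing non degenerate}, I argue in each variable separately.
\begin{itemize}
\item Suppose $[\alpha]$ pairs to zero with all $[\eta]$. When the test space is all of $\mathcal C_c^\infty\otimes W$, the fundamental lemma of calculus of variations gives $\alpha=0$ in the quotient, or $\alpha\in B$.
\item The hard cases are the constrained ones. When $\alpha\in Z_3^{1,0}$ is tested only against $\Omega_c^{6,-3}/B_3$, the test space is unconstrained modulo $B$, so $\alpha=0$ follows directly. When $\alpha\in\Omega^{6,-3}/B_3^{6,-3}$ is tested against $d_1\varphi$ with $\varphi\in\mathcal C_c^\infty$, vanishing means $X_2g=X_1h$ distributionally. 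Lemma \ref{lemma: B esplicito nel caso liscio in grado 3} then gives $\alpha\in B_3^{6,-3}$, so $[\alpha]=0$.
\item For nondegeneracy in the $\eta$ slot, I use that $E$ contains all of $\mathcal C^\infty\otimes W$, or $\mathcal C_c^\infty$ solutions, in each bidegree.
\end{itemize}
I expect this last pair, where the smooth-versus-compact asymmetry of Lemmas \ref{lemma: B esplicito nel caso liscio in grado 3} and \ref{lemma: Z3 a supporto compatto in deg 1} must match exactly, to be the main obstacle, together with the remaining check for $\eta\in\Omega_c^{6,-3}$ against $Z_3^{1,0}$. In that check non-degeneracy holds only modulo the closure of $B_3\cap\Omega_c$, so I will interpret the result as non-degeneracy on the quotient.
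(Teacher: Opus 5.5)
Your argument for \eqref{pairing non degenerate} matches the paper's. $Z_3^{1,0}$ is detected by the unconstrained test space $\Omega_c^{6,-3}(G)$. For $E_{1,3}^{6,-3}$ you test against $d_1\varphi$ and invoke Lemmas~\ref{lemma: Z3 a supporto compatto in deg 1} and~\ref{lemma: B esplicito nel caso liscio in grado 3}. The paper only proves non-degeneracy in the $E$-slot, which is all that injectivity of $\psi$ requires. If you want the other slot for $E_{3,1}^{1,0}\times\mathscr D_{1,3}^{6,-3}$, your closure caveat is unnecessary. Also test against $(A+sB)\,\theta_1\in Z_3^{1,0}$, with $A,B$ smooth in $(a,t,r)$. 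This gives $\int h_2\,ds=\int s\,h_2\,ds=0$ along every $X_2$-line. Together with $X_2h_1=X_1h_2$, these conditions force $\eta\in B_3^{6,-3}\cap\Omega_c^3(G)$: integrate $h_2$ twice in $s$ as in Lemma~\ref{lemma: B esplicito nel caso liscio in grado 3}, and the primitive is now compactly supported.

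For \eqref{pairing well defined} and \eqref{pairing Stokes} you go case by case. The paper argues structurally instead. It subtracts the defining witnesses from both forms, $\hat\nu=\nu-\sum_j z$ and $\hat\eta=\eta-\sum_j\zeta$, and notes that every cross term has weight $>Q$ and so vanishes. It then applies Stokes to $\hat\nu\wedge\hat\eta$, which is compactly supported precisely because of the compact-support convention for witnesses.

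Your check of \eqref{pairing well defined} is fine, and both integrations by parts for $B_3^{6,-3}$ are correct. Your stated reason ``nothing of degree $5$'' is wrong, since $c\wedge\eta$ is a $3$-form; it vanishes because it would have weight $7$. Your direct check $\theta_1\wedge\theta_3\wedge\theta_1\wedge\theta_4=0$ already suffices.

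Your primary route for \eqref{pairing Stokes}, however, rests on a false claim: the correction terms relating $\Delta_l$ to $d_l$ do not pair to zero. Take $\nu=f_1\theta_1+f_2\theta_2$ and $\eta=g\,\theta_1\wedge\theta_4$. Then $\int_G d_2\nu\wedge\eta=-\int_G gX_3f_2\operatorname{vol}$, whereas $\int_G \Delta_2\nu\wedge\eta=\int_G g\,[X_2(X_1f_2-X_2f_1)-X_3f_2]\operatorname{vol}$. The discrepancy comes from $-d_1z_2$ and is non-zero in general. Stokes applied to $\nu\wedge\eta$ itself only gives the adjointness of $d_l$. The corrections on the two sides cancel only against each other, which is exactly what the witness substitution makes visible.

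Your fallback does work. On Rumin representatives each $\Delta_l$ equals $d_c^l$ modulo a $B$-space, which pairs to zero by \eqref{pairing well defined}. You then need only three integration-by-parts identities with $X_i^t=-X_i$, one for each mutually adjoint pair: $d_c^1$ (weights $0\to1$ and $6\to7$), $d_c^2$ ($1\to3$ and $4\to6$), and $d_c^3$ ($1\to4$ and $3\to6$). The signs come out as $(-1)^k$. So commit to the fallback.

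The paper's route buys uniformity over arbitrary Carnot groups. It is also the same mechanism, with Theorem~\ref{th: KMX 1} replacing Stokes, that proves Theorem~\ref{thm: pullback has a map of complexes}. Your route is elementary but Engel-specific, and it requires identifying every $\Delta_l$ explicitly on both the smooth and the compactly supported side.
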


\begin{proof}
    We first prove \eqref{pairing well defined}. Let $\alpha \in Z_r^{p,k-p}(G)$ and $\beta \in B_r^{Q-p,n-k-Q+p}(G)$, and assume that at least one of them has compact support. 
  By Definition \ref{def: Z and B defined}, there exist forms $z_{p+j}\in\Omega^{p+j,k-p-j}(G)$ for $j = 1,\ldots, r-1$ such that 
    \begin{align*}
        d\big( \alpha - \sum_{j=1}^{r-1}z_{p+j} \big) \in \Omega^{\geq p+r, k+1}(G).
    \end{align*}    
    The same definition also yields forms $\zeta_{Q-p-i} \in \Omega^{Q-p -i, n-(k+1) - Q + p +i}(G)$ for $i=0,\ldots,r-1$ such that
    \begin{align*}
        d\big( \sum_{i=0}^{r-1}\zeta_{Q-p-i}\big) = \beta  + \Omega^{Q-p+1, n - k}(G)\,.
    \end{align*}
Note that, for every $j=1,\ldots,r-1$, we have $w(z_{p+j}\wedge\beta)\geq Q+1,$ and hence $z_{p+j}\wedge\beta=0$. Moreover, the terms in
$d\left(\sum_{i=0}^{r-1}\zeta_{Q-p-i}\right)-\beta$ have weight at least $(Q-p+1)$, and therefore their wedge product with
$\alpha-\sum_{j=1}^{r-1}z_{p+j}$, whose terms have weight at least $p$, vanishes. Consequently,
\begin{align*}
        \int_G \alpha \wedge \beta &= \int_G \big( \alpha - \sum_{j=1}^{r-1}z_{p+j} \big) \wedge \beta= \int_G \big( \alpha - \sum_{j=1}^{r-1}z_{p+j} \big) \wedge d\big( \sum_{i=0}^{r-1}\zeta_{Q-p-i}\big)\\
        &= (-1)^{k+1} \int_G d\big( \alpha - \sum_{j=1}^{r-1}z_{p+j} \big) \wedge \big( \sum_{i=0}^{r-1}\zeta_{Q-p-i}\big) = 0.
    \end{align*}
    Therefore, for every $\beta_{\alpha} \in B_l^{p,k-p}$ and $\beta_{\eta} \in B_r^{Q-p,n-k-Q+p}(G)$ 
    \begin{equation*}
        \int_G \big(\alpha + \beta_{\alpha} \big) \wedge \big(\eta + \beta_{\eta} \big) = \int_G \alpha\wedge \eta,
    \end{equation*}
    i.e. the integral does not depend on either representative.
    This proves that the map $L_{r,l}^{p,k-p}$ is well defined.

    We now prove \eqref{pairing non degenerate}. For all the spectral spaces whose explicit descriptions contain no differential constraint, the claim follows immediately from the non-degeneracy of the wedge product between complementary components.
    For the dual pair 
    \begin{equation*}
    E_{3,1}^{1,0}(G) \times \mathscr D_{1,3}^{6,-3}(G),
    \end{equation*} 
    let $\alpha = f_1\theta_1+f_2\theta_2 \in Z_3^{1,0}(G)$ be non-zero. Assume, without loss of generality, that $f_1\not\equiv0$. Choose $x_0\in G$ such that $f_1(x_0)\neq0$, and let $\chi\in\mathcal C_c^\infty(G)$ be a non-negative cut-off function supported in a sufficiently small neighbourhood of $x_0$ on which $f_1$ does not vanish identically. Set 
    \begin{equation*} 
    \eta = \chi f_1\, \theta_2\wedge\theta_3\wedge\theta_4. \end{equation*}
    Then 
    \begin{equation*} 
    \int_G\alpha\wedge\eta = \int_G \chi f_1^2\,\operatorname{vol} >0. \end{equation*} 
    Thus no non-zero element of $E_{3,1}^{1,0}(G)$ annihilates the whole space $\mathscr D_{1,3}^{6,-3}(G)$.
    Conversely, 
    for 
    \begin{equation*}
        E_{1,3}^{6,-3}(G) \times \mathcal{D}_{3,1}^{1,0}
    \end{equation*}
    let $\eta = h_1\, \theta_1\wedge\theta_3\wedge\theta_4 + h_2\, \theta_2\wedge\theta_3\wedge\theta_4$ represent a non-zero class in $E_{1,3}^{6,-3}(G)$. Thus, $\eta \notin B_3^{6,-3}(G)$. By Lemma \ref{lemma: B esplicito nel caso liscio in grado 3}, this is equivalent to 
    \begin{equation*} 
    X_2h_1-X_1h_2\not\equiv0. 
    \end{equation*}
    On the other hand, by Lemma \ref{lemma: Z3 a supporto compatto in deg 1}, every compactly supported form in $Z_{3}^{1,0}(G) \cap \Omega^1_c(G)$ can be written as
    \begin{equation*} X_1f\,\theta_1+X_2f\,\theta_2, \qquad f\in\mathcal C_c^\infty(G), \end{equation*}
    Therefore, 
    \begin{align*} 
    \int_G \left( X_1f\,\theta_1+X_2f\,\theta_2 \right) \wedge\eta &= \int_G \left( h_2X_1f-h_1X_2f \right) \operatorname{vol} = \int_G f\left( X_2h_1-X_1h_2 \right) \operatorname{vol}. 
    \end{align*} 
    Since $X_2h_1-X_1h_2\not\equiv0$, arguing as above, one can choose $f\in\mathcal C_c^\infty(G)$ so that the last integral is non-zero. This proves non-degeneracy.
    
    Finally, we prove \eqref{pairing Stokes}.
   Choose representatives $\nu \in [\nu]$ and $\eta \in [\eta]$. By the description of the spectral differentials in Proposition \ref{prop: differentials Delta_r}, there exist  $ z_{p-l+j}\in \Omega^{p-l+j, k-1 -p+l-j}(G)$ for $1\le j\le l-1$ such that
    \begin{align*}
        L_{r,l}^{p,k-p}([\Delta_l \nu], [\eta]) &= \int_G \Delta_l\nu \wedge \eta = \int_G  \big(d_l\nu-\sum_{j=1}^{l-1}d_jz_{p-j}\big) \wedge \eta = \int_G d \big(\nu-\sum_{j=1}^{l-1}z_{p-j}\big) \wedge \eta. 
    \end{align*}
    The last equality follows from the defining relations satisfied by the family
    $\lbrace z_{p-l+j}\rbrace_{1\le j\le l-1}$; see Definition \ref{def: Z and B defined}.
    Similarly, since $\eta\in \mathscr{D}_{l,r}^{Q-p,n-k-Q+p}(G)$, there exist forms $\zeta_{Q-p+j}\in \Omega^{Q-p+j, n-k -Q+p-j}(G)$ for $1\le j\le l-1$ such that
    \begin{align*}
        L_{l,m}^{p-l,k-1-p+l}([\nu], [\Delta_l \eta]) &= \int_G \nu \wedge \Delta_l \eta= \int_G \nu \wedge \big( d_l \eta - \sum_{j=1}^{l-1}d_j\zeta_{Q-p+l-j} \big) = \int_G \nu \wedge d\big(\eta - \sum_{j=1}^{l-1}\zeta_{Q-p+l-j}\big) .
    \end{align*}
    Moreover, the defining relations for the families $\lbrace z_{p-l+j}\rbrace_{1\le j\le l-1}$ and $\lbrace \zeta_{Q-p+j}\rbrace_{1\le j\le l-1}$ imply that, for every $1\le j\le l-1$,
    \begin{align*}
        w\Big(z_{p-j} \wedge d\big(\eta - \sum_{j=1}^{l-1}\zeta_{Q-p+l-j}\big)\Big) &\geq Q +1\,, \\
        w\Big(d \big(\nu-\sum_{j=1}^{l-1}z_{p-j}\big) \wedge \zeta_{Q-p+j} \Big) &\geq Q+1\,.
    \end{align*}
    Hence all these wedge products vanish, and
    \begin{align*}
       L_{r,l}^{p,k-p}([\Delta_l \nu], [\eta]) &= \int_G d \big(\nu-\sum_{j=1}^{l-1}z_{p-j}\big) \wedge \big(\eta - \sum_{j=1}^{l-1}\zeta_{Q-p+l-j}\big)\,,\\
       L_{l,m}^{p-l,k-1-p+l}([\nu], [\Delta_l \eta]) &=\int_G \big(\nu-\sum_{j=1}^{l-1}z_{p-j}\big) \wedge d\big(\eta - \sum_{j=1}^{l-1}\zeta_{Q-p+l-j}\big)\,.
    \end{align*}
   The identity \eqref{pairing Stokes} now follows directly from Stokes' theorem.     
\end{proof}

\begin{remark}
    Points \eqref{pairing well defined} and \eqref{pairing Stokes} hold in the more general setting of an arbitrary Carnot group $G$. In contrast, the proof of point \eqref{pairing non degenerate} relies on the explicit structure of the spaces $Z_r^{p,k-p}(G)$ and $B_r^{p,k-p}(G)$ in the Engel group.
\end{remark}

We next prove the continuity of the functional induced by $\alpha \in E_{r,l}^{p,k-p}(G)$

\begin{lemma}\label{lemma: continuity spectral embedding}
    Let $[\alpha]\in E_{r,l}^{p,k-p}(G)$. Then the linear functional on $\mathscr D_{l,r}^{Q-p,\, n-k-(Q-p)}(G)$
    \begin{equation*}
    \psi_{r,l}^{p,k-p}([\alpha]) = L_{r,l}^{p,k-p}([\alpha], \cdot) \colon[\eta] \longmapsto \int_G\alpha\wedge\eta \end{equation*}
    is continuous. 
\end{lemma}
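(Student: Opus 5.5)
The plan is to reduce continuity on the quotient to continuity on the numerator, and then to continuity on each fixed-support Fréchet piece, where it becomes an elementary estimate. Since $\mathscr D_{l,r}^{Q-p,\,n-k-(Q-p)}(G)$ carries the quotient topology, a linear functional on it is continuous if and only if its composition with the quotient map $\pi\colon Z_l^{Q-p,\,n-k-(Q-p)}(G)\cap\Omega_c^{n-k}(G)\to\mathscr D_{l,r}^{Q-p,\,n-k-(Q-p)}(G)$ is continuous. By Lemma \ref{lemma: pairing spectral currents}\eqref{pairing well defined}, the composite $\Lambda_\alpha:=\psi_{r,l}^{p,k-p}([\alpha])\circ\pi$ is simply $\eta\mapsto\int_G\alpha\wedge\eta$ for a fixed smooth representative $\alpha\in Z_r^{p,k-p}(G)$. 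In particular, it does not depend on the choice of $\alpha$ in its class, and it vanishes on the compactly supported boundary space. It therefore suffices to show that $\Lambda_\alpha$ is continuous on the numerator.

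The numerator carries the subspace topology induced from the LF space $\Omega_c^{n-k}(G)$. The cleanest route is to observe that $\Lambda_\alpha$ is the restriction of the linear functional $\eta\mapsto\int_G\alpha\wedge\eta$, which is defined on all of $\Omega_c^{n-k}(G)$. A linear functional on a strict inductive limit of Fréchet spaces is continuous as soon as its restriction to every step $\Omega_K^{n-k}(G)$ is continuous. The restriction of a continuous functional to a subspace is continuous for the subspace topology, so $\Lambda_\alpha$ is continuous on $Z_l^{Q-p,\,n-k-(Q-p)}(G)\cap\Omega_c^{n-k}(G)$. This step deliberately avoids characterising the subspace topology via the pieces $Z_l(G;K)$ of Lemma \ref{lem: Z fixed support Frechet}. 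That characterisation would require the subspace to inherit an inductive-limit description, which is the only subtle point. Extending to the ambient LF space sidesteps it entirely.

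For the estimate on a fixed compact set $K$, write $\alpha=\sum_I\alpha_I\theta_I$ and $\eta=\sum_J\eta_J\theta_J$ in the left-invariant frame. Then $\alpha\wedge\eta=\bigl(\sum_{I,J}\pm\alpha_I\eta_J\bigr)\operatorname{vol}$, where the sum runs over complementary multi-indices. Since $\operatorname{vol}$ is a Haar measure, this gives
\[
\Bigl|\int_G\alpha\wedge\eta\Bigr|\le C\,|K|\,\Bigl(\max_I\sup_K|\alpha_I|\Bigr)\,\mathfrak p_{K,0}(\eta)
\]
for every $\eta\in\Omega_K^{n-k}(G)$. Here $C$ depends only on the finite-dimensional exterior algebra and the chosen norms, and $\sup_K|\alpha_I|<\infty$ because $\alpha$ is smooth. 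Thus the functional is bounded by a single seminorm $\mathfrak p_{K,0}$ on each $\Omega_K^{n-k}(G)$, hence continuous there, and the conclusion follows.

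The main obstacle is purely topological: making sure that the quotient and subspace topologies do not obstruct the reduction. In particular, the boundary space need not be closed (Remark \ref{rmk: closedness of B and Hausdorff quotient}). This causes no difficulty, because well-definedness has already been established and the continuity of a functional on a quotient is tested on the numerator regardless of whether the quotient is Hausdorff.
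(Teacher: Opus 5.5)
Your proof is correct and follows the paper's argument: a bound $\bigl|\int_G\alpha\wedge\eta\bigr|\le C_{\alpha,K}\,\mathfrak p_{K,0}(\eta)$ on fixed supports, then the inductive limit, then descent to the quotient using the well-definedness from Lemma~\ref{lemma: pairing spectral currents}. The one difference is where the inductive limit is used. You test continuity on the ambient space $\Omega_c^{n-k}(G)$ and restrict, whereas the paper applies the universal property directly on $Z_l^{Q-p,\,n-k-(Q-p)}(G)\cap\Omega_c^{n-k}(G)$ via the pieces $Z_l^{Q-p,\,n-k-(Q-p)}(G;K)$. The paper gives the numerator the subspace topology, which is a priori coarser than the inductive-limit topology of those pieces, so your route is the more careful justification; it costs nothing, because the estimate holds for every $\eta\in\Omega_K^{n-k}(G)$.
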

\begin{proof}
    Choose a smooth representative $\alpha\in Z_r^{p,k-p}(G)$. Let $ F_\alpha $ be the functional on $ Z_l^{Q-p,\, n-k-(Q-p)}(G) \cap \Omega_c^{n-k}(G)$ defined as
    \begin{equation*} 
    F_\alpha(\eta) := \int_G\alpha\wedge\eta \end{equation*}   
    Let $K\subset G$ be compact and $\eta \in Z_l^{Q-p,\, n-k-(Q-p)}(G;K)$.
    Since $\alpha$ is smooth, there exists a constant $C_{\alpha,K}>0$ such that \begin{equation*} 
     \left| F_\alpha(\eta) \right| =\left| \int_K\alpha\wedge\eta \right| \leq \int_K |(\alpha \wedge \eta)(x) |dVol(x) \leq C_{\alpha,K} \mathfrak p_{K,0}(\eta). 
     \end{equation*}
     Hence the map $F_\alpha$ is continuous on every Fr\'echet space $Z_l^{Q-p,\, n-k-(Q-p)}(G;K)$. 
     By the universal property of the inductive-limit topology, $F_\alpha$ is therefore continuous on $Z_l^{Q-p,\, n-k-(Q-p)}(G) \cap \Omega_c^{n-k}(G)$ endowed with its LF topology. Since $F_\alpha$ vanishes on $B_r^{Q-p,\, n-k-(Q-p)}(G) \cap \Omega_c^{n-k}(G), $ the universal property of the quotient topology implies that it descends to a continuous linear functional on $\mathscr D_{l,r}^{Q-p,\, n-k-(Q-p)}(G)$.
\end{proof}

\begin{proof}[Proof of Theorem \ref{thm: embedding spectral currents}] 
By point \eqref{pairing well defined} of Lemma \ref{lemma: pairing spectral currents}, the expression in \eqref{eq: definition psi spectral currents} is well defined, and by Lemma \ref{lemma: continuity spectral embedding} it defines an element of $\mathscr C_{r,l}^{p,k-p}(G)$. Moreover, point \eqref{pairing non degenerate} of Lemma \ref{lemma: pairing spectral currents} implies that the map $\psi_{r,l}^{p,k-p}$ is injective, while point \eqref{pairing Stokes} gives the compatibility with the differentials. Hence, the family $\{\psi_{r,l}^{p,k-p}\}$ defines an embedding of complexes.
\end{proof}

\begin{remark}
    Theorem \ref{thm: embedding spectral currents} applies separately to each complex in the family of spectral complexes associated with the Engel group, and therefore yields two embeddings. In other words, the theorem states that
\[\begin{tikzcd}
	{Z_1^{0,0}(G)} & {E_{2,1}^{1,0}(G)} & {E_{3,2}^{3,-1}(G)} & {E_{1,3}^{6,-3}(G)} & {\Omega^{7,-3}(G)} \\
	{\mathcal{D}'(G)} & {\mathscr{C}_{2,1}^{1,0}(G)} & {\mathscr{C}_{3,2}^{3,-1}(G)} & {\mathscr{C}_{1,3}^{6,-3}(G)} & {\mathcal{D}'(G) \otimes \bigwedge^4\mathfrak{g}^\ast}
	\arrow["{\Delta_1}", from=1-1, to=1-2]
	\arrow[hook, from=1-1, to=2-1]
	\arrow["{\Delta_2}", from=1-2, to=1-3]
	\arrow[hook, from=1-2, to=2-2]
	\arrow["{\Delta_3}", from=1-3, to=1-4]
	\arrow[hook, from=1-3, to=2-3]
	\arrow["{\Delta_1}", from=1-4, to=1-5]
	\arrow[hook, from=1-4, to=2-4]
	\arrow[hook, from=1-5, to=2-5]
	\arrow["{\partial_1}"', from=2-1, to=2-2]
	\arrow["{\partial_2}"', from=2-2, to=2-3]
	\arrow["{\partial_3}"', from=2-3, to=2-4]
	\arrow["{\partial_1}"', from=2-4, to=2-5]
\end{tikzcd}\]

and

\[\begin{tikzcd}
	{Z_1^{0,0}(G)} & {E_{3,1}^{1,0}(G)} & {E_{2,3}^{4,-2}(G)} & {E_{1,2}^{6,-3}(G)} & {\Omega^{7,-3}(G)} \\
	{\mathcal{D}'(G)} & {\mathscr{C}_{3,1}^{1,0}(G)} & {\mathscr{C}_{2,3}^{4,-2}(G)} & {\mathscr{C}_{1,2}^{6,-3}(G)} & {\mathcal{D}'(G) \otimes \bigwedge^4\mathfrak{g}^\ast}
	\arrow["{\Delta_1}", from=1-1, to=1-2]
	\arrow[hook, from=1-1, to=2-1]
	\arrow["{\Delta_3}", from=1-2, to=1-3]
	\arrow[hook, from=1-2, to=2-2]
	\arrow["{\Delta_2}", from=1-3, to=1-4]
	\arrow[hook, from=1-3, to=2-3]
	\arrow["{\Delta_1}", from=1-4, to=1-5]
	\arrow[hook, from=1-4, to=2-4]
	\arrow[hook, from=1-5, to=2-5]
	\arrow["{\partial_1}"', from=2-1, to=2-2]
	\arrow["{\partial_3}"', from=2-2, to=2-3]
	\arrow["{\partial_2}"', from=2-3, to=2-4]
	\arrow["{\partial_1}"', from=2-4, to=2-5]
\end{tikzcd}\]

\end{remark}

\section{Pansu pullback as a morphism into spectral currents}

Let $G$ be a Carnot group, and let $n$ and $Q$ denote its topological and homogeneous dimensions, respectively. Let $\varphi:G \rightarrow G$  belong to $W^{1,q}_{\mathrm{loc}}(G, G)$ with $q> Q$. By Theorem \ref{thm: Vodopyanov Pansu diff}, the map $\varphi$ is Pansu differentiable almost every $x \in G$.

\begin{definition}
    Let $\alpha=f\Theta\in\Omega^k(G)$ where $f\in\mathcal C^\infty(G)$ and $\Theta\in\bigwedge^k\mathfrak g^*$ is left-invariant. We define the Pansu pullback of $\alpha$ by $\varphi$ at almost every $x\in G$ by \begin{equation}\label{eq: Pansu pullback Sobolev}
    \varphi_P^*\alpha(x) := (f\circ\varphi)(x)\, D\varphi(x)^*\Theta, 
    \end{equation}
    where $D\varphi(x)= \exp^{-1}_{ G} \circ D_P\varphi(x)\circ \exp_G\colon \mathfrak g \longrightarrow \mathfrak g$ is the Lie algebra homomorphism induced by the Pansu derivative in $x$.
    Equivalently, for every $Y_1,\ldots,Y_k\in\mathfrak g$, \begin{equation*}
  \bigl(\varphi_P^*\alpha(x)\bigr) (Y_1,\ldots,Y_k) = (f\circ\varphi)(x) \Theta \bigl( D\varphi(x)Y_1,\ldots,D\varphi(x)Y_k \bigr). 
    \end{equation*}
    The definition extends to arbitrary smooth differential forms by linearity.
\end{definition}

\begin{lemma}\label{lemma: coeff del pullback L1}
Let $\Theta\in\bigwedge^{p,k-p}\mathfrak g^*$. Then, for $k \ge 1$
\begin{equation*}
\varphi_P^*(f\Theta) \in L^{q/p}_{\mathrm{loc}} \left( G,\bigwedge\nolimits^k\mathfrak g^* \right). \end{equation*}
In particular, for every $\alpha\in\Omega^k(G)$, 
\begin{equation*} 
\varphi_P^*\alpha \in L^1_{\mathrm{loc}} \left( G,\bigwedge\nolimits^k\mathfrak g^* \right). 
\end{equation*} 
For $k=0$, one has 
\begin{equation*} 
\varphi_P^*f=f\circ\varphi \in L^\infty_{\mathrm{loc}}(G).
\end{equation*}
\end{lemma}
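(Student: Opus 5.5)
The proof reduces to a pointwise estimate on the coefficients of the Pansu pullback, followed by Hölder's inequality. Fix $\Theta\in\bigwedge^{p,k-p}\mathfrak g^*$ with $k\ge1$ and $f\in\mathcal C^\infty(G)$.

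\emph{Step 1 (structure of $D\varphi(x)$).} By Theorem \ref{thm: Vodopyanov Pansu diff}, at almost every $x$ the map $D\varphi(x)$ is a graded Lie algebra homomorphism that commutes with the dilations. It therefore maps $V_i$ into $V_i$. Its restriction to $V_i$ is a polynomial in the entries of $D_h\varphi(x)$ that is homogeneous of degree $i$: on the Engel group this is visible from the explicit matrix, where the entries on $X_3$ and $X_4$ are $\det D_h\varphi$ and $X_1\varphi_1\det D_h\varphi$. The same holds on a general Carnot group, since $V_i$ is spanned by brackets of length $i$ of horizontal vectors.

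\emph{Step 2 (pointwise bound).} Since $\Theta$ is homogeneous of weight $p$, it is a combination of wedge products $\theta_{j_1}\wedge\cdots\wedge\theta_{j_k}$ with $\sum w(\theta_{j_m})=p$. Applying Step 1 to each factor, every coefficient of $D\varphi(x)^*\Theta$ is a polynomial in the entries of $D_h\varphi(x)$ that is homogeneous of degree $p$. Hence
\[
|\varphi_P^*(f\Theta)(x)|\le C\,|(f\circ\varphi)(x)|\,|D_h\varphi(x)|^p\quad\text{for a.e. }x.
\]

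\emph{Step 3 (integrability).} Two facts are needed.
\begin{itemize}
\item Since $q>Q$, the Sobolev embedding (Morrey-type, as in \cite{Vodopyanov2000}) shows that $\varphi$ has a continuous representative. On a compact set $K$ the image $\varphi(K)$ is then compact, so $f\circ\varphi\in L^\infty(K)$. This also settles the case $k=0$.
\item The entries of $D_h\varphi$ are the horizontal derivatives $X_i\varphi_j$ with $j=1,2$. These are dominated by the upper gradient $g\in L^q_{\mathrm{loc}}$ from the definition of $W^{1,q}_{\mathrm{loc}}(U,\widetilde G)$; equivalently, the horizontal components $\varphi_1,\varphi_2$ lie in $W^{1,q}_{\mathrm{loc}}$.
\end{itemize}
It follows that $|D_h\varphi|^p\in L^{q/p}_{\mathrm{loc}}$, and therefore $\varphi_P^*(f\Theta)\in L^{q/p}_{\mathrm{loc}}$.

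\emph{Step 4 ($L^1_{\mathrm{loc}}$ for arbitrary forms).} A general $\alpha\in\Omega^k(G)$ with $k\ge1$ is a finite sum of terms $f\Theta$ with weights $p\le Q$; on the Engel group $p\le 7$. Since $q>Q\ge p$, we have $q/p>1$. On compact sets $L^{q/p}\subset L^1$, so each term, and hence $\varphi_P^*\alpha$, lies in $L^1_{\mathrm{loc}}$.

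The main obstacle is Step 1, namely justifying that the Pansu differential is controlled homogeneously by $D_h\varphi$. For the Engel group this follows directly from the explicit matrix computed earlier. The other point needing care is the measurability of $x\mapsto D\varphi(x)$, which holds because its entries are polynomials in the measurable functions $X_i\varphi_j$.
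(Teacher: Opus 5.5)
Your proposal is correct and follows essentially the same route as the paper. Like the paper, you bound $f\circ\varphi$ on compact sets via the Morrey-type continuity for $q>Q$, and you use the graded homomorphism property of $D\varphi(x)$ to get the pointwise estimate $|\varphi_P^*(f\Theta)(x)|\le C\,|f(\varphi(x))|\,|D_h\varphi(x)|^p$; you then conclude from $|D_h\varphi|\in L^q_{\mathrm{loc}}$ and $1\le p\le Q<q$ that the pullback lies in $L^{q/p}_{\mathrm{loc}}\subset L^1_{\mathrm{loc}}$. You spell out somewhat more detail (degree-$p$ homogeneity of the coefficients, measurability), but the argument is the same.
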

\begin{proof} 
Let $K\subset G$ be compact. Since $q>Q$, the Sobolev-Morrey embedding (see \cite[Lemma 2.7]{kleiner2020pansu}) gives a continuous representative of $\varphi$. In particular, $\varphi(K)$ is compact, and hence 
\begin{equation*} 
\sup_{x\in K}|f(\varphi(x))|<\infty. \end{equation*} 
Let $D_h\varphi(x):V_1\longrightarrow V_1$ denote the formal horizontal differential.
Since $\varphi\in W^{1,q}_{\mathrm{loc}}(G,G)$, 
\begin{equation*} 
|D_h\varphi| \in L^q_{\mathrm{loc}}(G).
\end{equation*}
Moreover, as $D\varphi(x)$ is a graded Lie algebra homomorphism, its restriction to every higher layer is determined by $D_h\varphi(x)$, for every $j$ there exists a constant $C_j>0$, depending only on the group, such that 
\begin{equation*} 
\left| D\varphi(x)|_{V_j} \right| \leq C_j |D_h\varphi(x)|^j 
\end{equation*} 
for almost every $x\in G$. Therefore, there exists a constant $C_\Theta>0$ such that 
\begin{equation*} 
\left| D\varphi(x)^*\Theta \right| \leq C_\Theta |D_h\varphi(x)|^p 
\end{equation*}
for almost every $x\in G$. It follows that \begin{equation*} 
|\varphi_P^*(f\Theta)(x)| \leq C_\Theta |f(\varphi(x))| |D_h\varphi(x)|^p.
\end{equation*} 
Since $f\circ\varphi$ is bounded on $K$ and $D_h\varphi\in L^q(K)$, we obtain \begin{equation*} 
\varphi_P^*(f\Theta) \in L^{q/p} \left( K,\bigwedge\nolimits^k\mathfrak g^* \right).
\end{equation*}
Finally, since $1\le p\le Q<q$, we have $\frac{q}{p}>1$ for every possible weight $p$ and, therefore, $L^{q/p}(K)\subset L^1(K)$.
\end{proof}

By Lemma \ref{lemma: coeff del pullback L1}, the Pansu pullback  of every smooth $k$-form naturally defines a classical current of degree $k$. More precisely, we have a linear map
\begin{align*}
    \Omega^k(G) &\longrightarrow \big(\Omega^{n -k}_c(G)\big)'\\
    \alpha &\mapsto \Big( \eta \mapsto \int_G \varphi_p^* \alpha \wedge \eta\Big),
\end{align*}
where $\bigl(\Omega_c^\bullet(G)\bigr)'$ denotes the classical space of currents on $G$. Equivalently, $\varphi_P^*\alpha$ can be regarded as a differential form with distributional coefficients, namely
\begin{equation*}
    \varphi_P^* \alpha \in \mathcal{D}'(G) \otimes \bigwedge\nolimits^k \mathfrak{g}^\ast\,.
\end{equation*}

Moreover, the graded structure of $D\varphi(x)$ implies that the Pansu pullback preserves the weight decomposition. More precisely, for every $p$,
\begin{equation*}
D\varphi(x)^* \left( \bigwedge\nolimits^{p,k-p}\mathfrak g^* \right) \subset \bigwedge\nolimits^{p,k-p}\mathfrak g^*
\end{equation*}
for almost every $x\in G$. Consequently, 
\begin{equation}\label{eq: Pansu pullback preserves weight} 
\alpha\in\Omega^{p,k-p}(G) \quad\Longrightarrow\quad \varphi_P^*\alpha \in L^1_{\mathrm{loc}}(G) \otimes \bigwedge\nolimits^{p,k-p}\mathfrak g^*. 
\end{equation} 
In particular, through the integration pairing,
\begin{equation*} 
\varphi_P^*\alpha \in \left( \Omega_c^{Q-p,\, n-k-(Q-p)}(G) \right)'.
\end{equation*}

We next consider the algebraic differential $d_0$. Since $d_0$ acts only on the left-invariant component of a differential form, it extends naturally to forms with distributional coefficients. Thus \begin{equation*} d_0: \mathcal D'(G)\otimes \bigwedge\nolimits^k\mathfrak g^* \longrightarrow \mathcal D'(G)\otimes \bigwedge\nolimits^{k+1}\mathfrak g^* \end{equation*} is a well-defined linear operator. The Pansu pullback commutes with this algebraic differential.
\begin{lemma}\label{lemma: commutativita d0 e pullback}
For every $\alpha \in \Omega^k(G)$, it holds
    \begin{equation*}
        d_0 \circ \varphi_P^* (\alpha) = \varphi_P^* \circ d_0 (\alpha).
    \end{equation*}
\end{lemma}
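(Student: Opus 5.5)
The plan is to reduce the statement to a pointwise identity between left-invariant covectors, and then use that $D\varphi(x)$ is a Lie algebra homomorphism. Both sides are linear in $\alpha$. The operators $d_0$ and $\varphi_P^*$ act on the coefficient function and on the left-invariant covector separately. It therefore suffices to treat $\alpha=f\Theta$ with $f\in\mathcal C^\infty(G)$ and $\Theta\in\bigwedge^k\mathfrak g^*$.

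By Definition \eqref{eq: Pansu pullback Sobolev}, $\varphi_P^*(f\Theta)(x)=(f\circ\varphi)(x)\,D\varphi(x)^*\Theta$. Since $d_0=\operatorname{Id}\otimes d_{\mathfrak g}$ is $\mathcal C^\infty(G)$-linear and acts fibrewise, it extends to $L^1_{\mathrm{loc}}$ and distributional coefficients. Hence
\[
d_0\varphi_P^*(f\Theta)(x)=(f\circ\varphi)(x)\,d_{\mathfrak g}\bigl(D\varphi(x)^*\Theta\bigr)\quad\text{for a.e. }x\,,
\]
with equality as elements of $L^1_{\mathrm{loc}}(G)\otimes\bigwedge^{k+1}\mathfrak g^*$ by Lemma \ref{lemma: coeff del pullback L1}, and therefore also as distributional forms. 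On the other side, $d_0(f\Theta)=f\,d_{\mathfrak g}\Theta$, so
\[
\varphi_P^*d_0(f\Theta)(x)=(f\circ\varphi)(x)\,D\varphi(x)^*(d_{\mathfrak g}\Theta)\,.
\]
The claim thus reduces to the algebraic identity $d_{\mathfrak g}\circ L^*=L^*\circ d_{\mathfrak g}$ on $\bigwedge^\bullet\mathfrak g^*$, applied to $L=D\varphi(x)$ at each point of Pansu differentiability. By Theorem \ref{thm: Vodopyanov Pansu diff}, these points have full measure.

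This identity is the naturality of the Chevalley--Eilenberg differential under Lie algebra homomorphisms, and it follows from formula \eqref{formula esplicita d0}. For $Y_1,\dots,Y_{k+1}\in\mathfrak g$, we have
\[
(L^*d_{\mathfrak g}\Theta)(Y_1,\dots,Y_{k+1})=\sum_{i<l}(-1)^{i+l}\Theta\bigl([LY_i,LY_l],LY_1,\dots,\widehat{LY_i},\dots,\widehat{LY_l},\dots,LY_{k+1}\bigr)\,.
\]
Using $[LY_i,LY_l]=L[Y_i,Y_l]$, this equals $d_{\mathfrak g}(L^*\Theta)(Y_1,\dots,Y_{k+1})$. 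The homomorphism property holds because $D\varphi(x)=\exp^{-1}\circ D_P\varphi(x)\circ\exp$ is the differential of a Lie group homomorphism. In the Engel case it can also be checked directly from the explicit matrix computed earlier: $D\varphi(x)X_3$ and $D\varphi(x)X_4$ are defined through brackets, and the relation $[X_2,X_3]=0$ is preserved because $X_2\varphi_1\det D_h\varphi=0$ a.e. For instance, one could verify $D\varphi(x)^*d\theta_3=d\,D\varphi(x)^*\theta_3$ and $D\varphi(x)^*d\theta_4=d\,D\varphi(x)^*\theta_4$ on generators, and extend by multiplicativity of $L^*$ and the Leibniz rule.

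The only delicate step is making sure the homomorphism property of $D\varphi(x)$ holds on a full-measure set. This includes the vanishing bracket $[X_2,X_3]=0$, which is exactly the a.e. constraint derived in the previous section. Once that is secured, the argument is purely pointwise and algebraic, and the a.e. equality of $L^1_{\mathrm{loc}}$ coefficients gives the equality of currents.
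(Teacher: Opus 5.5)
Your proposal is correct and is essentially the paper's proof: reduce by linearity to $\alpha=f\Theta$, then at each point of Pansu differentiability expand $d\Theta$ via \eqref{formula esplicita d0} and use $[D\varphi(x)Y_i,D\varphi(x)Y_j]=D\varphi(x)[Y_i,Y_j]$. The homomorphism property comes directly from $D_P\varphi(x)$ being a Lie group homomorphism (Theorem \ref{thm: Vodopyanov Pansu diff}), whereas the Engel constraint $X_2\varphi_1\det(D_h\varphi)=0$ was derived in the paper \emph{from} that property, so your Engel-specific check is only a consistency check and not an independent justification.
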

\begin{proof}
    At every point $x\in G$ at which $\varphi$ is Pansu differentiable, the induced Pansu differential
    \begin{equation*}
        D\varphi(x):\mathfrak g\longrightarrow \mathfrak g
    \end{equation*}
    is a Lie algebra homomorphism and therefore preserves Lie brackets.
Let $\alpha \in \Omega^k(G)$ be of the form $\alpha = f \Theta$ with $f \in C^{\infty}(G)$ and $\Theta \in \bigwedge\nolimits^k\mathfrak{g}^\ast$. Then, by the expression \eqref{formula esplicita d0}, for every $Y_1,\ldots,Y_{k+1}\in\mathfrak{g}$,
    \begin{align*}
    &(\varphi_P^\ast d_0\alpha)(Y_1,\ldots,Y_{k+1})=(f\circ\varphi)\,d\Theta\big((D\varphi)Y_1,\ldots,(D\varphi)Y_{k+1}\big)\\&=\sum_{1\le i<j\le k+1}(-1)^{i+j}(f\circ \varphi)\,\Theta\big([(D\varphi )Y_i,(D\varphi)Y_j],(D\varphi)Y_1,\ldots,\widehat{(D\varphi)Y_i},\ldots,\widehat{(D\varphi)Y_j},\ldots,(D\varphi)Y_{k+1}\big)\\&=\sum_{1\le i<j\le k+1}(-1)^{i+j}(f\circ\varphi)\,\Theta\big((D\varphi)[Y_i,Y_j],(D\varphi)Y_1,\ldots,\widehat{(D\varphi)Y_i},\ldots,\widehat{(D\varphi)Y_j},\ldots,(D\varphi)Y_{k+1}\big)\\&=\sum_{1\le i<j\le k+1}(-1)^{i+j}\varphi_P^\ast\alpha([Y_i,Y_j],Y_1,\ldots,\widehat{Y_i},\ldots,\widehat{Y_j},\ldots,Y_{k+1})=(d_0\varphi_P^\ast\alpha)(Y_1,\ldots,Y_{k+1})\,.
\end{align*}
The general case follows by linearity.
\end{proof}

\begin{remark}\label{rmk: Pullback e Rumin}
    A  direct consequence of Lemma \ref{lemma: commutativita d0 e pullback} is that
\begin{align*}
    \alpha \in C^\infty(G) \otimes \ker d_0 &\Longrightarrow \varphi_P^* \alpha \in \mathcal{D}'(G) \otimes \ker d_0\,,\\
    \alpha \in C^\infty(G) \otimes \operatorname{Im} d_0 &\Longrightarrow \varphi_P^* \alpha \in \mathcal{D}'(G) \otimes \operatorname{Im} d_0\,.
\end{align*}
Consequently, the Pansu pullback induces a well-defined map 
\begin{align*}
    E_0^k(G)  &\longrightarrow  \big(E_0^{n-k}(G)\cap\Omega_c^{n-k}(G)\big)'\\ \alpha &\mapsto \Big( \eta \mapsto \int_G \varphi_p^* \alpha \wedge \eta\Big)\,,
\end{align*}
where $E_0^\bullet(G)\cap\Omega_c^\bullet(G)$ denotes the space of compactly supported Rumin forms and $\big(E_0^\bullet(G)\cap\Omega_c^\bullet(G)\big)'$ is its continuous dual, i.e the space of Rumin currents \cite{FSSC6,Can21jga,JuliaPansu2023}.
\end{remark}

From now on, we specialise our considerations to the Engel group. Thus $n=4$ and $Q=7$. We show that an analogous result holds for the associated spectral complexes.

We will use the following ``integration-by-parts" formula for the Pansu pullback due to Kleiner-M\"uller-Xie, see \cite[Theorem 4.2]{kleiner2020pansu}. We state it in our specific setting and using the same convention used in \cite{biundo2026pansupullbackspectralcomplexes} where the weights of covectors are positive.

\begin{theorem}\label{th: KMX 1}
     Let $\varphi : G \rightarrow G$ be a $W_{\text{loc}}^{1,q}$-map for some $q>Q$. Let $\alpha \in \Omega^k(G)$  $\eta \in \Omega_c^{n - k -1}(G)$, where $n$ is the topological dimension of $G$. Assume that
    \begin{equation*}
       \min \left\{ w(\alpha) + w(d\eta), w(d\alpha) + w(\eta) \right\} \geq Q.
    \end{equation*}
    Then the following identity holds:
    \begin{equation*}
        \int_{G} \varphi^*_P d\alpha \wedge \eta + (-1)^{k} \int_{G}\varphi^*_P\alpha \wedge d\eta = 0.
    \end{equation*}
\end{theorem}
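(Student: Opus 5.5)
Since the statement is quoted from Kleiner–Müller–Xie, the plan is to reconstruct their argument in our setting. It has two stages. First prove the identity for smooth contact maps using ordinary pullbacks and Stokes' theorem. Then pass to a $W^{1,q}_{\mathrm{loc}}$ map with $q>Q$ by an approximation in which the weight hypothesis
\[
\min\{w(\alpha)+w(d\eta),\,w(d\alpha)+w(\eta)\}\ge Q
\]
lets only the Pansu part survive. By linearity, as in the definition of $\varphi_P^*$, we may assume $\alpha=f\Theta$ with $\Theta$ left-invariant and homogeneous, and similarly for $\eta$.

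\emph{Step 1 (smooth contact maps).} Let $\psi\colon G\to G$ be smooth and contact, i.e. $d\psi$ preserves the horizontal bundle, hence the whole filtration by weight. Then for a homogeneous covector of weight $w$, the ordinary pullback $\psi^*\Theta$ splits into components of weight $\ge w$. Its weight-$w$ component is exactly $D\psi(x)^*\Theta$, the graded (Pansu) part of $d\psi$. Hence $\psi^*\alpha=\psi_P^*\alpha+(\text{terms of weight}>w(\alpha))$, and likewise for $d\alpha$. Since $w(d\alpha)+w(\eta)\ge Q$ and any wedge of total weight $>Q$ vanishes in $\bigwedge^n\mathfrak g^*$, we get $\int\psi^*d\alpha\wedge\eta=\int\psi_P^*d\alpha\wedge\eta$. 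Using $w(\alpha)+w(d\eta)\ge Q$ in the same way, $\int\psi^*\alpha\wedge d\eta=\int\psi_P^*\alpha\wedge d\eta$. The classical identity $\psi^*d=d\psi^*$ together with Stokes' theorem (recall $\eta$ has compact support) then gives the claim for $\psi$.

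\emph{Step 2 (approximation).} This is the main obstacle: mollification destroys the contact property, so Step 1 does not apply directly to smoothed versions of $\varphi$. I would first try the following.

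1. Take group-convolution mollifications $\varphi_\varepsilon$. These converge to $\varphi$ locally uniformly, using the continuous representative from the Morrey embedding quoted in Lemma~\ref{lemma: coeff del pullback L1}, and their horizontal derivatives converge in $L^q_{\mathrm{loc}}$.
2. Apply the exact Stokes identity to the ordinary pullbacks $\varphi_\varepsilon^*$ and decompose each integrand by weight, as in Step 1.
3. Show the weight-exactly-$Q$ parts converge to the Pansu-pullback integrands. Their coefficients are polynomials of degree $w$ in the horizontal derivatives, composed with $f\circ\varphi_\varepsilon$. Together with the $L^{q/p}$ bounds of Lemma~\ref{lemma: coeff del pullback L1} and $q>Q\ge p$, this gives $L^1_{\mathrm{loc}}$ convergence by Vitali or dominated convergence.
4. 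Show the remaining parts are error terms tending to zero. These come from the non-contact (vertical) components of $d\varphi_\varepsilon$, which pair with $\eta$ only through strictly lower-weight pieces. I would control them by writing the vertical defect of $\varphi_\varepsilon$ as a commutator-type quantity, bounded via $D_h\varphi_\varepsilon-(D_h\varphi)_\varepsilon\to0$ in $L^q_{\mathrm{loc}}$.

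If this defect estimate fails, the fallback is the rescaling route. Set $\varphi_s=\delta_{1/s}\circ\varphi\circ\delta_s$, change variables, and expand in powers of $s$, using that the homogeneous weights of $\alpha,\eta$ produce factors $s^{w}$. The hypothesis $\ge Q$ makes the Pansu term the leading coefficient. Pansu differentiability a.e. (Theorem~\ref{thm: Vodopyanov Pansu diff}), combined with the uniform $L^{q/p}$ domination, should then let us pass to the limit $s\to0$ and isolate the desired identity.
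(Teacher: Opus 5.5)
The paper does not prove this statement; it quotes it from \cite[Theorem~4.2]{kleiner2020pansu}, so your outline has to stand on its own. Step~1 is correct. Reducing Step~2 to the convergence statement
\[
\int_G\varphi_\varepsilon^*\gamma\wedge\omega\longrightarrow\int_G\varphi_P^*\gamma\wedge\omega\qquad\text{whenever } w(\gamma)+w(\omega)\ge Q,
\]
applied to $(\gamma,\omega)=(d\alpha,\eta)$ and $(\alpha,d\eta)$ after exact Stokes for the smooth maps $\varphi_\varepsilon$, is also logically sound. However, your reduction by linearity is legitimate only for this convergence statement, not for the theorem itself. The hypotheses on $w(d\alpha)$ and $w(d\eta)$ are not inherited by the homogeneous pieces of $\alpha$ and $\eta$, because they usually rest on cancellations between pieces of different weight. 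This is exactly how the theorem is used in this paper: $\widehat\eta=\eta-z_2-z_3$ satisfies $w(d\widehat\eta)\ge 4$, although $d\eta$ generally has a non-zero component of weight $2$.

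The genuine gap is in items 3 and 4 of Step~2. For a smooth map that is not contact, the weight-preserving part of $\varphi_\varepsilon^*\Theta$ is in general \emph{not} a polynomial in the horizontal derivatives of $\varphi_\varepsilon$ once $\Theta$ involves $\theta_3$ or $\theta_4$. Set $a_{ji}:=(\varphi_\varepsilon^*\theta_j)(X_i)$ and evaluate $d(\varphi_\varepsilon^*\theta_3)=-\varphi_\varepsilon^*\theta_1\wedge\varphi_\varepsilon^*\theta_2$ on $(X_1,X_2)$; this gives
\[
a_{33}=a_{11}a_{22}-a_{12}a_{21}+X_1a_{32}-X_2a_{31}.
\]
In the same way, the $\theta_4$-coefficient $a_{44}$ of $\varphi_\varepsilon^*\theta_4$ contains $X_1a_{43}$, $X_3a_{41}$ and $a_{13}a_{31}$.

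The contact defects $a_{31},a_{32},a_{41},a_{42},a_{43}$ vanish for smooth contact maps and are small for $\varphi_\varepsilon$. But their derivatives at scale $\varepsilon$ are not small, and $a_{13}$ can be of order $\varepsilon^{-1}$. So $D_h\varphi_\varepsilon\to D_h\varphi$ in $L^q_{\mathrm{loc}}$ gives neither the vanishing of these corrections in the limit nor an $\varepsilon$-uniform integrable bound for them. The $L^{q/p}$ bounds of Lemma~\ref{lemma: coeff del pullback L1} concern $\varphi_P^*$, not $\varphi_\varepsilon^*$, and Vitali's theorem needs an a.e.\ limit you have not identified.

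Item 4 has the same problem. A first-order commutator estimate only gives $a_{4i}=O(\varepsilon)$ for $i=1,2$. That does not make weight-lowering products such as $a_{13}a_{41}$, which occurs in $\varphi_\varepsilon^*(\theta_1\wedge\theta_4)$, small. On top of this, a $G$-valued map can only be averaged in coordinates, and on a step-$3$ target such averaging does not commute with left translations; this produces further non-contact errors.

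Your fallback does not close the gap. The global rescaling $\varphi_s=\delta_{1/s}\circ\varphi\circ\delta_s$ is a symmetry of the identity, since $D\varphi_s(y)=D\varphi(\delta_sy)$. It also only sees Pansu differentiability at the single point $e$. What is missing is a blow-up of the approximants at almost every point $x$ at the same scale $\varepsilon$. Relative to this blow-up, a component of $\varphi_\varepsilon^*\Theta$ of weight $w'$ carries the factor $\varepsilon^{w(\Theta)-w'}$.

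You would then have to show two things. First, the rescaled approximants converge, using Pansu differentiability at $x$ (Theorem~\ref{thm: Vodopyanov Pansu diff}), so that the weight-lowering parts tend to zero and the weight-preserving part tends to $D\varphi(x)^*\Theta$. Second, all coefficients satisfy an $\varepsilon$-uniform bound. The natural candidate comes from Morrey's inequality on balls of radius $\varepsilon$, giving a bound of the form $C\,(M|D_h\varphi|^q)^{w(\Theta)/q}$ with $M$ the maximal function. This lies in $L^1_{\mathrm{loc}}$ precisely because $w(\Theta)\le Q<q$. In both points the non-equivariance of coordinate averaging has to be dealt with. Until this is carried out, the limit passage, which is the whole analytic content of the theorem, remains unproved.
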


We first use Theorem \ref{th: KMX 1} to show that integration against the Pansu pullback is compatible with the quotient structures of the spectral spaces. 

\begin{proposition}\label{prop: well defined Pansu spectral pairing}
Let $G$ be the Engel group, and let $(p,k,r,l)$ be an admissible
quadruple occurring in one of its two spectral complexes. Let $\alpha \in Z_r^{p,k-p}(G)$, $\beta \in B_l^{p,k-p}(G)$  and $\eta \in Z_l^{Q-p,\, n-k-(Q-p)}(G) \cap \Omega_c^{n-k}(G)$. Then 

\begin{equation}\label{eq: source representative well defined} 
\int_G \varphi_P^*(\alpha+\beta)\wedge\eta = \int_G \varphi_P^*\alpha\wedge\eta. 
\end{equation}
Moreover, if $\gamma \in B_r^{Q-p,\, n-k-(Q-p)}(G) \cap \Omega_c^{n-k}(G)$, then \begin{equation}\label{eq: test representative well defined}
\int_G \varphi_P^*\alpha\wedge\gamma = 0.
\end{equation} 
\end{proposition}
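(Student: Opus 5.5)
The plan is to mirror the proof of point \eqref{pairing well defined} of Lemma \ref{lemma: pairing spectral currents}. The role of Stokes' theorem there is played here by the Kleiner--M\"uller--Xie identity of Theorem \ref{th: KMX 1}. Vanishing of wedge products for weight reasons survives the pullback because $\varphi_P^*$ preserves the weight decomposition, by \eqref{eq: Pansu pullback preserves weight} applied componentwise. Throughout I use that on the Engel group the only non-zero covectors of degree $n=4$ have weight exactly $Q=7$. Consequently, any product of an $L^1_{\mathrm{loc}}$ form with a smooth form of complementary degree whose weights add up to at least $Q+1$ vanishes pointwise a.e.

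\emph{Proof of \eqref{eq: source representative well defined}.} By linearity it suffices to show $\int_G\varphi_P^*\beta\wedge\eta=0$.

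1. By Definition \ref{def: Z and B defined}, write $\beta=\sum_{j=0}^{l-1}d_jc_{p-j}$ with the compatibility relations, and set $c:=\sum_{j=0}^{l-1}c_{p-j}$. Then $c$ has degree $k-1$ and weight $w(c)\ge p-l+1$. The relations $0=\sum_{j=m}^{l-1}d_{j-m}c_{p-j}$ for $m\geq 1$ say exactly that every component of $dc-\beta$ of weight below $p+1$ cancels, so $w(dc-\beta)\ge p+1$.

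2. On the test side, $\eta\in Z_l$ with compactly supported witnesses $z_{Q-p+j}$, $1\le j\le l-1$. Set $\eta':=\eta-\sum_j z_{Q-p+j}\in\Omega_c^{n-k}(G)$. The defining equations give $w(\eta')=Q-p$ and $w(d\eta')\ge Q-p+l$.

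3. Check the hypotheses of Theorem \ref{th: KMX 1} for the pair $(c,\eta')$, where $c$ has degree $k-1$ and $\eta'$ has degree $n-(k-1)-1$. We have $w(c)+w(d\eta')\ge Q+1$ and $w(dc)+w(\eta')\ge p+Q-p=Q$. The theorem therefore gives $\int\varphi_P^*dc\wedge\eta'=\pm\int\varphi_P^*c\wedge d\eta'$.

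4. The right-hand side vanishes pointwise by weight, since $w(c)+w(d\eta')\ge Q+1$.

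5. On the left-hand side, split $dc=\beta+(dc-\beta)$ and $\eta'=\eta-\sum_j z_{Q-p+j}$. The terms $\varphi_P^*(dc-\beta)\wedge\eta'$ have weight at least $p+1+Q-p$, and the terms $\varphi_P^*\beta\wedge z_{Q-p+j}$ have weight at least $p+Q-p+j$. Both exceed $Q$ and vanish, leaving $\int\varphi_P^*\beta\wedge\eta=0$.

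\emph{Proof of \eqref{eq: test representative well defined}.} The argument is symmetric.

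1. Let $z_{p+j}$, $1\le j\le r-1$, be the witnesses for $\alpha\in Z_r^{p,k-p}(G)$, and set $\alpha':=\alpha-\sum_j z_{p+j}$. Then $w(\alpha')=p$ and $w(d\alpha')\ge p+r$.

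2. Write $\gamma=\sum_{i=0}^{r-1}d_i\zeta_{Q-p-i}$ with compactly supported $\zeta$'s, and set $\zeta:=\sum_i\zeta_{Q-p-i}\in\Omega_c^{n-k-1}(G)$. Then $w(\zeta)\ge Q-p-r+1$ and $w(d\zeta-\gamma)\ge Q-p+1$, as in step 1 above.

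3. Theorem \ref{th: KMX 1} applies to $(\alpha',\zeta)$, since $w(\alpha')+w(d\zeta)\ge Q$ and $w(d\alpha')+w(\zeta)\ge Q+1$. The term $\int\varphi_P^*d\alpha'\wedge\zeta$ vanishes pointwise by weight, so $\int\varphi_P^*\alpha'\wedge d\zeta=0$.

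4. Expanding, $\varphi_P^*z_{p+j}\wedge\gamma$ and $\varphi_P^*\alpha'\wedge(d\zeta-\gamma)$ both have weight at least $Q+1$ and vanish. This leaves $\int\varphi_P^*\alpha\wedge\gamma=0$.

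The main obstacle I expect is the weight bookkeeping. First, one must verify carefully that the compatibility relations in Definition \ref{def: Z and B defined} give exactly the stated lower bounds on $w(dc-\beta)$ and $w(d\eta')$. Second, one must confirm that Theorem \ref{th: KMX 1}, with the minimal-weight convention of Remark \ref{rmk: weight of non hom form}, applies to non-homogeneous smooth (not necessarily compactly supported) $c$ and $\alpha'$; compactness of the test form $\eta'$ or $\zeta$ suffices for integrability by Lemma \ref{lemma: coeff del pullback L1}. A further point to check is that pointwise vanishing of $\varphi_P^*\omega\wedge\eta$ for $w(\omega)+w(\eta)>Q$ holds a.e. This follows because $D\varphi(x)^*$ preserves each $\bigwedge^{p,q}\mathfrak g^*$.
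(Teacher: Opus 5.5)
Your proof is correct. Its mechanism is the paper's: Theorem \ref{th: KMX 1} replaces Stokes' theorem, and every product of total weight exceeding $Q$ is discarded, which is legitimate after pulling back because $D\varphi(x)^*$ preserves each $\bigwedge^{p,q}\mathfrak g^*$. The difference is in how the argument is organised.

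The paper does not argue uniformly. It first uses the explicit Engel computations to observe that only two pairs need checking for \eqref{eq: source representative well defined}, together with their mirror images for \eqref{eq: test representative well defined}. These are $B_2^{3,-1}(G)\times\bigl(Z_2^{4,-2}(G)\cap\Omega_c^2(G)\bigr)$ and $B_3^{6,-3}(G)\times\bigl(Z_3^{1,0}(G)\cap\Omega_c^1(G)\bigr)$. The first pair is handled purely pointwise, with no integration by parts. The pullback of a multiple of $\theta_1\wedge\theta_3$ is again such a multiple, and $\theta_1\wedge\theta_3\wedge\theta_1\wedge\theta_4=0$. The second pair is exactly your steps 1--5 with $l=3$ and $c=c_4+c_5$. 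There the paper notes that $dc=\beta$ holds exactly, because $\Omega^{\ge 7,3}(G)=\{0\}$.

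Your uniform version buys independence from the case-by-case descriptions of $Z_r$ and $B_r$. It uses only Definition \ref{def: Z and B defined}, weight preservation and Theorem \ref{th: KMX 1}, so it would go through verbatim on any Carnot group where that theorem is available. It also makes clear why the compact-support convention on the test-side witnesses matters: it is exactly what makes $\eta'$ and $\zeta$ admissible compactly supported test forms in Theorem \ref{th: KMX 1}. The paper's route gains concreteness, and it shows that in one of the two cases the integration-by-parts formula is not needed at all.
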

\begin{proof}
We first prove \eqref{eq: source representative well defined}.
By the explicit descriptions of the spaces occurring in the two Engel
spectral complexes, the only nontrivial cases correspond to the pairs
\[
B_2^{3,-1}(G)\times
\bigl(Z_2^{4,-2}(G)\cap\Omega_c^2(G)\bigr)\ \text{ 
and
 }\
B_3^{6,-3}(G)\times
\bigl(Z_3^{1,0}(G)\cap\Omega_c^1(G)\bigr).
\]
It is enough to prove that
\[
\int_G\varphi_P^*\beta\wedge\eta=0\,.
\]

Suppose first that $\beta\in B_2^{3,-1}(G)$ and $\eta\in Z_2^{4,-2}(G)\cap\Omega_c^2(G)$.
Recall that $B_2^{3,-1}(G)
=
\mathcal C^\infty(G)\otimes
\operatorname{span}_{\mathbb R}\{\theta_1\wedge\theta_3\}$, while $Z_2^{4,-2}(G)
=
\mathcal C^\infty(G)\otimes
\operatorname{span}_{\mathbb R}\{\theta_1\wedge\theta_4\}$.
By Remark \ref{rmk: Pullback e Rumin}, $\varphi_P^*\beta
\in
L_{\mathrm{loc}}^1(G)\otimes
\operatorname{span}_{\mathbb R}\{\theta_1\wedge\theta_3\}$ and so
\[
\varphi_P^*\beta\wedge\eta=0,
\]
and the claim follows.

Let us now consider $\beta\in B_3^{6,-3}(G)$ and $\eta\in Z_3^{1,0}(G)\cap\Omega_c^1(G)$.
By the definition of \(B_3^{6,-3}(G)\), there exist $c_4\in\Omega^{4,-2}(G)$ and $c_5\in\Omega^{5,-3}(G)$
such that
\[
d(c_4+c_5)-\beta\in\Omega^{\geq7,3}(G)\,.
\]
Since the largest possible weight of a nonzero \(3\)-form on the Engel
group is \(6\), we have
\(\Omega^{\geq7,3}(G)=\{0\}\). Hence $d(c_4+c_5)=\beta$, with $w(c_4+c_5)\ge 4$ and $w(d(c_4+c_5))\ge 6$.

Since
\(\eta\in Z_3^{1,0}(G)\cap\Omega_c^1(G)\), the compact-support
convention provides forms $z_2\in\Omega_c^{2,-1}(G)$ and $z_3\in\Omega_c^{3,-2}(G)$
such that, setting $\widehat\eta:=\eta-z_2-z_3$, one has $w(\widehat\eta)\geq1$ and $w(d\widehat\eta)\geq4$.
Moreover, \(\widehat\eta\) is compactly supported.

Since the Pansu pullback preserves homogeneous weights and
\(w(\beta)=6\), we have that $\varphi_P^*\beta\wedge z_2
=
\varphi_P^*\beta\wedge z_3
=
0$, since there are no forms of weight greater than 7.
Therefore,
\[
\int_G\varphi_P^*\beta\wedge\eta
=
\int_G\varphi_P^*(d(c_4+c_5))\wedge\widehat\eta\,.
\]
Moreover, the hypotheses of Theorem \ref{th: KMX 1} are satisfied because
\[
w(c_4+c_5)+w(d\widehat\eta)\geq4+4=8\ \text{ and }\ 
w(d(c_4+c_5))+w(\widehat\eta)\geq6+1=7\,.
\]
Since \(c_4+c_5\) has degree \(2\), the same theorem gives
\[
\int_G\varphi_P^*(d(c_4+c_5))\wedge\widehat\eta
=
-\int_G\varphi_P^*{(c_4+c_5)}\wedge d\widehat\eta\,.
\]
The integrand on the right is a \(4\)-form of weight at least
\(4+4=8>Q\), and hence vanishes identically. Thus
\[
\int_G\varphi_P^*\beta\wedge\eta=0\,,
\]
which proves \eqref{eq: source representative well defined}.

Let us now prove \eqref{eq: test representative well defined}. Again,
there are only two nontrivial cases. Suppose first that $\alpha\in Z_2^{4,-2}(G)$ and  $\gamma\in B_2^{3,-1}(G)\cap\Omega_c^2(G)$. Similarly to what was done before, since $Z_2^{4,-2}(G)
=
\mathcal C^\infty(G)\otimes
\operatorname{span}_{\mathbb R}\{\theta_1\wedge\theta_4\}$,
Remark \ref{rmk: Pullback e Rumin} implies that $\varphi_P^*\alpha
\in
L_{\mathrm{loc}}^1(G)\otimes
\operatorname{span}_{\mathbb R}\{\theta_1\wedge\theta_4\}$.
On the other hand, $\gamma\in
\mathcal C_c^\infty(G)\otimes
\operatorname{span}_{\mathbb R}\{\theta_1\wedge\theta_3\}$, and so $\varphi_P^*\alpha\wedge\gamma=0$.

Finally, suppose that $\alpha\in Z_3^{1,0}(G)$ and $\gamma\in B_3^{6,-3}(G)\cap\Omega_c^3(G)$.
Since \(\alpha\in Z_3^{1,0}(G)\), there exist $z_2\in\Omega^{2,-1}(G)$ and $z_3\in\Omega^{3,-2}(G)$
such that, setting $\widehat\alpha:=\alpha-z_2-z_3$,
one has $w(\widehat\alpha)\geq1$ and $w(d\widehat\alpha)\geq4$.

By the compact-support definition of
\(B_3^{6,-3}(G)\cap\Omega_c^3(G)\), there exist compactly supported
forms $c_4\in\Omega_c^{4,-2}(G)$ and $c_5\in\Omega_c^{5,-3}(G)$
such that $d(c_4+c_5)-\gamma\in\Omega_c^{\geq7,3}(G)$.
As before, \(\Omega^{\geq7,3}(G)=\{0\}\), and therefore $d(c_4+c_5)=\gamma$, $w(c_4+c_5)\ge 4$, and $w(d(c_4+c_5))\ge 6$.

Since \(w(\gamma)=6\), the usual weight considerations give $\varphi_P^*z_2\wedge\gamma
=
\varphi_P^*z_3\wedge\gamma
=
0$, and so it follows that
\[
\int_G\varphi_P^*\alpha\wedge\gamma
=
\int_G\varphi_P^*\widehat\alpha\wedge d(c_4+c_5)\,.
\]
The hypotheses of Theorem \ref{th: KMX 1} are again satisfied because
\[
w(\widehat\alpha)+w(d(c_4+c_5))\geq1+6=7\ \text{ and }\
w(d\widehat\alpha)+w(c_4+c_5)\geq4+4=8\,.
\]
Applying the theorem to the \(1\)-form \(\widehat\alpha\) and the
compactly supported \(2\)-form \(c_4+c_5\), we obtain
\[
\int_G\varphi_P^*\widehat\alpha\wedge d(c_4+c_5)
=
\int_G\varphi_P^*(d\widehat\alpha)\wedge (c_4+c_5)\,.
\]
The integrand on the right is a \(4\)-form of weight at least
\(4+4=8>Q\), so it vanishes identically. Consequently,
\[
\int_G\varphi_P^*\alpha\wedge\gamma=0.
\]
This proves \eqref{eq: test representative well defined} and concludes
the proof.
\end{proof}

The proposition immediately yields the following. 

\begin{corollary} \label{cor: buona def pullback come integrale sulle spectral complexes} 
For every $[\alpha]\in E_{r,l}^{p,k-p}(G)$, the functional on $Z_l^{Q-p, n-k-(Q-p)}(G) \cap \Omega_c^{n-k}(G)$ defined by
\begin{align*}
    \eta \longmapsto \int_G \varphi_P^*\alpha\wedge\eta
\end{align*}
depends only on the class $[\alpha]$ and vanishes on
\begin{equation*}
B_r^{Q-p,\, n-k-(Q-p)}(G) \cap \Omega_c^{n-k}(G). 
\end{equation*}
Consequently, it descends to a well-defined linear functional 
\begin{equation}\label{eq: def del pullback come corrente}
\phi_{[\alpha]}: \mathscr D_{l,r}^{Q-p,\, n-k-(Q-p)}(G) \longrightarrow \mathbb R. 
\end{equation} 
\end{corollary}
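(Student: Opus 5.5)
The argument reduces to Proposition \ref{prop: well defined Pansu spectral pairing}, plus two routine checks: that the integral makes sense and that the induced functional is linear. I would proceed in three steps.

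\textbf{Step 1: the integral is defined.} Fix a representative $\alpha\in Z_r^{p,k-p}(G)$ and a test form $\eta\in Z_l^{Q-p,\,n-k-(Q-p)}(G)\cap\Omega_c^{n-k}(G)$. By Lemma \ref{lemma: coeff del pullback L1} and \eqref{eq: Pansu pullback preserves weight}, $\varphi_P^*\alpha$ has $L^1_{\mathrm{loc}}$ coefficients. Since $\eta$ is smooth with compact support, $\varphi_P^*\alpha\wedge\eta$ is an integrable top-degree form, so
\[
\int_G\varphi_P^*\alpha\wedge\eta
\]
is a finite real number. Linearity of $\varphi_P^*$ then shows that this expression is bilinear in $(\alpha,\eta)$.

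\textbf{Step 2: independence of the representative and descent.} Take two representatives $\alpha$ and $\alpha+\beta$ of the same class $[\alpha]\in E_{r,l}^{p,k-p}(G)$, where $\beta\in B_l^{p,k-p}(G)$. Identity \eqref{eq: source representative well defined} gives equality of the two integrals for every admissible test form $\eta$, so the functional depends only on $[\alpha]$. Identity \eqref{eq: test representative well defined} shows that the functional vanishes on $B_r^{Q-p,\,n-k-(Q-p)}(G)\cap\Omega_c^{n-k}(G)$. This is a linear subspace of $Z_l^{Q-p,\,n-k-(Q-p)}(G)\cap\Omega_c^{n-k}(G)$, by Lemma \ref{lem: inclusions B and Z} applied inside the compactly supported complex. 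A linear functional vanishing on a subspace factors uniquely through the quotient, which produces $\phi_{[\alpha]}$ on $\mathscr D_{l,r}^{Q-p,\,n-k-(Q-p)}(G)$.

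\textbf{Step 3: bookkeeping of indices.} The only point needing care is that the index pairing in Proposition \ref{prop: well defined Pansu spectral pairing} matches the corollary. The source quotient is by $B_l$, and this is paired against $Z_l$ test forms. The test quotient is by $B_r$, and this is paired against $Z_r$ sources. This is exactly the configuration stated in that proposition for admissible $(p,k,r,l)$, so the proposition applies directly.

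The main obstacle was already handled in Proposition \ref{prop: well defined Pansu spectral pairing}, namely the $B_3^{6,-3}$ versus $Z_3^{1,0}$ case, which needed Theorem \ref{th: KMX 1}. The corollary itself is essentially formal.
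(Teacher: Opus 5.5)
Your proposal is correct and follows the paper's route. The paper gives no separate argument: it simply notes that the corollary follows immediately from Proposition~\ref{prop: well defined Pansu spectral pairing}, since \eqref{eq: source representative well defined} gives independence of the representative, \eqref{eq: test representative well defined} gives vanishing on $B_r^{Q-p,\,n-k-(Q-p)}(G)\cap\Omega_c^{n-k}(G)$, and the functional then factors through the quotient. Your finiteness check via Lemma~\ref{lemma: coeff del pullback L1} and your index bookkeeping are accurate additions.
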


We next verify that this functional is continuous. 
\begin{lemma} \label{lemma: continuitá pullback come corrente}
For every $[\alpha]\in E_{r,l}^{p,k-p}(G),$ the functional $\phi_{[\alpha]}$ is continuous. In particular, 
\begin{equation*} 
\phi_{[\alpha]} \in \mathscr C_{r,l}^{p,k-p}(G). 
\end{equation*} 
\end{lemma}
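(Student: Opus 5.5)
The plan is to copy the proof of Lemma \ref{lemma: continuity spectral embedding}, replacing the smooth form $\alpha$ by its Pansu pullback $\varphi_P^*\alpha$. Choose a representative $\alpha\in Z_r^{p,k-p}(G)$ of $[\alpha]$ and consider, on $Z_l^{Q-p,\,n-k-(Q-p)}(G)\cap\Omega_c^{n-k}(G)$, the functional
\[
F_\alpha(\eta):=\int_G\varphi_P^*\alpha\wedge\eta\,.
\]
By Corollary \ref{cor: buona def pullback come integrale sulle spectral complexes}, $F_\alpha$ depends only on $[\alpha]$ and vanishes on $B_r^{Q-p,\,n-k-(Q-p)}(G)\cap\Omega_c^{n-k}(G)$. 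By the universal property of the quotient topology, it therefore suffices to prove that $F_\alpha$ is continuous for the LF-type topology on the numerator. By the universal property of the inductive limit, this reduces to continuity on each fixed-support Fr\'echet space $Z_l^{Q-p,\,n-k-(Q-p)}(G;K)$ from Lemma \ref{lem: Z fixed support Frechet}.

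Fix a compact set $K\subset G$ and $\eta\in Z_l^{Q-p,\,n-k-(Q-p)}(G;K)$. By Lemma \ref{lemma: coeff del pullback L1}, the coefficients of $\varphi_P^*\alpha$ with respect to the left-invariant basis lie in $L^1_{\mathrm{loc}}(G)$. For $k=0$ they are even in $L^\infty_{\mathrm{loc}}$, and in the admissible cases here $\alpha$ is homogeneous of weight $p$, so the coefficients lie in $L^{q/p}_{\mathrm{loc}}\subset L^1_{\mathrm{loc}}$. Writing $\varphi_P^*\alpha=\sum_I a_I\theta_I$ and $\eta=\sum_J \eta_J\theta_J$, the integrand $\varphi_P^*\alpha\wedge\eta$ is $\sum \pm a_I\eta_J\operatorname{vol}$ over complementary multi-indices. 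This gives
\[
|F_\alpha(\eta)|\le \sum_{I}\|a_I\|_{L^1(K)}\,\sup_{K}|\eta|\le C_{\alpha,\varphi,K}\,\mathfrak p_{K,0}(\eta),
\]
where $C_{\alpha,\varphi,K}$ depends on the $L^1(K)$ norms of the coefficients of $\varphi_P^*\alpha$ and on constants from the finite-dimensional norms on $\bigwedge^\bullet\mathfrak g^\ast$. Hence $F_\alpha$ is continuous on each fixed-support space, and therefore on the inductive limit.

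The step needing the most care, though it is mild, is the measurability and integrability bound. It rests on the Pansu differentiability almost everywhere given by Theorem \ref{thm: Vodopyanov Pansu diff}, and on the bound $|D\varphi(x)^*\Theta|\le C|D_h\varphi(x)|^p$ together with the continuity of $\varphi$, which makes $f\circ\varphi$ bounded on $K$. Both are already established in the proof of Lemma \ref{lemma: coeff del pullback L1}. Note also that only the zero-order seminorm $\mathfrak p_{K,0}$ is needed, so the distributional nature of $\varphi_P^*\alpha$ plays no role. Combining these steps, $\phi_{[\alpha]}$ is a continuous linear functional on $\mathscr D_{l,r}^{Q-p,\,n-k-(Q-p)}(G)$, that is, $\phi_{[\alpha]}\in\mathscr C_{r,l}^{p,k-p}(G)$ by Definition \ref{def: currents}.
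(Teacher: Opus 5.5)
Your proposal is correct and follows essentially the same route as the paper: you bound $|F_\alpha(\eta)|\le C\,\|\varphi_P^*\alpha\|_{L^1(K)}\,\mathfrak p_{K,0}(\eta)$ on each fixed-support space, then apply the inductive-limit and quotient universal properties together with Corollary~\ref{cor: buona def pullback come integrale sulle spectral complexes}. One small refinement, which the paper also passes over: the numerator carries the subspace topology from $\Omega_c^{n-k}(G)$, so it is cleaner to observe that your bound holds for every $\eta\in\Omega_K^{n-k}(G)$; hence $F_\alpha$ is the restriction of a continuous functional on the whole LF space $\Omega_c^{n-k}(G)$.
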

\begin{proof} 
Let $K\subset G$ be compact and let $\eta \in Z_l^{Q-p,\, n-k-(Q-p)}(G;K)$.
Since 
\begin{equation*}
\varphi_P^*\alpha \in L^1_{\mathrm{loc}} \left( G,\bigwedge\nolimits^k\mathfrak g^* \right), 
\end{equation*}
there exists a constant $C>0$, depending only on the fixed left-invariant norm on forms, such that 
\begin{equation*}
\left| \int_G \varphi_P^*\alpha\wedge\eta \right| \leq C \left\| \varphi_P^*\alpha \right\|_{L^1(K)} \mathfrak p_{K,0}(\eta).
\end{equation*} 
Thus the functional is continuous on every Fr\'echet space $Z_l^{Q-p,\, n-k-(Q-p)}(G;K)$.
By the universal property of the inductive-limit topology, it is continuous on $Z_l^{Q-p,\, n-k-(Q-p)}(G) \cap \Omega_c^{n-k}(G)$
endowed with its LF topology.
By Corollary \ref{cor: buona def pullback come integrale sulle spectral complexes},
the functional vanishes on $B_r^{Q-p,\, n-k-(Q-p)}(G) \cap \Omega_c^{n-k}(G)$. Therefore, by the universal property of the quotient topology, it descends to a continuous linear functional on $\mathscr D_{l,r}^{Q-p,\, n-k-(Q-p)}(G)$.
\end{proof}

We can now state the main result of this section.

\begin{theorem} \label{thm: pullback has a map of complexes}
Let $G$ be the Engel group and let $\varphi:G\longrightarrow G$ belong to $W_{\mathrm{loc}}^{1,q}(G,G)$ with $q>7$. For every admissible quadruple $(p,k,r,l)$, define 
\begin{equation*} 
\phi_{r,l}^{p,k-p}: E_{r,l}^{p,k-p}(G) \longrightarrow \mathscr C_{r,l}^{p,k-p}(G) 
\end{equation*}
by 
\begin{equation*} 
\phi_{r,l}^{p,k-p}([\alpha]) := \phi_{[\alpha]}, 
\end{equation*}
where $\phi_{[\alpha]}$ is the current defined in \eqref{eq: def del pullback come corrente}. The collection of maps $\{\phi_{r,l}^{p,k-p}\}$ defines a morphism from each spectral complex associated with the Engel group to the corresponding complex of spectral currents. More precisely, whenever the indices are admissible,
\begin{equation}\label{eq: pullback morphism spectral currents} 
\phi_{r,l}^{p,k-p} \circ \Delta_l = \partial_l \circ \phi_{l,m}^{p-l,\, k-1-(p-l)}.
\end{equation} 
\end{theorem}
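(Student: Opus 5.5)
Well-definedness and continuity of each $\phi_{r,l}^{p,k-p}$ are already given by Corollary \ref{cor: buona def pullback come integrale sulle spectral complexes} and Lemma \ref{lemma: continuitá pullback come corrente}, and linearity is immediate. It therefore remains to verify the intertwining identity \eqref{eq: pullback morphism spectral currents}. Unwinding the definition \eqref{eq: pairing duali} of $\partial_l$, this amounts to showing that for every $[\nu]\in E_{l,m}^{p-l,k-1-p+l}(G)$ and every test class $[\eta]\in\mathscr D_{l,r}^{Q-p,n-k-(Q-p)}(G)$,
\begin{equation*}
\int_G\varphi_P^*(\Delta_l\nu)\wedge\eta=(-1)^{k}\int_G\varphi_P^*\nu\wedge\Delta_l\eta\,,
\end{equation*}
with the sign fixed to match the convention in \eqref{eq: pairing duali}. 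This is the Pansu-pullback analogue of point \eqref{pairing Stokes} of Lemma \ref{lemma: pairing spectral currents}. I would prove it by the same scheme, with Stokes' theorem replaced by Theorem \ref{th: KMX 1}.

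Concretely, I fix representatives $\nu$ and $\eta$ and, by Definition \ref{def: Z and B defined} and Proposition \ref{prop: differentials Delta_r}, choose auxiliary forms. For $\nu$ I take smooth $z$'s with $\widehat\nu:=\nu-\sum_j z_{p-l+j}$. For $\eta$ I take compactly supported $\zeta$'s, as required by the compact-support convention, with $\widehat\eta:=\eta-\sum_j\zeta_{Q-p+j}$. These are chosen so that $\Delta_l\nu$ is represented by $d\widehat\nu$ up to terms of weight $>p$, and $\Delta_l\eta$ by $d\widehat\eta$ up to terms of weight $>Q-p+l$.

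The next step is to replace $\Delta_l\nu$ by $d\widehat\nu$ and $\eta$ by $\widehat\eta$ inside the pulled-back integrals. This uses that the Pansu pullback preserves weight by \eqref{eq: Pansu pullback preserves weight}, and that any $4$-form of weight $>Q=7$ vanishes, so the correction terms wedge to zero exactly as in Lemma \ref{lemma: pairing spectral currents}. One obtains
\begin{equation*}
\int_G\varphi_P^*(\Delta_l\nu)\wedge\eta=\int_G\varphi_P^*(d\widehat\nu)\wedge\widehat\eta
\quad\text{and}\quad
\int_G\varphi_P^*\nu\wedge\Delta_l\eta=\int_G\varphi_P^*\widehat\nu\wedge d\widehat\eta\,.
\end{equation*}
Here one also needs $\varphi_P^*$ to be additive and weight-graded on the smooth correction terms. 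This holds pointwise almost everywhere, and Lemma \ref{lemma: coeff del pullback L1} guarantees integrability.

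Finally, I apply Theorem \ref{th: KMX 1} to the smooth form $\widehat\nu$ and the compactly supported form $\widehat\eta$. Its hypotheses are
\begin{equation*}
w(\widehat\nu)+w(d\widehat\eta)\ge (p-l)+(Q-p+l)=Q
\quad\text{and}\quad
w(d\widehat\nu)+w(\widehat\eta)\ge p+(Q-p)=Q\,,
\end{equation*}
which follow from the weight bounds built into the witnesses. The theorem then gives the desired identity with the correct sign.

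The main obstacle is the bookkeeping of weights for the witnesses in each admissible case, since the Stokes step needs those weight bounds exactly. The cleanest route is to run through the finitely many arrows of the two Engel complexes explicitly, as in the proof of Proposition \ref{prop: well defined Pansu spectral pairing}. For $l=1$ no witnesses occur, and the identity is Theorem \ref{th: KMX 1} directly, with weights $p-1+(Q-p+1)=Q$. For $l=2,3$ one uses the explicit descriptions of $Z_r$ and $B_r$, including Lemma \ref{lemma: Z3 a supporto compatto in deg 1} for compactly supported $Z_3^{1,0}$. In particular, in the case $\Delta_3\colon Z_3^{1,0}\to E_{2,3}^{4,-2}$ the test-side witnesses must be compactly supported and of weights $2,3$. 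This is exactly what the compact-support convention provides.
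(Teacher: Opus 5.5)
Your proposal is correct and follows the paper's proof essentially step for step: reduce to $\int_G\varphi_P^*(\Delta_l\nu)\wedge\eta=(-1)^k\int_G\varphi_P^*\nu\wedge\Delta_l\eta$, replace the representatives by $\widehat\nu$ and the compactly supported $\widehat\eta$ using weight vanishing, and apply Theorem~\ref{th: KMX 1}; since you verify its weight hypotheses uniformly, the case-by-case run through the arrows (and Lemma~\ref{lemma: Z3 a supporto compatto in deg 1}) is not actually needed. One small slip in that closing aside: for $\Delta_3\colon E_{3,1}^{1,0}\to E_{2,3}^{4,-2}$ the test forms lie in $\mathscr D_{3,2}^{3,-1}$ and their witnesses have weights $4,5$, whereas the compactly supported weight-$2,3$ witnesses (for $\eta\in Z_3^{1,0}\cap\Omega_c^1(G)$) arise for the arrow $\Delta_3\colon E_{3,2}^{3,-1}\to E_{1,3}^{6,-3}$.
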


\begin{remark}
    More explicitly, we have the two following morphisms

\[\begin{tikzcd}
	{Z_1^{0,0}(G)} & {E_{3,1}^{1,0}(G)} & {E_{2,3}^{4,-2}(G)} & {E_{1,2}^{6,-3}(G)} & {\Omega^{7,-3}(G)} \\
	{D'(G)} & {\mathscr{C}_{3,1}^{1,0}(G)} & {\mathscr{C}_{2,3}^{4,-2}(G)} & {\mathscr{C}_{1,2}^{6,-3}(G)} & {D'(G) \otimes \bigwedge^4\mathfrak{g}^\ast}
	\arrow["{\Delta_1}", from=1-1, to=1-2]
	\arrow["{\varphi_P^*}", from=1-1, to=2-1]
	\arrow["{\Delta_3}", from=1-2, to=1-3]
	\arrow["{\phi_{3,1}^{1,0}}", from=1-2, to=2-2]
	\arrow["{\Delta_2}", from=1-3, to=1-4]
	\arrow["{\phi_{2,3}^{4,-2}}", from=1-3, to=2-3]
	\arrow["{\Delta_1}", from=1-4, to=1-5]
	\arrow["{\phi_{1,2}^{6,-3}}", from=1-4, to=2-4]
	\arrow["{\varphi_P^*}", from=1-5, to=2-5]
	\arrow["{\partial_1}"', from=2-1, to=2-2]
	\arrow["{\partial_3}"', from=2-2, to=2-3]
	\arrow["{\partial_2}"', from=2-3, to=2-4]
	\arrow["{\partial_1}"', from=2-4, to=2-5]
\end{tikzcd}\]

\[\begin{tikzcd}
	{Z_1^{0,0}(G)} & {E_{2,1}^{1,0}(G)} & {E_{3,2}^{3,-1}(G)} & {E_{1,3}^{6,-3}(G)} & {\Omega^{7,-3}(G)} \\
	{D'(G)} & {\mathscr{C}_{2,1}^{1,0}(G)} & {\mathscr{C}_{3,2}^{3,-1}(G)} & {\mathscr{C}_{1,3}^{6,-3}(G)} & {D'(G) \otimes \bigwedge^4\mathfrak{g}^\ast}
	\arrow["{\Delta_1}", from=1-1, to=1-2]
	\arrow["{\varphi_P^*}", from=1-1, to=2-1]
	\arrow["{\Delta_2}", from=1-2, to=1-3]
	\arrow["{\phi_{2,1}^{1,0}}", from=1-2, to=2-2]
	\arrow["{\Delta_3}", from=1-3, to=1-4]
	\arrow["{\phi_{3,2}^{3,-1}}", from=1-3, to=2-3]
	\arrow["{\Delta_1}", from=1-4, to=1-5]
	\arrow["{\phi_{1,3}^{6,-3}}", from=1-4, to=2-4]
	\arrow["{\varphi_P^*}", from=1-5, to=2-5]
	\arrow["{\partial_1}"', from=2-1, to=2-2]
	\arrow["{\partial_2}"', from=2-2, to=2-3]
	\arrow["{\partial_3}"', from=2-3, to=2-4]
	\arrow["{\partial_1}"', from=2-4, to=2-5]
\end{tikzcd}\]

\end{remark}

\begin{proof}[Proof of Theorem \ref{thm: pullback has a map of complexes}]
Corollary \ref{cor: buona def pullback come integrale sulle spectral complexes} and Lemma \ref{lemma: continuitá pullback come corrente} show that every map $\phi_{r,l}^{p,k-p}$ is well-defined. 
It remains to prove compatibility with the differentials. Let  $[\alpha] \in E_{l,m}^{p-l,\, k-1-(p-l)}(G)$  and let $[\eta] \in \mathscr D_{l,r}^{Q-p,\, n-k-(Q-p)}(G)$. We have to show that 
\begin{equation*}
\left\langle \phi_{r,l}^{p,k-p} \bigl(\Delta_l[\alpha]\bigr), [\eta] \right\rangle  = \left\langle \partial_l \left( \phi_{l,m}^{p-l,\, k-1-(p-l)}([\alpha]) \right), [\eta] \right\rangle.  \end{equation*}
By definition, the left-hand side is \begin{equation*}
\int_G \varphi_P^* \bigl(\Delta_l[\alpha]\bigr) \wedge \eta. \end{equation*} 
On the other hand, since $\phi_{l,m}^{p-l,k-1-(p-l)}([\alpha])$ is a current of total degree $k-1$, the definition of the boundary operator gives 
\begin{equation*}
\left\langle \partial_l \left( \phi_{l,m}^{p-l,\, k-1-(p-l)}([\alpha]) \right), [\eta] \right\rangle = (-1)^k \int_G \varphi_P^*\alpha \wedge \Delta_l[\eta]. 
\end{equation*}
Therefore, it is enough to prove \begin{equation}\label{eq: integrali e pullback} 
\int_G \varphi_P^* \bigl(\Delta_l[\alpha]\bigr) \wedge \eta = (-1)^k \int_G \varphi_P^*\alpha \wedge \Delta_l[\eta]. 
\end{equation}
Choose a representative $\alpha \in [\alpha]$. By Proposition \ref{prop: differentials Delta_r}, there exist forms  $ z_{p-l+j}\in \Omega^{p-l+j, k-1 -p+l-j}(G)$ for $1\le j\le l-1$ such that
    \begin{align*}
        \int_G \varphi_P^*\Delta_l([\alpha]) \wedge \eta =  \int_G \varphi_P^*d \big(\alpha-\sum_{j=1}^{l-1}z_{p-j}\big) \wedge \eta
    \end{align*}
    and $\zeta_{Q-p+j}\in \Omega^{Q-p+j, n-k -Q+p-j}(G)$ for $1\le j\le l-1$ such that
    \begin{align*}
        \int_G \varphi_P^*\alpha \wedge \Delta_l([\eta]) = \int_G \varphi_P^*\alpha \wedge d\big(\eta - \sum_{j=1}^{l-1}\zeta_{Q-p+l-j}\big) .
    \end{align*} 
    Since the Pansu pullback preserves homogeneous weight components, all the terms of total weight larger than $Q$ vanish. Consequently, \begin{align*}
       \int_G \varphi_P^*\Delta_l([\alpha]) \wedge \eta &= \int_G \varphi_P^*d \big(\alpha-\sum_{j=1}^{l-1}z_{p-j}\big) \wedge \big(\eta - \sum_{j=1}^{l-1}\zeta_{Q-p+l-j}\big)\,,\\
       \int_G \varphi_P^*\alpha \wedge \Delta_l([\eta])  &=\int_G \varphi_P^*\big(\alpha-\sum_{j=1}^{l-1}z_{p-j}\big) \wedge d\big(\eta - \sum_{j=1}^{l-1}\zeta_{Q-p+l-j}\big)\,.
    \end{align*}
    Finally, the conclusion follows by applying Theorem \ref{th: KMX 1} to the forms $\alpha-\sum_{j=1}^{l-1}z_{p-j}$ and $\eta - \sum_{j=1}^{l-1}\zeta_{Q-p+l-j}$.
\end{proof}

\bibliographystyle{amsplain}
\bibliography{bibliography}
\end{document}